\newcommand{\RR}[0]{\mathbb{R}}
\newcommand{\HH}[0]{\mathbb{H}}
\newcommand{\Z}{\mathbb{Z}}
\newcommand{\A}[0]{\mathcal{A}}
\newcommand{\PP}[0]{\mathcal{P}}
\newcommand{\R}{\mathbb{R}}
\newcommand{\M}{\mathring{M}}
\newcommand{\sing}{\mathrm{sing}}
\newcommand{\diam}{\mathrm{diam}}
\newcommand\vmax{\vee}
\newcommand\vmin{\wedge}
\newcommand{\CH}{{\operatorname{CH}}}
\newcommand{\side}[1]{\mathcal{D}_{#1}}
\numberwithin{equation}{section}
\newtheorem{theorem}{Theorem}[section]
\newtheorem{lemma}[theorem]{Lemma}
\newtheorem{proposition}[theorem]{Proposition}
\newtheorem{corollary}[theorem]{Corollary}
\theoremstyle{definition}
\newtheorem{remark}[theorem]{Remark}
\newtheorem{claim2}{Claim}
\newcommand{\FF}{\bf F}
\newcommand{\cF}{\mathring{F}}
\newcommand{\N}{\mathcal{N}}
\newcommand{\define}[1]{\textbf{#1}}
\newcommand{\union}{\cup}
\newcommand{\intersect}{\cap}
\newcommand{\boundary}{\partial}
\newcommand\tsim{\kern-.4em\sim}
\newcommand\til{\widetilde}
\newcommand\ep{\epsilon}
\newcommand\thull{{\mathbf t}}
\newcommand\ssm{\smallsetminus}
\newcommand{\cX}{\mathring{X}}
\newcommand{\cM}{\mathring{M}}
\newcommand{\cS}{\mathring{S}}
\renewcommand{\int}{\mathrm{int}}
\newcommand{\cl}{\mathrm{cl}}
\newcommand{\mc}{\mathcal}
\newcommand{\wt}{\widetilde}
\long\def\realfig#1#2{
  \begin{figure}[htbp]
    \begin{center}
\graphicspath{ {./onelipschitz/} }
\includegraphics {#1}
\caption[#1]{#2}
\label{#1}
    \end{center}
\end{figure}}
\begin{document}
\title[Projections and fibrations]{Subsurface distances for hyperbolic $3$--manifolds fibering over the circle}

\author[Y. N. Minsky]{Yair N. Minsky}
\address{Department of Mathematics\\ 
Yale University}
\email{\href{mailto:yair.minsky@yale.edu}{yair.minsky@yale.edu}}
\author[S.J. Taylor]{Samuel J. Taylor}
\address{Department of Mathematics\\ 
Temple University}
\email{\href{mailto:samuel.taylor@temple.edu}{samuel.taylor@temple.edu}}
\date{\today}

\begin{abstract}
For a hyperbolic fibered $3$--manifold $M$, we prove results that uniformly relate the structure of surface projections as one varies the fibrations of $M$.
This extends our previous work from the fully-punctured to the general case.
\end{abstract}

\maketitle

\setcounter{tocdepth}{1}
\tableofcontents


\section{Introduction}
Let $M$ be a hyperbolic $3$-manifold and let
$S$ be a fiber in a fibration of $M$ over the circle.
The corresponding monodromy is a pseudo-Anosov homeomorphism $\phi \colon S \to S$ and comes equipped with invariant  stable and unstable laminations $\lambda^\pm$ on $S$. Let $N_S \to M$ be the infinite cyclic covering of $M$ corresponding to $S$.

As a consequence of the proof of Thurston's Ending Lamination Conjecture, Minsky \cite{ECL1} and Brock--Canary--Minsky \cite{ELC2} develop combinatorial tools to study the geometry of a hyperbolic manifold homeomorphic to $S \times \RR$. 
Applying their work to the special case of $N_S \cong S \times \RR$
explains how the geometry of $N_S$, and hence that of $M$, is \emph{coarsely} determined by combinatorial data associated to the pair of laminations $\lambda^\pm$. In particular, using only the pair $\lambda^\pm$, a combinatorial model of $N_S$ (called the \emph{model manifold}) is constructed and this model is shown to be biLipschitz to $N_S$, where the biLipschitz constant depends only on the complexity of the surface $S$. For this, one of the main combinatorial tools are the Masur--Minsky subsurface projections \cite{MM2}, which associate to each subsurface $Y \subset S$ a \emph{subsurface projection distance} $d_Y(\lambda^-,\lambda^+)$ measuring the complexity of $\lambda^\pm$ as seen from $Y$. In fact, subsurface projections have proven to be useful in several settings \cite{Rafi,BKMM, masur2013geometry} and have been generalized in many directions \cite{BBFquasi, bestvina2014subfactor, behrstock2017hierarchically, sisto2019largest}.

These developments suggest the following outline for studying the geometry of a hyperbolic fibered $3$-manifold $M$: apply the model manifold machinery to an infinite cyclic cover of $M$ associated to a fiber and use this to make conclusions about the structure of $M$. In fact, because of Agol's resolution of the Virtual Fibering Conjecture \cite{AgolVHC}, this simple idea generalizes to any hyperbolic manifold by first passing to a finite sheeted cover which fibers over the circle. 

Unfortunately, this approach is too na\"ive for a number of reasons, perhaps the most important of which is that the complexity of a fiber in the appropriate cover is \emph{not} known at the outset.
Indeed, even when a fibered manifold $M$ is fixed, if $\mathrm{dim}(H^1(M; \RR)) \ge 2$ then $M$ fibers in infinitely many ways, and the complexities of the corresponding fibers are necessarily unbounded. Since the bilipschitz constants in the Model Manifold Theorem depend on the complexity of the underlying surface, 
this approach goes nowhere without a precise understanding of how
the constants relating geometry to combinatorics vary as the surface changes.

To salvage this approach, one would like control over how the tools at the center of the construction depend on complexity. The purpose of this paper is to give such uniform, explicit control on subsurface projection distances as one varies the fibers within a fixed fibered manifold. This extends our previous work \cite{veering1} that handled the special case of \emph{fully-punctured} fibered manifolds (see below for details).


\subsection*{Main results}
Recall that the fibrations of a manifold $M$ are organized into finitely many ``fibered
faces'' of the unit ball in $H^1(M, \RR)$ of the Thurston norm \cite{thurston1986norm}, where
each fibered face $\FF$ has the property that all primitive integral classes in the open cone $\RR_+\FF$ represent a fiber (see \Cref{sec:tnorm}).
Associated to each  fibered face  is a pseudo-Anosov flow which is
transverse to every fiber represented in $\RR_+\FF$ \cite{fried1982geometry}.

Our first main result bounds the size and projection distance for all subsurfaces of all
fibers over a fixed fibered face $\FF$. The constant $D$ in the statement is no more than $15$ (see \Cref{distance and intersection}) and $|\chi'(Y)| = \max\{|\chi(Y)|, 1\}$.

\begin{theorem}[Bounding projections for $M$] \label{th:intro_1}
Let $M$ be a hyperbolic fibered $3$-manifold with fibered face $\FF$. Then for any fiber $S$ contained in $\R_+  \FF$ and any subsurface $Y$ of $S$
\[
 |\chi'(Y)| \cdot \big (d_Y(\lambda^- ,\lambda^+) -16D \big ) \le 2D \: \vert \FF \vert,
\]
where $|\FF|$ is a constant depending only on $\FF$.
\end{theorem}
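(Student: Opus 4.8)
The plan is to exploit the pseudo-Anosov flow $\psi$ associated to the fibered face $\FF$, which is transverse to every fiber $S$ represented in $\R_+\FF$. The key observation is that the stable and unstable laminations $\lambda^\pm$ of the monodromy $\phi$ on $S$ are carried by the intersection of $S$ with the (weak) stable/unstable foliations of the flow $\psi$. Since the flow $\psi$ depends only on $\FF$, not on the particular fiber $S$, any subsurface $Y \subset S$ in which $d_Y(\lambda^-,\lambda^+)$ is large forces the flow to ``wind'' a definite amount inside $Y$; and the total amount of winding the flow can do is controlled by a quantity $|\FF|$ intrinsic to the face. More precisely, I would fix a ``template'' or ``branched surface'' structure for the flow near a fiber in $\R_+\FF$ and measure complexity of $\lambda^\pm \cap Y$ against it.

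**The main estimate via intersection numbers.** The heart of the argument should be an inequality of the form: if $d_Y(\lambda^-,\lambda^+) \geq 16D$ then $\lambda^\pm$ must have a subarc in $Y$ that, when closed up, is isotopic to a curve or arc which the flow $\psi$ links with high multiplicity. I would first invoke the Masur–Minsky relation between subsurface projection distance and intersection number (the statement labeled \emph{distance and intersection}, giving the constant $D \leq 15$): if $\alpha,\beta$ are curves (or the laminations) with $d_Y(\alpha,\beta)$ large, then for \emph{any} curve $\gamma$ meeting $Y$ essentially, $i(\gamma,\alpha)\cdot i(\gamma,\beta)$ grows at least linearly in $|\chi(Y)|\cdot(d_Y(\alpha,\beta)-C)$. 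Applying this with $\gamma$ running over the core curves of the flow's transverse structure restricted to $S$, and summing over the (boundedly many, in terms of $|\FF|$) such curves, converts the left-hand side $|\chi'(Y)|\cdot(d_Y(\lambda^-,\lambda^+)-16D)$ into a bound by the total geometric intersection of $\lambda^\pm$ with the flow template, which is precisely what $|\FF|$ is defined to control.

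**Reduction to a single fiber; uniformity.** The subtle point is uniformity across all fibers $S\in\R_+\FF$ simultaneously. Here I would use the fact that the infinite cyclic cover $N_S$ and the flow suspend to give a single object over $\FF$: there is a well-defined ``universal'' picture — a flow on $M$ (or on an appropriate cover) together with a finite branched-surface carrying all the stable/unstable laminations of all fibers over $\FF$. The complexity $|\FF|$ should be defined as, say, the number of branch sectors (or the weight-complexity) of this branched surface, or equivalently the intersection number of $\lambda^\pm$ with a fixed transverse section — a quantity that does not see which fiber we chose because all fibers over $\FF$ are homologous-up-to-scaling and the flow is the same. One then checks that for a given $S$, every subsurface $Y\subset S$ can be ``straightened'' with respect to this branched surface, and the winding of $\lambda^\pm$ inside $Y$ is bounded by the part of the branched surface lying over $Y$, which is at most $2D|\FF|$ after accounting for the Euler characteristic normalization $|\chi'(Y)|$.

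**Expected main obstacle.** The hard part will be making the passage from ``$d_Y$ is large'' to ``there is a definite amount of flow-winding visible in $Y$'' fully quantitative and uniform — i.e., producing the right test curve(s) $\gamma$ transverse to the flow whose intersections with $\lambda^\pm$ detect the subsurface projection, and controlling how many such curves are needed in terms of $|\FF|$ alone. In the fully-punctured case treated in \cite{veering1}, the veering triangulation provides a canonical combinatorial model that makes this transparent; in the general (non-fully-punctured) case there is no such clean triangulation, so I expect the real work is in building a suitable substitute — perhaps by drilling the singular orbits of the flow, applying the fully-punctured result there, and then carefully controlling the error introduced by filling back in, showing it contributes only to the additive $16D$ term and the multiplicative $2D$ constant rather than to $|\FF|$.
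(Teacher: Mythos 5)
Your proposal correctly guesses the broad strategy (drill the singular orbits, use the canonical combinatorics of the fully-punctured manifold associated to $\FF$, and control the passage back to $M$), but the quantitative mechanism you propose does not work, and the central difficulty of the general case is left untouched. First, the counting via intersection numbers with a ``flow template'' or test curves $\gamma$ is not viable as stated: $\lambda^\pm$ are minimal filling laminations, so their geometric intersection with any fixed essential curve or finite transverse object is infinite, and ``the total geometric intersection of $\lambda^\pm$ with the flow template'' is not a finite quantity that $|\FF|$ could control. (Also, the distance--intersection relation is logarithmic and is used in the paper in the opposite direction: bounded intersection forces bounded $d_Y$, as in \Cref{lem:effective_B}; it does not give a linear lower bound of the form you write.) The finite combinatorial substitute is exactly what the paper builds: $|\FF|$ is the number of tetrahedra of the veering triangulation $\tau$ of $\cM$, the laminations are replaced by the top and bottom sections of the pocket $T(\partial_\tau Y)$, whose $\pi_Y^\tau$-projections are within $D+1$ of $\pi_Y(\lambda^\pm)$ (\Cref{prop:closed_distance}), and the left-hand side $|\chi'(Y)|\,(d_Y(\lambda^-,\lambda^+)-16D)$ is converted into a count of tetrahedron moves via \Cref{lem:slowed_progress}: each $2D$ of progress in $\A(Y)$ costs at least $|\chi(Y)|$ flips. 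None of this is derivable from intersection numbers of $\lambda^\pm$ with test curves.

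Second, and more importantly, even granting such a combinatorial pocket $U_Y$ over $Y$ in the infinite cyclic cover $\N$, there is no a priori reason its size is bounded by $|\FF|$: that requires the pocket to embed in $\cM$, i.e.\ to be disjoint from its $\Phi^k$-translates, and in the non-fully-punctured case intersections with translates can be large but homotopically inessential and hard to locate --- this is stated in the paper as the main difficulty. The bulk of \Cref{sec:uniform_bounds} is devoted to this: one chooses a $\phi$-section $T_0$ from Agol's sweep-out (\Cref{lem:AG_sec}) at controlled height ($3D\le d_Y(T_0,\lambda^+)\le 5D$), uses the order on the translates $\phi^n(Y)$ (\Cref{lem:order}) together with \Cref{prop:proj_close} to place $\Phi^N(T_0)$ near $\lambda^-$, and proves that the projection $T\mapsto T^Y$ moves projections by at most $4D$ (\Cref{prop:stays_close}), yielding the isolated pocket $V_Y$ which embeds (\Cref{cor:embed}) and still has depth at least $d_Y(\lambda^-,\lambda^+)-16D$. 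Your proposal defers all of this to ``the real work,'' so as written it contains the correct heuristic but not a proof: the two missing ingredients are the uniform flip-counting lemma replacing intersection numbers, and the isolation/embedding argument that makes the count comparable to $|\FF|$.
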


In particular, subsurface projections are uniformly bounded over the fibered face $ \FF$ as are the complexities of subsurfaces whose projection distances are greater than $16 D$. Note that since $M$ has only finitely many fibered faces, this bounds the size of all subsurface projections among all fibers of $M$.

\medskip

Second, we relate subsurfaces of different fibers in the same fibered face of $M$. Note
that here the constants involved do not depend on the manifold $M$. 

\begin{theorem}[Subsurface dichotomy] \label{th:intro_2}
Let $M$ be a hyperbolic fibered $3$-manifold and let $S$ and $F$ be fibers of $M$ which are contained in the same fibered cone. If $W \subset F$ is a subsurface of $F$ then either $W$ is 
homotopic, through surfaces transverse to the associated flow,
to an \emph{embedded} subsurface $W'$ of $S$ with 
$$d_{W'} (\lambda^-,\lambda^+) = d_W(\lambda^-,\lambda^+) $$
or the fiber $S$ satisfies
$$9D \cdot |\chi(S)| \ge  d_W(\lambda^-,\lambda^+) -16D.$$
\end{theorem}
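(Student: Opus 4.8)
The plan is to route everything through the pseudo-Anosov flow $\Phi$ attached to the fibered face $\FF$ by Fried's theorem: $\Phi$ is transverse to every fiber in $\R_+\FF$, its first-return map to any such fiber is the monodromy, and its weak stable and unstable foliations meet each fiber in the invariant singular foliations. Thus the traces of the weak stable/unstable foliations of $\Phi$ on $S$ and on $F$ realize $\lambda^-$ and $\lambda^+$, and one moves them between fibers by flowing. Concretely, for $W\subseteq F$ define the \emph{flow projection} $P\colon W\to S$ by $P(x)=\Phi_{\tau(x)}(x)$, where $\tau(x)>0$ is the first time the forward $\Phi$-orbit of $x$ meets $S$; this is finite since $S$ is a cross-section, transversality makes $\tau$ continuous and $P$ an immersion, and $(x,s)\mapsto\Phi_{s\tau(x)}(x)$ is a homotopy through surfaces transverse to $\Phi$ from the inclusion $W\hookrightarrow M$ to $P\colon W\to S$.

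I would detect injectivity of $P$ in a cyclic cover. In the infinite cyclic cover $N_F\to M$ dual to $[F]$ the lifted flow crosses each lift of $F$ monotonically, so no two points of $W\subseteq F$ lie on a common lifted orbit; hence the first-hit map from $W$ to the preimage of $S$ is injective and, $W$ being connected, lands in one component, namely the infinite cyclic cover $q\colon S_F\to S$ dual to $[F]|_S\in H^1(S;\Z)$. This gives an embedding $\widehat P\colon W\hookrightarrow S_F$ with $q\circ\widehat P=P$; put $W''=\widehat P(W)$, a compact essential subsurface of $S_F$. If $q$ restricted to $W''$ is injective, then $W'=q(W'')$ is an embedded essential subsurface of $S$, the flow homotopy above realizes the required homotopy through surfaces transverse to $\Phi$, and since the weak stable/unstable foliations of $\Phi$ are flow-invariant, $P$ matches the traces of $\lambda^\pm$ on $W$, and their boundary behavior, with those on $W'$; as subsurface projection depends only on this data, $d_{W'}(\lambda^-,\lambda^+)=d_W(\lambda^-,\lambda^+)$. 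This is the first alternative.

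It remains to handle the case where $q|_{W''}$ is not injective. Let $\sigma$ generate the deck group of $S_F\to S$ and let $n\ge 1$ be least with $\sigma^nW''\cap W''\neq\varnothing$; as $W''$ is compact and essential, $\sigma^nW''$ and $W''$ are not nested and hence overlap. Writing $\widetilde\lambda^\pm$ for the $\sigma$-invariant lifts of $\lambda^\pm$ to $S_F$, the trace identification gives $d_W(\lambda^-,\lambda^+)=d_{W''}(\widetilde\lambda^-,\widetilde\lambda^+)$. Applying Behrstock's inequality to the overlapping pair $W'',\sigma^nW''$ with the markings $\widetilde\lambda^+$ and $\widetilde\lambda^-$ in turn, and using $\sigma$-invariance to transport the ``$\sigma^nW''$ terms'' back to $W''$ via $\sigma^{-n}$, one obtains signs $\epsilon_\pm\in\{\pm1\}$ with $\widetilde\lambda^\pm$ within a universal constant of $\pi_{W''}(\partial\sigma^{\epsilon_\pm n}W'')$ in $\mathcal C(W'')$, so that
\[
 d_W(\lambda^-,\lambda^+)\ \le\ d_{W''}\!\big(\partial\sigma^{\epsilon_- n}W'',\ \partial\sigma^{\epsilon_+ n}W''\big)\ +\ O(1).
\]

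The main obstacle is to bound this last quantity by $9D\,|\chi(S)|$ with constants not depending on $M$. The point is that $\sigma$-periodicity of $\widetilde\lambda^\pm$ forces $W''$ to wrap only a controlled amount around the cover $S_F\to S$: a fundamental domain of $S_F\to S$—and with it any deck-translate of the boundary curves of $W''$ as seen inside $W''$—is supported on subsurfaces of $S$ of complexity at most $|\chi(S)|$, and the translates $\sigma^{kn}W''$ meeting $W''$ form a chain whose length is likewise governed by $|\chi(S)|$; a standard intersection-versus-distance estimate for subsurface projections—the same one controlling the constant $D$ of \Cref{th:intro_1}—then bounds $d_{W''}(\partial\sigma^{\epsilon_- n}W'',\partial\sigma^{\epsilon_+ n}W'')$ linearly in $|\chi(S)|$. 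Tracking constants through Behrstock's inequality and this estimate should yield exactly $9D\,|\chi(S)|\ge d_W(\lambda^-,\lambda^+)-16D$. The delicate points are (i) checking that the flow projection really preserves subsurface projections despite $P$ not extending to a homeomorphism between fibers, and (ii) the uniform, $M$-independent control on how far $W''$ wraps inside $S_F$.
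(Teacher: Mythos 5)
Your opening reduction is where the proof breaks. The first-hitting-time map $P(x)=\Phi_{\tau(x)}(x)$ is \emph{not} continuous: $\tau$ jumps along $W\cap S$ (and, in the cover $N_F$, along the preimage of $S$), since points just downstream of $S$ have $\tau$ near the full return time while points on or just upstream of $S$ have $\tau$ near $0$. So $P$ does not give a homotopy of $W$ into $S$, and $\widehat P$ is not a map of $W$ into a single component $S_F$ of the preimage of $S$, let alone an embedding. A quick sanity check shows the claim cannot be repaired: if your argument were valid it would apply verbatim to $W=F$, homotoping the entire fiber $F$ into one component of the preimage of $S$; that would force the class dual to $S$ to vanish on $\pi_1(F)$, hence $[S]\in\R[F]$ and $S=F$, which is absurd for distinct fibers. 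Consequently the dichotomy ``$q|_{W''}$ injective or not'' is never set up. The paper's dichotomy is instead algebraic — whether $\pi_1(W)$ is conjugate into $\pi_1(S)$ — and in the affirmative case one projects along flow lines inside the $S$-cover $N_S\cong S\times\R$ (a globally continuous projection, with no first-hit issues), obtaining only an \emph{immersion} $W\to S$; turning that into an embedded $W'\subset S$ with equal projection distance is exactly the content of \Cref{thm:immersion bound lambda} (the uniform immersion-to-cover theorem, which is why the hypothesis $d_W\ge 10D\ge 37$ is needed), followed by an algebraic argument showing the covering degree is $1$, and the distance equality comes from invariance of $d_W$ under the fibered face, not from matching foliation traces pointwise.

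Your second alternative also has no proof: the Behrstock step is circular, because by the very ordering argument you invoke, $\partial\sigma^{-n}W''$ and $\partial\sigma^{n}W''$ project coarsely onto $\pi_{W''}(\widetilde\lambda^+)$ and $\pi_{W''}(\widetilde\lambda^-)$, so $d_{W''}(\partial\sigma^{\epsilon_-n}W'',\partial\sigma^{\epsilon_+n}W'')$ is itself coarsely $d_W(\lambda^-,\lambda^+)$ — bounding it linearly in $|\chi(S)|$ is precisely the theorem, not a ``standard intersection-versus-distance estimate.'' There is no a priori bound, uniform in $M$ and $F$, on how many fundamental domains of $S_F\to S$ the subsurface $W''$ spans; that uniform control is what the paper extracts from the veering triangulation: when $\pi_1(W)$ is not conjugate into $\pi_1(S)$, the isolated pocket $V_W$ embeds in $\mathring M$ (\Cref{cor:embed}), the essential-intersection lemma (\Cref{lem:essential intersection}) forces every section in a sweep through $V_W$ to meet the triangulated image of $\mathring S$ essentially, each of the at least $(d_W(\lambda^-,\lambda^+)-16D)/D$ transitions (via \Cref{prop:closed_distance} and \Cref{lem:slowed_progress}) contributes a distinct top edge lying on $h_{\mathring S}(\mathring S)$, and an ideal triangulation of $\mathring S$ has at most $9|\chi(S)|$ edges. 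Some such mechanism tying the count to the triangulation of $\mathring S$ (or an equivalent uniform geometric bound) is indispensable; your points (i) and (ii) are not loose ends but the heart of the matter.
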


Along the way to establishing our main theorems, we prove several results that may be independently interesting. While most of these concern the connection between the manifold $M$ and the veering triangulation of the associated fully-punctured manifold (see the next section for details), we also obtain information about subsurface projections to \emph{immersed} subsurfaces.

\medskip

For a finitely generated subgroup $\Gamma < \pi_1(S)$, let $S_\Gamma
\to S$ be the corresponding cover. 
If $Y\subset S_\Gamma$ is a
compact core, the covering map restricted to $Y$ is an immersion and we
say that $Y\to S$ {\em corresponds to $\Gamma<\pi_1(S)$.}  
Lifting to the cover induces a (partially defined) map of curve and arc graphs which we
denote $\pi_Y \colon \A(S)\to \A(Y)$.
(When $\Gamma$ is cyclic, we set $\A(Y)$ to be the annular curve graph $\A(S_\Gamma)$ as usual.)
Note that these constructions depend on
$\Gamma$ but not on the choice of $Y$ and that $\pi_Y$ agrees with the usual
subsurface projection when $Y \subset S$ is an embedded subsurface.

\begin{theorem}[Immersions to covers]\label{th:intro_3}
There is a constant $M\le 38 $ satisfying the following: 
Let $S$ be a surface and let $Y \to S$ be an immersion corresponding
to a finitely generated $\Gamma < \pi_1(S)$. Then either
\begin{itemize}
\item there is a subsurface $W \subset S$ so that $Y \to S$ factors (up to homotopy) through a finite sheeted covering $Y \to W$, or
\item the diameter of the entire projection of $\A(S)$ to $\A(Y)$ is bounded by $M$.
\end{itemize}
\end{theorem}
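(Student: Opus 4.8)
The plan is to analyze the immersion $Y \to S$ by looking at its (one-sided) fiber product, i.e. the pullback of the covering $S_\Gamma \to S$ along itself, which organizes all ``coincidences'' between sheets. Concretely, I would consider the characteristic topology: either two distinct elevations of $Y$ to $S_\Gamma$ share a lot of their boundary pattern, or they are essentially disjoint. The key dichotomy I want to extract is that \emph{either} the immersion $Y \to S$ has the property that $\Gamma$ is contained with finite index in the fundamental group of an embedded subsurface $W \subset S$ (the first alternative), \emph{or} the images of translates $g\cdot \widetilde Y$ (for $g$ ranging over $\pi_1(S)/\Gamma$, lifted to the universal cover $\widetilde S$) are spread out enough that every essential arc or curve $\alpha$ of $S$, when lifted to $Y$, can be homotoped off of any fixed reference arc. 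This is the geometric heart of the bound on $\diam(\pi_Y(\A(S)))$.

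The key steps, in order, would be as follows. First, reduce to the case $\Gamma$ non-cyclic (the cyclic case is the annular projection, where the bound of $M \le 38$ should follow from standard facts about annular projections, since the only way an annulus ``wraps'' is through a proper power, which is exactly the first alternative with $W$ an annulus). Second, fix a hyperbolic metric on $S$ and let $Y \to S$ be realized by a geodesic subsurface (the convex core of $S_\Gamma$), so that $\partial Y$ maps to a geodesic lamination/multicurve downstairs; analyze when two lifts of $\partial Y$ to $\widetilde S$ have linked or nested endpoints on $\partial_\infty \widetilde S$. Third, establish the trichotomy for the ``self-coincidence'' of $Y$: if some nontrivial $g \in \pi_1(S) \setminus \Gamma$ satisfies $g\Gamma g^{-1} \cap \Gamma$ is large (so that $g\widetilde Y$ and $\widetilde Y$ fellow-travel along a large subsurface), then one can build an embedded $W$ containing the image, with $Y\to W$ finite-sheeted; this is where commensurator arguments and the structure of subgroups of surface groups enter. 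Fourth, in the remaining case, show that for any two arcs or curves $\alpha, \beta$ of $S$ with lifts $\til\alpha, \til\beta$ meeting $\til Y$, one can chain together elementary homotopies — crossing at most a controlled number of sheets — to connect $\pi_Y(\alpha)$ to $\pi_Y(\beta)$ in $\A(Y)$, yielding the explicit constant $M \le 38$ after bookkeeping (mirroring the constant-tracking already done for $D \le 15$ elsewhere in the paper).

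The main obstacle I expect is making the third step — the ``finite-sheeted factorization'' alternative — both correct and effective. It is easy to say heuristically that ``if lifts overlap a lot then $Y$ covers something embedded,'' but one must be careful: the relevant object is not a single embedded subsurface but possibly the \emph{characteristic cover} associated to the commensurator $\mathrm{Comm}_{\pi_1(S)}(\Gamma)$, and one needs that this commensurator is itself the fundamental group of an embedded subsurface $W$ (true for surface groups, since infinite-index finitely generated subgroups have nonseparable behavior controlled by incompressible subsurfaces, but this requires a genuine argument). Equivalently, one must rule out the pathology where $Y$ ``almost'' covers an embedded surface but the overlap pattern never stabilizes; quantitatively, one needs a threshold $T$ (depending only on $\chi(S)$ through the bound $M$) such that overlap exceeding $T$ forces the clean factorization. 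I would handle this by a pigeonhole/finiteness argument on the finitely many topological types of subsurfaces of $S$ together with a bounded-cancellation estimate for the immersion, which is also what produces the explicit value of $M$.

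Once Theorem~\ref{th:intro_3} is in hand, I anticipate it feeds back into the proofs of Theorems~\ref{th:intro_1} and~\ref{th:intro_2}: subsurfaces $W$ of one fiber $F$ that are not ``visible'' as embedded subsurfaces of another fiber $S$ become immersed subsurfaces $W \to S$, and the dichotomy above is precisely what converts the failure of embeddedness into the complexity bound $9D\cdot|\chi(S)| \ge d_W(\lambda^-,\lambda^+) - 16D$ appearing in Theorem~\ref{th:intro_2}.
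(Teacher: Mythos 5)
Your proposal does not follow the paper's route, and as it stands it has a genuine gap at exactly the point you flag yourself. The decisive issue is uniformity of the constant. Your step three/four is to be rescued by ``a pigeonhole/finiteness argument on the finitely many topological types of subsurfaces of $S$ together with a bounded-cancellation estimate,'' but any argument that enumerates subsurfaces of $S$ (or thresholds depending on $\chi(S)$, or on the cover) can only produce a constant depending on $S$ --- which is precisely the already-known Rafi--Schleimer statement cited in the paper, not the theorem. The whole content of \Cref{th:intro_3} is that $M\le 38$ is independent of $S$, $\Gamma$ and $Y$, and your sketch contains no mechanism that makes the surface complexity cancel out. Relatedly, your commensurator dichotomy is not the right detector of the first alternative: when $Y$ fails to cover an embedded subsurface, $\mathrm{Comm}_{\pi_1(S)}(\Gamma)$ contains $\Gamma$ with finite index and need not be carried by any embedded $W$, and conversely the embedded (degree-one) case produces no element $g\notin\Gamma$ with large $g\Gamma g^{-1}\cap\Gamma$; the correct criterion, used in the paper, is that $Y$ covers a subsurface if and only if the boundary of the core has no essential re-elevations to the cover. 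Finally, your fourth step asserts, without a mechanism, that failure of factorization lets one ``chain elementary homotopies'' between any two projected arcs; nothing in the sketch explains why the diameter of $\pi_Y(\A(S))$ (as opposed to some intersection pattern) is bounded.

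For comparison, the paper never touches hyperbolic convex cores or commensurators. It first proves the lamination statement \Cref{thm:immersion bound lambda}: realize $\lambda^\pm$ by a quadratic differential $q$, balance $\partial Y$ by the Teichm\"uller flow, and take the $q$-hull $Y_q$ in the cover $X_Y$ (embedded once $d_Y(\lambda^-,\lambda^+)\ge 5$, by \Cref{prop: q_compatible}). If $Y$ does not cover an embedded subsurface there is an essential re-elevation of $\partial Y_q$, and from its first return through $Y_q$ one extracts an essential simple arc or curve $\sigma$ whose $q$-length is at most a bounded multiple of $\ell_q(\partial Y)$; the multiple is controlled by counting singularities inside $Y_q$ via Gauss--Bonnet (at most $2|\chi(Y)|$), and the vertical-strip estimate \Cref{lem:length} converts this into $j_Y(\lambda^\pm,\sigma)\lesssim |\chi(Y)|$. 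The point where uniformity is won is that this linear-in-$|\chi(Y)|$ intersection bound is fed into Bowditch's effective estimate \Cref{lem:effective_B} (via \cref{eqn:i bound 8} and \cref{eqn:i bound 32}), where the complexity cancels and one gets $d_Y(\lambda^-,\lambda^+)\le 36$ outright. The curve-graph statement \Cref{thm:immersion bound} then follows by a limiting trick: approximate arbitrary $\alpha,\beta$ by filling laminations (stable/unstable laminations of $T_\alpha^n\circ f\circ T_\beta^n$) and pass to the limit, giving $\diam_Y(\A(S))\le 38$. If you want to salvage your approach, you would need an analogue of the $|\chi(Y)|$-versus-$|\chi(Y)|$ cancellation --- some quantitative statement in the hyperbolic or combinatorial picture whose constants are complexity-free --- and a reduction from arbitrary elements of $\A(S)$ to objects (like $\lambda^\pm$) that interact with $Y_q$ in a controlled way; without those two ingredients the proposal does not yield the theorem.
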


The novelty of
\Cref{th:intro_3}
 is that the constant $M \le 38$ is explicit and uniform over all surfaces and immersions. Previously, Rafi and Schleimer proved that for any finite cover $\til S \to S$ there is a constant $T\ge0$ (depending on $\til S$ and $S$) such that if $Y \subset \til S$ is a subsurface with $d_Y(\alpha, \beta) \ge T$ for $\alpha,\beta \in \A(S)$, then $Y$ covers a subsurface of $S$ \cite[Lemma 7.2]{rafi2009covers}.

\subsection*{Relation to our previous work}
Given a fibered face $\FF$ of $M$ and its associated pseudo-Anosov flow,
the stable/unstable laminations $\Lambda^\pm$ of the flow intersect each fiber to give the laminations associated to its monodromy. Removing the singular orbits of the flow produces the \emph{fully-punctured} manifold $\cM$ associated to the face $\FF$. If $\phi \colon S \to S$ is the monodromy of some fiber $S$ representing a class in $\RR_+\FF$, then $\cM$ is the mapping torus of the surface $\cS$ obtained from $S$ by puncturing at the singularities of $\phi$. The fibered face of $\cM$ containing $\cS$ is denoted $\mathring{\FF}$ and the inclusion $\cM \subset M$ induces an injective homomorphism $H^1(M \; \RR) \hookrightarrow H^1(\cM \; \RR)$ mapping $\RR_+\FF$ into $\RR_+\mathring{\FF}$.

In our previous work \cite{veering1}, we restricted our study 
of subsurface projections in fibered manifolds 
to the fully-punctured settings. When $\cM$ is fully-punctured, it admits a canonical \emph{veering triangulation} $\tau$ \cite{agol2011ideal, gueritaud} associated to the fibered face $\mathring{\FF}$. 
We found that the combinatorial structure of this triangulation encodes the hierarchy of subsurface projections for each fiber $\cF$ in $\RR_+\mathring{\FF}$. 
As a result, we established versions of \Cref{th:intro_1,th:intro_2} in that restricted setting (though with better constants than available in general). In fact, when the fibered manifold is fully-punctured there are additional surprising connections between the veering triangulation and the curve graph.
For example, a fiber $\cF$ of $\cM$ is necessarily a punctured surface, and edges of the triangulation $\tau$ (when lifted to the cover of $\cM$ corresponding to $\cF$) form a subset of the arc graph $\A(\cF)$. 
This subset is \emph{geodesically connected} in the sense that for any pair of arcs of $\cF$ coming from edges of $\tau$ there is a geodesic in $\A(\cF)$ joining them consisting entirely of veering edges \cite[Theorem 1.4]{veering1}. Such a result cannot have a precise analog if, for example, the manifold $M$ is closed.

\smallskip

In this paper, we extend our study to general (e.g. closed) hyperbolic fibered manifolds. The main difficulty here is that these manifolds do not admit veering triangulations. So our approach is to start with an arbitrary fibered manifold $M$ and consider the veering triangulation of the associated fully-punctured manifold $\cM$. 
(For example, the constant $|\FF|$ appearing in \Cref{th:intro_1} is precisely the number of tetrahedra of the veering triangulation of $\cM$ associated to $\mathring{\FF}$.)
Unfortunately, results about subsurface projections to fibers of $M$ do not directly imply the corresponding statements in $\M$. Instead, we develop tools to relate \emph{sections} of the veering triangulation (i.e. ideal triangulations of the fully-punctured fiber by edges of the veering triangulation) to subsurface projections in the fibers of $M$.  

\subsection*{Summary of paper}
In \Cref{sec:background} we present background material. In particular, we summarize the definition of the veering triangulation (\Cref{veering defs}) and recall the main constructions from \cite{veering1} that connect the structure of the veering triangulation on $\cS \times \RR$ to subsurface representatives in $\cS$ (\Cref{sec:compatibility}).

\Cref{sec:veering} introduces the lattice structure of \emph{sections} of the veering triangulation. It concludes with \Cref{sec:proj_tau} which details how sections (which are ideal triangulations of the fully-punctured surface $\cS$) are used to define projections to the curve graph of subsurfaces of the original surface $S$. This is followed by \Cref{sec:immersions} where \Cref{th:intro_3} is proven. This section does not use veering triangulations and can be read independently from the rest of the paper.

In \Cref{sec:bounds}, we prove two estimates that relate the veering triangulation of the fully-punctured manifold $\cM$ to fibers of $M$. The first (\Cref{prop:closed_distance}) shows that for each subsurface $Y$ of $S$, there are top and bottom sections of $\cS \times \RR$ which project close to the images of $\lambda^\pm$ in $\A(Y)$. The second (\Cref{lem:slowed_progress}) shows that these projections to $\A(Y)$ move slowly from $\pi_Y(\lambda^-)$ to $\pi_Y(\lambda^+)$ depending on the size of $Y$. Both of these estimates are needed for proofs of \Cref{th:intro_1,th:intro_2}.

Finally, in \Cref{sec:uniform_bounds,sec:dichotomy},  \Cref{th:intro_1,th:intro_2} are proven. The bulk of the proof of \Cref{th:intro_1} involves building a simplicial \emph{pocket} for the subsurface $Y$ that embeds into the veering triangulation of $\cM$ whose ``width'' is approximately $|\chi(Y)|$ and whose ``depth'' is at least $d_Y(\lambda^-,\lambda^+)$. For \Cref{th:intro_2}, we show that if a subsurface $W$ of a fiber $F$ is not homotopic into another fiber $S$ (in the same fibered cone) then, after puncturing along singular orbits of the flow, a section (triangulation) of $\cS$ contains many edges, proportional to the ``depth'' of the pocket for $W$. For each of these arguments, the difficulty lies in the fact that we are extracting information about the original manifold $M$ and projections to subsurfaces of its fibers by relying on the veering triangulation of $\cM$, which a priori only records information about projections to its fully-punctured fibers.

\subsection*{Acknowledgments}
Minsky was partially supported by NSF grants DMS-1610827 and DMS-2005328, and Taylor was partially supported by NSF grants DMS-1744551 and DMS-2102018 and the Sloan Foundation.


\section{Background}
\label{sec:background}
Here we record basic background that we will need throughout the paper. We begin with some material that follows easily from standard facts about curve graphs and then recall the definition of the veering triangulation. We conclude by reviewing results from \cite{veering1} which develop connections between the two.

\subsection{Curve graph facts and computations}
The arc and curve graph $\A(Y)$ for a compact surface $Y$ is the graph whose
vertices are homotopy classes of essential simple closed curves and proper arcs. 
Edges join vertices precisely when the vertices have disjoint representatives on $Y$.
Here, essential curves/arcs are those which are not homotopic (rel endpoints) to a point or into the boundary.

If $Y$ is not an annulus,
homotopies of arcs are assumed to be homotopies through maps sending boundary to boundary. This is equivalent to considering proper embeddings $\R \to \int(Y)$ into the interior of $Y$ up to proper homotopy, and we often make use of this perspective.
When $Y$ is an annulus the homotopies are also required to fix the endpoints.
We consider $\A(Y)$ as a metric space by using the graph metric, although we usually only consider distance between vertices. For additional background, the reader is referred to \cite{MM2} and \cite{ECL1}.

\smallskip

If $Y\subset S$ is an essential subsurface (i.e. one that is $\pi_1$--injective and contains an essential curve), we have subsurface projections $\pi_Y(\lambda)$ which are
defined for simplices $\lambda\subset \A(S)$ that intersect $Y$
essentially, otherwise the projection is defined to be empty. Namely, after lifting $\lambda$ to the
cover $S_Y$ associated to $\pi_1(Y)$, 
we obtain a collection of
properly embedded disjoint essential arcs and curves, 
which determine a simplex of $\A(Y): = \A(S_Y)$.
We let $\pi_Y(\lambda)$ be the union of these vertices. The same definition applies to a lamination $\lambda$ that intersects $\partial Y$ essentially.


When $Y$ is an annulus these arcs have natural endpoints
coming from the standard compactification of $\til S = \HH^2$ by a
circle at infinity. We remark that $\pi_Y$ does not depend on any choice
of hyperbolic metric on $S$.

When $Y$ is not an annulus and $\lambda$ and $\boundary Y$ are in
minimal position, we can also identify $\pi_Y(\lambda)$ with the
isotopy classes of components of $\lambda\intersect Y$. 


When $\lambda,\lambda'$ are two
arc/curve systems or laminations, we denote by $d_Y(\lambda,\lambda')$ the 
diameter of the union of their images in $\A(Y)$,
 that is
$$
d_Y(\lambda,\lambda') = \diam_{\A(Y)}(\pi_Y(\lambda)\union\pi_Y(\lambda')).
$$


\subsubsection{An ordering on subsurface translates} \label{sec:order}
Here we prove a lemma that will be needed in \Cref{sec:uniform_bounds}. It establishes an
ordering on translates of a fixed subsurface under a pseudo-Anosov map by appealing to a
more general ordering of Behrstock--Kleiner--Minsky--Mosher \cite{BKMM}, as refined in
Clay--Leininger--Mangahas \cite{CLM}.

Fix a pseudo-Anosov $\phi \colon S \to S$ with stable and unstable laminations $\lambda^+$ and $\lambda^-$, respectively. Our convention is that $\phi$'s unstable lamination $\lambda^-$ is its attracting fixed point on $\mathcal{PML}$.

\begin{lemma}\label{phi n of boundary Y} \label{lem:order}
If $d_Y(\lambda^-,\lambda^+) \ge 20$, then for any $n\ge 1$ with $\pi_Y(\phi^n(\partial Y)) \neq \emptyset$,
\[
d_Y(\phi^n(\boundary Y), \lambda^-) \le 4.
\]
\end{lemma}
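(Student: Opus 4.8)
The plan is to compare the behavior of the subsurface projection along the $\phi$-orbit of $\partial Y$ using the Behrstock inequality together with the fact that $\lambda^\pm$ are the limits (in the appropriate sense) of $\{\phi^n(\partial Y)\}$ as $n \to \pm\infty$. The key structural input is the ordering of Behrstock--Kleiner--Minsky--Mosher, as refined by Clay--Leininger--Mangahas, applied to the translates $\phi^n(Y)$ of the fixed subsurface $Y$: because $d_Y(\lambda^-,\lambda^+)$ is large, the subsurface $Y$ itself is "active" along the axis of $\phi$ in the curve complex, and its forward and backward translates are linearly ordered with $Y$ sitting in the middle. Concretely, I would first record that $d_Y(\phi^n(\partial Y),\lambda^-)$ is monotone-ish in $n$: for large positive $n$, $\phi^n(\partial Y)$ is very close to the attracting lamination $\lambda^-$ in $\A(Y)$ whenever it projects nontrivially, since $\phi^n(\partial Y) \to \lambda^-$ in $\mathcal{PML}$ and projections are coarsely continuous on the locus where they're defined.

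The main step is then a Behrstock-inequality argument. Suppose toward contradiction that $d_Y(\phi^n(\partial Y),\lambda^-) > 4$ for some $n \ge 1$ with $\pi_Y(\phi^n(\partial Y)) \ne \emptyset$. Apply $\phi^{-n}$: since $\phi$ is a homeomorphism, $\pi_{\phi^{-n}Y}$ intertwines with $\pi_Y$ via $\phi^{-n}$, so this says $d_{\phi^{-n}Y}(\partial Y, \phi^{-n}\lambda^-) = d_{\phi^{-n}Y}(\partial Y, \lambda^-) > 4$ (using $\phi$-invariance of $\lambda^-$). Now I would play the subsurface $\phi^{-n}Y$ off against $Y$: for $n \ge 1$ these two subsurfaces overlap, and the BKMM/CLM ordering tells us that $\partial Y$ projects to a definite point of $\A(\phi^{-n}Y)$ (namely close to $\phi^{-n}\lambda^-$, hence to $\lambda^-$, the "repelling side" as seen from $\phi^{-n}Y$). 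Feeding this into the Behrstock inequality with the pair $(Y, \phi^{-n}Y)$ and the markings $\lambda^+$ (or $\partial(\phi^{-n}Y) = \phi^{-n}\partial Y$) yields a bound forcing $d_Y(\partial Y,\lambda^-)$ or $d_Y(\partial(\phi^{-n}Y),\lambda^-)$ to be small; combined with $d_Y(\lambda^-,\lambda^+)\ge 20$ and the triangle inequality this contradicts the assumed $> 4$. The numerology $4$ versus $20$ should come out of carefully tracking the Behrstock constant (which is absolute, around $10$ in the arc-and-curve-graph normalization used here) plus the distance-$1$ slack coming from $\partial Y$ itself being a vertex of $\A(Y)$-relevant data.

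The hard part will be the bookkeeping in the second step: making precise in which "direction" along the axis each of $\partial Y$, $\lambda^-$, $\lambda^+$ lies as seen from the translated subsurface $\phi^{-n}Y$, and ensuring the BKMM/CLM ordering genuinely applies (this needs $d_Y(\lambda^-,\lambda^+)$ above the threshold guaranteeing that $Y$ and its translates are mutually overlapping and consistently ordered — this is exactly why the hypothesis is $\ge 20$ rather than something smaller). A secondary technical point is handling the case distinction coming from whether $\pi_Y(\phi^n(\partial Y))$ is nonempty (explicitly assumed) versus whether the relevant boundary curves project nontrivially to the translated subsurfaces along the way; I expect one can arrange that the only projections that must be nonempty are forced by the overlapping condition furnished by the large-distance hypothesis. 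Once the ordering is set up correctly, the inequality itself is a one-line application; the work is entirely in the setup, so I would lean heavily on \Cref{lem:order}'s cited sources \cite{BKMM, CLM} for the ordering statement in the exact form needed.
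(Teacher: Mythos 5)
Your proposal assembles the right ingredients---the BKMM/CLM order on translates of $Y$, the convergence $\phi^{in}(\partial Y)\to\lambda^-$ in $\mathcal{PML}$, and a Behrstock-type inequality---but the step that is supposed to produce the contradiction does not close. After applying $\phi^{-n}$ and invoking the Behrstock inequality for the pair $(Y,\phi^{-n}Y)$ with $\lambda^-$, the negation of the desired conclusion only yields that $d_Y(\phi^{-n}(\partial Y),\lambda^-)$ is small. For a fixed $n$ this is not yet absurd: nothing established so far forces $\phi^{-n}(\partial Y)$ to project near $\lambda^+$ rather than $\lambda^-$ in $\A(Y)$, and that assertion is exactly the mirror image of the lemma (the same statement for $\phi^{-1}$ with $\lambda^\pm$ interchanged), so as written the argument is circular. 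Likewise your ``monotone-ish in $n$'' claim and the claim that the translates are ``linearly ordered with $Y$ sitting in the middle'' are precisely what has to be proved, not inputs you may assume; and the quantity $d_Y(\partial Y,\lambda^-)$ appearing in your Behrstock step is not even defined, since $\partial Y$ has empty projection to $Y$. (Minor point: the constant $4$ does not come from tracking a Behrstock constant near $10$; it is the constant in the CLM formulation of the order, where $Y\prec Z$ iff $d_Y(\partial Z,\lambda^+)\le 4$ iff $d_Z(\partial Y,\lambda^-)\le 4$.)

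The missing idea is the mechanism that pins down the direction of the order at the fixed exponent $n$, and it is a propagation along the whole forward orbit rather than a single inequality at one step. Set $Y_i=\phi^{in}(Y)$. Since $\partial Y_i\to\lambda^-$ in $\mathcal{PML}$, for all large $i$ one has $d_Y(\partial Y_i,\lambda^-)\le 1$, hence $Y_i\prec Y_0$. If the wrong alternative $Y_0\prec Y_1$ held, then $\phi$-invariance of $\lambda^\pm$ would give $Y_i\prec Y_{i+1}$ for every $i$, and transitivity of the strict partial order of \cite{CLM} would force $Y_0\prec Y_i$ for all $i$, contradicting $Y_i\prec Y_0$. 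Since overlapping members of $\{Z: d_Z(\lambda^-,\lambda^+)\ge 20\}$ are always comparable, the only possibility is $Y_1\prec Y_0$, which is the stated bound $d_Y(\phi^n(\partial Y),\lambda^-)\le 4$. If you want to keep your framing, you must add exactly this iteration-plus-transitivity step (or an equivalent argument) to resolve the direction ambiguity you flagged as ``the hard part''; without it the proof is incomplete.
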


\begin{proof}
First consider the set of subsurfaces $\mc S = \{Y: d_Y(\lambda^-,\lambda^+) \ge
20\}$. If $Y,Z$ are members of $\mc S$
that overlap nontrivially then, following \cite{CLM}, we say $Y\prec Z$ if
\[
d_Y(\boundary Z,\lambda^+) \le 4.
\]
According to \cite[Proposition 3.6]{CLM}, this is equivalent to the condition that $d_Z(\boundary Y,\lambda^-) \le 4$, and any two overlapping $Y,Z \in \mc S$ are ordered.
Moreover by \cite[Corollary 3.7]{CLM}, $\prec$ is a strict partial order on $\mc S$.

Returning to our setting, suppose that $Y\in \mc S$ and that
$\phi^n(Y)$ and $Y$ overlap for some $n\ge 1$. Consider the sequence $Y_i =
\phi^{in}(Y)$.

Now, we know that $\boundary Y_i \to \lambda^-$ in $\mathcal{PML}$ as $i\to+\infty$. This implies
that for large enough $i$ we have $d_Y(\boundary Y_i,\lambda^-) \le
1$, and hence $Y_i \prec Y_0$.

On the other hand if $Y_0 \prec Y_1$
then, since $\phi$ preserves $\lambda^\pm$,
we have $Y_{i} \prec Y_{i+1}$ for all $i$. Since $\prec$ is transitive, this would imply that
$Y_0 \prec Y_i$, a contradiction.

Since $Y_0$ and $Y_1$ are ordered, we must have $Y_1 \prec Y_0$.
Hence, $d_Y(\phi^n(\boundary Y) ,\lambda^-) \le 4$, which is what we wanted to prove.
\end{proof}

\subsubsection{Distance and intersection number} \label{distance and intersection}
For an orientable surface $S$ with genus $g\ge 0$ and $p\ge 0$ punctures, set $\zeta =
\zeta(S) = 2g+p-4 = |\chi(S)| -2$.
The following lemma of Bowditch will be important in making our estimates uniform over complexity. Asymptotically stronger, yet less explicit, bounds were first proven by Aougab \cite{Aougab1}. 

\begin{lemma}[Bowditch \cite{uniformBo}] \label{lem:effective_B}
For any integer $n\ge0$ and \emph{curves} $\alpha,\beta \in \A(S)^{(0)}$ with $\zeta =\zeta(S)$,  
\[ 2^n \cdot i(\alpha,\beta) \le \zeta^{n+1} \implies d_S(\alpha,\beta) \le 2(n+1).\]
\end{lemma}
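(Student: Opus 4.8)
The plan is to argue by induction on $n$, with a single surgery step that lowers $n$ by one.

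If $\alpha\cup\beta$ does not fill $S$, some essential curve is disjoint from both, so $d_S(\alpha,\beta)\le 2\le 2(n+1)$ and there is nothing to prove; so assume throughout that $\alpha\cup\beta$ fills $S$. In the base case $n=0$ the hypothesis reads $i(\alpha,\beta)\le\zeta$, and I claim this is incompatible with filling, so the base case never arises. This follows from a standard Euler-characteristic count: put $\alpha,\beta$ in minimal position, so that no complementary region of $\alpha\cup\beta$ is a monogon or an unpunctured bigon, note that filling forces every complementary region to be a disk or a lightly punctured disk, and feed the identity $\chi(S)=-i(\alpha,\beta)+\sum_j\chi(P_j)$ over the complementary regions $P_j$ into the side count $\sum_j(\#\text{sides of }P_j)=4\,i(\alpha,\beta)$ (each $P_j$ has an even number of sides, hence at least $4$ unless it is a punctured bigon). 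Equivalently and more cleanly, $\beta$ meets $S\ssm\alpha$ in an arc system filling $S\ssm\alpha$, and a filling arc system needs roughly as many arcs as $|\chi|$ of the surface, which here is enough to force $i(\alpha,\beta)\ge\zeta+1$, contradicting the hypothesis. (Either way the punctures require a little bookkeeping, since a once-punctured bigon is permitted.)

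For the inductive step fix $n\ge1$, assume the statement for $n-1$, and suppose $2^n\,i(\alpha,\beta)\le\zeta^{n+1}$ with $\alpha\cup\beta$ filling. The heart of the argument is a surgery estimate: \emph{there is an essential curve $\gamma$ with $i(\gamma,\alpha)=0$ and $i(\gamma,\beta)\le\tfrac{2}{\zeta}\,i(\alpha,\beta)$} (up to rounding). Granting this, $2^{n-1}\,i(\gamma,\beta)\le 2^{n-1}\cdot\tfrac{2}{\zeta}\,i(\alpha,\beta)=\tfrac{2^n}{\zeta}\,i(\alpha,\beta)\le\zeta^{n}$, so the inductive hypothesis applied to $(\gamma,\beta)$ gives $d_S(\gamma,\beta)\le 2n$, and hence $d_S(\alpha,\beta)\le d_S(\alpha,\gamma)+d_S(\gamma,\beta)\le 1+2n\le 2(n+1)$, as required.

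Producing $\gamma$ is the step I expect to be the main obstacle. Cutting $\beta$ along $\alpha$ yields $k=i(\alpha,\beta)$ pairwise disjoint arcs in $S\ssm\alpha$, essential there by minimal position, and filling $S\ssm\alpha$ (otherwise a curve disjoint from all of them and from $\alpha$ would give $d_S(\alpha,\beta)\le 2$). Since $S\ssm\alpha$ has Euler characteristic $\chi(S)$, hence complexity linear in $\zeta$, these $k$ arcs — and the roughly $k$ complementary regions they determine — must contain a lot of repetition once $k$ is large compared with $\zeta$: many arcs are mutually parallel, many regions are squares. A surgery exploiting this repetition should then yield an essential curve $\gamma$ disjoint from $\alpha$ with at most $\tfrac{2}{\zeta}k$ crossings with $\beta$; making this precise — in particular verifying that $\gamma$ is genuinely essential, and giving an honest count of $i(\gamma,\beta)$ — is the technical core. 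This is exactly where the estimate must improve on the naive Hempel surgery, which only divides the intersection number by $2$ and would confine the argument to $\zeta\le 8$; the small-$\zeta$ cases, where the stated inequality is vacuous or nearly so, are disposed of separately. Carrying the numerical constants faithfully through the surgery and the two Euler-characteristic identities is what yields the explicit statement.
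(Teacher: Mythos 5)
The paper does not prove this lemma at all---it is quoted from \cite{uniformBo}---so the only meaningful comparison is with Bowditch's argument, and your proposal in effect reproduces the easy outer layer of that argument while omitting its substance. The parts you do carry out are fine: the base case is correct, since if $\alpha\cup\beta$ fills then all complementary regions are disks or once-punctured disks, and the Euler characteristic/side count gives $i(\alpha,\beta)\ge |\chi(S)|=\zeta+2>\zeta$ (so $i(\alpha,\beta)\le\zeta$ forces a curve disjoint from both and $d_S(\alpha,\beta)\le 2$); and the inductive arithmetic works, since $i(\gamma,\beta)\le\tfrac{2}{\zeta}i(\alpha,\beta)$ together with $2^n i(\alpha,\beta)\le\zeta^{n+1}$ gives $2^{n-1}i(\gamma,\beta)\le\zeta^n$ and hence $d_S(\alpha,\beta)\le 1+2n\le 2(n+1)$. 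But the surgery estimate you ``grant''---an \emph{essential} curve $\gamma$ with $i(\gamma,\alpha)=0$ and $i(\gamma,\beta)\le\tfrac{2}{\zeta}\,i(\alpha,\beta)$---is not a step of your proof; it is precisely Bowditch's key lemma, i.e.\ the entire content of the statement being proved. You acknowledge this (``the step I expect to be the main obstacle'', ``is the technical core''), so as written the proposal reduces the lemma to itself.

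Moreover, the heuristic you offer for that step does not obviously close it. Pigeonholing the $i(\alpha,\beta)$ arcs of $\beta\smallsetminus\alpha$ over parallelism classes in the cut surface only yields a family of size about $i(\alpha,\beta)/(3|\chi(S)|)$, with a constant too large to account for the factor $\tfrac{2}{\zeta}$, and it is not explained how a large parallel family converts into a low-intersection curve: two \emph{adjacent} parallel arcs cobound a rectangle whose interior and whose $\alpha$-sides are disjoint from $\beta$ (minimal position excludes returning arcs), so the obvious surgery on such a pair produces the boundary of that rectangle, which is null-homotopic; while surgering an arc of $\beta\smallsetminus\alpha$ to a segment of $\alpha$ requires locating an arc whose endpoints are close along $\alpha$ in terms of crossing points, which is exactly the nontrivial count, and the essentiality of the resulting curve and the rounding needed to keep $2^{n-1}i(\gamma,\beta)\le\zeta^n$ are also unaddressed. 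So there is a genuine gap: the proof is incomplete precisely where Bowditch's paper does its work, and one should either supply that surgery argument in full or, as the paper does, simply cite \cite{uniformBo}.
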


If the surface $S$ is punctured, then for any arcs $a$ and $b$ in $\A(S)^{(0)}$ there is a curve $\alpha \in \A(S)^{(0)}$ disjoint from $a$ and a curve $\beta \in \A(S)^{(0)}$ disjoint from $b$ such that $i(\alpha,\beta) \le 4 \cdot i(a,b) +4$. These curves are constructed using the standard projection from the arc graph to the curve graph: $\alpha$ is a boundary component of a neighborhood of $a \cup P_1 \cup \ldots \cup P_k$, where $P_i$ is a loop circling the $i$th puncture of $S$.

\smallskip

Applying \Cref{lem:effective_B} together with the above observation, we compute that for curves/arcs $a,b \in \A(S)^{(0)}$ and  $\zeta(S) \ge 3$,
\begin{align*}
d_S(a,b)
< 6+  2 \cdot \frac{\log(2 i(a, b) +1)}  {\log(\zeta/2))},
\end{align*}
where $\log = \log_2$. 
We also recall the standard complexity independent inequality (see e.g. \cite{He, schleimer2006notes})
\[
d_S(a,b) \le 2\log(i(a,b)) +2,
\]
where $i(a,b) >0$.
Using these inequalities, straightforward computations (which we omit) show that for any curves/arcs $a,b \in \A(S)^{(0)}$:
\begin{equation}\label{eqn:i bound 8}
  i(a,b) \le 8|\chi(S)| +4  \implies d_S(a,b) \le 15,
\end{equation}
and
\begin{equation}\label{eqn:i bound 32}
i(a,b) \le 32|\chi(S)| +8 \implies d_S(a,b) \le 18.
\end{equation}

We remark that the above mentioned work of Aougab \cite{Aougab1} implies that if $i(a,b) \le K |\chi(S)|$, then $d_S(a,b) \le 3$, so long as $|\chi(S)|$ is sufficiently large (depending on $K$).

\subsubsection{Proper graphs} \label{sec:prop_graph}
Throughout the paper we will use curve graph tools to study objects that arise from (partially) ideal triangulations of surfaces. To do this, we introduce the notion of a proper graph.

A \define{proper graph in $S$} is a one-complex $G$ minus some subset of the
vertices, properly embedded in $S$.  
A connected proper graph is \emph{essential} if it is not properly homotopic into an end of $S$
 or to a point. In general, $G$ is essential if some component is essential.

A proper arc or curve $a$ is \emph{nearly simple} in a proper graph $G$ if $a$ is properly homotopic to a proper path or curve in $G$ which visits no vertex of $G$ more than twice.
Note that a proper graph $G$ in $S$ is essential if and only if it carries an essential arc or curve. Define
\begin{equation}\label{A_S definition}
\A_S(G) = \{a \in \A(S)^{(0)} : a \text{ is nearly simple in } G\}.
\end{equation}

\begin{corollary} \label{cor:diam_bound}
Suppose that $G$ is an essential proper graph in $S$ with at most $2|\chi(S)| +1$ vertices. Then $\diam_S(\A_S(G)) \le D$ for $D = 15$.
\end{corollary}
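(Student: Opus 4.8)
The plan is to bound the number of edges (equivalently, intersection numbers) of any nearly simple arc or curve in $G$ against each other, and then feed this into the counting inequality \eqref{eqn:i bound 8}. First I would fix $a, b \in \A_S(G)$; by definition each is properly homotopic to a path or loop carried by $G$ that traverses each vertex of $G$ at most twice. I would then isotope $a$ and $b$ to their carried representatives and estimate $i(a,b)$ by a combinatorial count: intersections between $a$ and $b$ can occur only along edges of $G$ or near vertices of $G$. Since $G$ has at most $2|\chi(S)| + 1$ vertices, and each of $a, b$ passes through each vertex at most twice, the local contribution near each vertex is bounded by a universal constant, and the total is $O(|\chi(S)|)$. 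The key point is that the bound must come out as $i(a,b) \le 8|\chi(S)| + 4$ exactly, so that \eqref{eqn:i bound 8} applies and gives $d_S(a,b) \le 15 = D$. Since $G$ is essential, both projections are nonempty, so this controls $\diam_S(\A_S(G))$.

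In more detail, the main step is the intersection estimate. I would push $a$ and $b$ into a small regular neighborhood of $G$, so that away from the vertices they run parallel to edges of $G$ (contributing no intersections there after a further isotopy making parallel strands disjoint), and all intersections are localized in disjoint disk neighborhoods of the vertices. At a vertex $v$ of valence (say) $k$, the strands of $a$ passing through $v$ — at most two of them — and the strands of $b$ through $v$ — at most two — can be put in a position where they meet in at most a bounded number of points; with two strands of each, a clean count gives at most $4$ intersection points per vertex (two strands of $a$ against two strands of $b$, each pair meeting once in a disk). Summing over at most $2|\chi(S)| + 1$ vertices gives $i(a,b) \le 4(2|\chi(S)| + 1) = 8|\chi(S)| + 4$. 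I should double-check the edge case where $a$ or $b$ is a short loop or passes through few vertices, and the annular case for the projection; but these only improve the count. With the intersection bound in hand, \eqref{eqn:i bound 8} finishes the proof.

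The main obstacle I anticipate is making the "at most $4$ intersections per vertex" count genuinely rigorous and sharp enough to land at the stated constant. One has to be careful that "visits no vertex more than twice" is interpreted correctly — a visit meaning a maximal subarc of $a$ inside the vertex disk — and that the strands of $a$ alone (resp. $b$ alone) can be made disjoint inside each vertex disk (they can, since $a$ is embedded, being a simple arc/curve), so the only intersections are between an $a$-strand and a $b$-strand. Then each of the at most $2 \times 2 = 4$ such pairs of strands, being two arcs in a disk with endpoints on the boundary, can be isotoped to meet at most once. The bookkeeping to ensure the global isotopies of $a$ and $b$ (making them carried by $G$, then pushing into a neighborhood, then minimizing within vertex disks) are compatible and do not reintroduce intersections along edges is the fiddly part, but it is routine once set up, so I would state it cleanly and omit the picture-chasing.
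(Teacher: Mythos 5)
Your proposal is correct and follows essentially the same route as the paper's proof: realize both nearly simple arcs/curves in a small neighborhood of $G$, localize intersections at the at most $2|\chi(S)|+1$ vertex neighborhoods, count at most $4$ intersections per vertex to get $i(a,b)\le 8|\chi(S)|+4$, and apply \eqref{eqn:i bound 8} to conclude $d_S(a,b)\le 15$. The extra care you describe about strands and isotopies is exactly the implicit content of the paper's shorter argument.
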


\begin{proof}
Let $a$ and $b$ be essential arcs or curves that are nearly simple in $G$. Realize $a$ and $b$ in a small neighborhood of $G$ so that they intersect only in neighborhoods of the vertices of $G$. Since $a$ and $b$ each pass through any neighborhood of a vertex at most twice, they intersect at most $4(2|\chi(S)| +1)$ times. By the computations in \cref{eqn:i bound 8}, this implies that $d_S(a,b) \le 15$.
\end{proof}

\subsection{Veering triangulations}
\label{veering defs}

Our basic object here is a Riemann surface $X$ with an integrable holomorphic quadratic
differential $q$, which fits into a sequence
$$
\cX \subset X \subset \overline X
$$
as follows: $\overline X$ is a closed Riemann surface on which $q$ extends to a meromorphic
quadratic differential, and $\PP = \overline X \ssm X$ is a finite set of
{\em punctures} which includes the poles of $q$, if any.
Let $\sing(q)$ be the union of $\PP$ with the zeros of $q$, so that
$$
\mathrm{poles}(q) \subseteq \PP \subseteq \sing(q)
$$
and set $\cX = \overline X \ssm \sing(q)$.
When $X=\cX$ we say that $X$ is {\em fully-punctured}. 

Let $\lambda^+$ and $\lambda^-$ be the vertical and horizontal
foliations of $q$, which we assume contain no saddle connections. 

The constructions of Agol \cite{agol2011ideal} and Gueritaud \cite{gueritaud}
yield a fibration
$$
\Pi \colon \N \to \cX
$$
whose fibers are oriented lines, so that $\N \cong \cX\times\R$, and $\N$ is equipped
with an ideal triangulation $\tau$ whose tetrahedra, called $\tau$-simplices, are 
characterized by the following description:

Let $p\colon\til \cX\to \cX$ be the universal covering map and $\widehat X$ the metric completion
of $\til \cX$. Note
that $p$ extends to an infinitely branched covering $\widehat X \to \overline
X$. 

\realfig{gue-tetra}{A maximal singularity-free rectangle $R$ defines a
  tetrahedron equipped with a map into $R$.}

A {\em singularity-free rectangle} in $\widehat X$ is an embedded rectangle whose
edges are leaf segments of the lifts of $\lambda^\pm$ and whose
interior contains no singularities of $\widehat X$. 
If $R$ is a {\em maximal} singularity-free rectangle in $\widehat X$ then
it contains exactly one singularity on the interior of  each edge. The four singularities span a
quadrilateral in $R$ which we can think of as the image of a tetrahedron by a projection
map whose fibers are  intervals, as pictured in \Cref{gue-tetra}.

The tetrahedra of $\N$ are identified with all such tetrahedra, up to the action of
$\pi_1(\cX)$, where the restriction of $\Pi$ is exactly this projection to the
rectangles, followed by $p$. 

A \define{$\tau$-edge} in $\cX$  is the $\Pi$-image of an edge of $\tau$, or equivalently a
saddle connection of $q$ whose lift to $\til \cX$ spans a singularity-free rectangle. A
$\tau$-edge in $X$ or $\overline X$  is the closure of a $\tau$-edge in $\cX$. 

\medskip

When $\phi \colon S\to S$ is a pseudo-Anosov homeomorphism, let $(X,q)$ denote $S$ endowed with a
quadratic-differential $q$ whose foliations are the stable and unstable foliations of
$\phi$. Then $\lambda^\pm$ have no saddle connections so we may construct $\N$, on which $\phi$
induces a simplicial homeomorphism $\Phi$ of $\N$, whose quotient $\cM_\phi$ is the mapping 
torus of $\phi|_{\cS}$.
Equivalently, $\cM = \cM_\phi$ is obtained from the mapping torus $M_\phi$ by removing the
singular orbits of its suspension flow, which we discuss next.

\subsection{Fibered faces of the Thurston norm}
\label{sec:tnorm}
Let $M$ be a finite-volume hyperbolic $3$-manifold.
A fibration $\sigma\colon M\to S^1$ of $M$ over the circle 
comes with the following structure:
there is a primitve integral cohomology class in $H^1(M;\Z)$
represented by $\sigma_* \colon \pi_1M\to \Z$, which is the Poincar\'e dual
of the fiber $F$. There is also a 
representation of $M$ as a quotient $\left (F\times\R \right) /\Phi$
where $\Phi(x,t) = (\phi(x),t-1)$ and where $\phi \colon F\to F$ is a pseudo-Anosov
homeomorphism called the monodromy map. 
The map $\phi$ has stable and unstable (singular) measured foliations
$\lambda^+$ and $\lambda^-$ on $F$. Finally there is the suspension
flow inherited from the natural $\R$ action on $F\times\R$, and 
suspensions $\Lambda^\pm$ of $\lambda^\pm$ which are flow-invariant 2-dimensional
foliations of $M$. 
Note that the deck transformation $\Phi$ translates in the \emph{opposite} direction of the lifted flow. This is so that the first return map to the fiber $F$ equals $\phi$.

The fibrations of $M$ are organized by the {\em Thurston norm}
$||\cdot||$ on $H^1(M;\R)$ \cite{thurston1986norm} (see also \cite{fried1979fibrations}). This norm has a
polyhedral unit ball $B$ with the following properties:
\begin{enumerate}
  \item Every cohomology class dual to a fiber is in the
    cone $\R_+\FF$ over a top-dimensional open face $\FF$ of $B$.

  \item If $\R_+\FF$ contains a cohomology class dual to a fiber
      then {\em every} primitive integral class in $\R_+\FF$ is dual to a
      fiber. $\FF$ is called a {\em fibered face} and its primitive integral
      classes are called fibered classes.

  \item For a fibered class $\omega$ with associated fiber $F$, 
         $||\omega||=-\chi(F)$.
\end{enumerate}
In particular if $\dim H^1(M;\R)\ge 2$ and $M$ is fibered then there
are infinitely many fibrations, with fibers of arbitrarily large
complexity.
We will abuse terminology by saying that a fiber (rather than
its Poincar\'e dual) is in $\R_+\FF$. 

The fibered faces also organize the suspension flows and the
stable/unstable foliations: If $\FF$ is a fibered face then there is a
single flow $\psi$ and a single pair $\Lambda^\pm$ of foliations whose leaves
are invariant by $\psi$, such that {\em every} fibration 
associated to $\R_+\FF$ may be isotoped so that its suspension flow is
$\psi$ up to a reparameterization, and the foliations $\lambda^\pm$ for the monodromy of its fiber $F$ are
$\Lambda^\pm\intersect F$.
These results were proven by Fried \cite{fried1982geometry}; see also
McMullen \cite{mcmullen2000polynomial}.  

%

Finally, we note that  the veering triangulation of $\cM$, like the flow itself, is an invariant of the fibered face containing the fiber $\cS$ (see \cite{agol-overflow} or \cite[Proposition 2.7]{veering1}).

\subsection{Subsurfaces, $q$--compatibility, and $\tau$--compatibility} \label{sec:compatibility}
We conclude this section by reviewing some essential constructions from \cite{veering1} and direct the reader there for the full details. In short, the idea is that if $Y$ is a subsurface of $(X,q)$ with $d_Y(\lambda^-,\lambda^+)$ sufficiently large, then $Y$ has particularly nice forms; the first with respect to the $q$-metric, and the second with respect to $\tau$. 

Let $Y \subset X$ be an essential compact subsurface, and let
$X_Y=\til X/\pi_1(Y)$ be the associated cover of $X$. 
We say a boundary component of $Y$ is {\em puncture-parallel} if it bounds a
disk in $\overline X \ssm Y$ that contains a single point of $\PP$. We
denote the corresponding subset of $\PP$ by $\PP_Y$ and refer to them
as the \emph{punctures} of $Y$. Let $\til{\PP}_Y$ denote the subset of
punctures of $X_Y$ which are encircled by the boundary components of
the lift of $Y$ to $X_Y$. 
In terms of the completed space $\overline X_Y$, $\til \PP_Y$ is exactly the set of completion points which have finite total angle.  
Let  $\boundary_0Y$ denote the union of the puncture-parallel components of
$\boundary Y$ and let $\boundary'Y$ denote the rest. Observe that the 
components of $\boundary_0 Y$ are in natural bijection with $\PP_Y$ and set
$Y' = Y\ssm\boundary_0Y$. 

Identifying $\til X$ with $\HH^2$, 
let $\Lambda\subset \boundary\HH^2$ be the limit set of
$\pi_1(Y)$, $\Omega = 
\boundary\HH^2 \ssm \Lambda$, and $\widehat\PP_Y\subset \Lambda$
the set of parabolic fixed points of $\pi_1(Y)$.
Let $C(X_Y)$ denote the compactification of $X_Y$ given by
$(\HH^2 \union \Omega\union \widehat\PP_Y)/\pi_1(Y)$, adding a point for each 
puncture-parallel end of $X_Y$, and a circle for each of the other
ends. 

%
%
%
%

\subsubsection*{$q$-convex hulls}  

As above, identify $\til X$ with $\HH^2$. Let 
$\Lambda \subset \partial \HH^2$ be a closed set and let 
$\CH(\Lambda)$ be the convex hull of $\Lambda$ in $\HH^2$. Using the results of
\cite[Section 2.3]{veering1}, we define the $q$-convex hull
$\CH_q(\Lambda)$ as follows.

Assume first that $\Lambda$ has at least 3 points.
Each boundary geodesic $l$ of $\CH(\Lambda)$ has the same
endpoints as a
(biinfinite) $q$-geodesic $l_q$ in $\widehat X$ (we note that $l_q$ may meet $\partial
\HH^2$ at interior points).
Further, $l_q$ is
unique unless it is part of a parallel family of geodesics, making a
Euclidean strip.

$\widehat X$ is divided by $l_q$ into two sides, 
and one of the sides, which we call  
$\side{l}$, meets $\partial \HH^2$
 in a subset of the complement of $\Lambda$.
The side $\side{l}$ is either a disk
or a string of disks attached along completion
points.
If $l_q$ is one of 
a parallel family of geodesics, we include this family in $\side{l}$.
After deleting from $\widehat X$ the interiors of $\side{l}$ for all $l$ 
in $\boundary \CH(\Lambda)$,
we obtain $\CH_q(\Lambda)$, the
$q$-convex hull. 

If $\Lambda$ has 2 points then $\CH_q(\Lambda)$ is the closed Euclidean strip
formed by the union of $q$-geodesics joining those two points.

\medskip

Now fixing a subsurface $Y$ we can define a $q$-convex hull for the
cover $X_Y$ by taking a quotient of the $q$-convex hull $\CH_q(\Lambda_Y)$
 of the limit
set $\Lambda_Y$ of $\pi_1(Y)$. This quotient, which we will denote by
$\CH_q(X_Y)$, lies in the completion $\overline{X}_Y$.
We remark that in general  $\CH_q(X_Y)$ may be a total mess, e.g. it may have empty interior.

%
%

\subsubsection*{$q$--compatibility}
Let $\widehat\iota:Y \to X_Y$ be the lift of the inclusion map to the
cover.
We say that the subsurface $Y$ of $X$ is \define{$q$-compatible} if the interior of $\CH_q(\Lambda_Y)$ is a disk. In this case, \cite[Lemma 2.6]{veering1} implies that
$\widehat \iota \colon Y \to X_Y$ is homotopic to a map $\widehat \iota_q \colon Y \to \overline X_Y$ which restricts to a homeomorphism from $Y \ssm \partial_0 Y$ to 
\begin{align}
Y_q =\CH_q(X_Y) \ssm \til \PP_Y
\end{align}
 
 \realfig{Y_q_cartoon_2}{The image of a $q$-compatible subsurface $Y$ in $\overline X_Y$ under $\widehat \iota_q$. Open circles are points of $\til \PP_Y$ (corresponding to the image of $\partial_0 Y$) and dots are singularities not contained in $\til \PP_Y$. The ideal boundary of $X_Y$ is in blue.}

We recall that by \cite[Lemma 5.1]{veering1}, if $Y$ is $q$-compatible, then 
  \begin{enumerate}
    \item the projection $\iota_q \colon Y \to \overline X$ of $\widehat
      \iota_q$ to $\overline X$ is an embedding from $\int(Y)$ into $X$
      which is homotopic to the inclusion, and
      \item $\widehat\iota_q(\boundary Y \ssm \boundary_0Y )$ does not pass through points of $\til\PP_Y$.
  \end{enumerate}
See \Cref{Y_q_cartoon_2}. The embedded image $\iota_q(\int(Y))$ is an open representative of $Y$ in $X$ and is denoted $\int_q(Y)$.

The following is our main tool for proving $q$-compatibility; it is \cite[Proposition 5.2]{veering1}.

\begin{proposition}[$q$-Compatibility] \label{prop: q_compatible}
Let $Y\subset X$ be an essential subsurface. 
\begin{enumerate}
\item If $Y$ is nonannular and $d_Y(\lambda^-,\lambda^+) \ge 3$,  then $Y$ is $q$-compatible. 
\item If $Y$ is an annulus and $d_Y(\lambda^-,\lambda^+) \ge 4$, then
$Y$ is $q$-compatible. In this case, $\int_q(Y)$ is a flat cylinder. 
\end{enumerate}
\end{proposition}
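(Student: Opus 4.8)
The plan is to argue the contrapositive in each case: if $\int\CH_q(\Lambda_Y)$ is not a disk, then $d_Y(\lambda^-,\lambda^+)\le 2$ when $Y$ is nonannular, and $d_Y(\lambda^-,\lambda^+)\le 3$ when $Y$ is an annulus. Note first that the hypothesis $d_Y(\lambda^-,\lambda^+)\ge 3$ (resp.\ $\ge 4$) already forces both $\pi_Y(\lambda^+)$ and $\pi_Y(\lambda^-)$ to be nonempty, since the projection of a single lamination has uniformly bounded diameter. The key preliminary step is to classify the ways $\CH_q(\Lambda_Y)$ can degenerate. Since $Y$ is essential and nonannular, $\pi_1(Y)$ is nonelementary, so $\CH(\Lambda_Y)$ is a genuine two-dimensional hyperbolically convex region and $\CH_q(\Lambda_Y)$ is obtained from the completion $\widehat X$ by deleting the open sides $\side{l}$ over the boundary geodesics $l$ of $\CH(\Lambda_Y)$. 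In particular $\int\CH_q(\Lambda_Y)$ is simply connected, and the only obstructions to its being a disk are: (a) it has empty interior, so $\CH_q(\Lambda_Y)$ is at most one-dimensional; or (b) it is a nontrivial ``string of disks'', i.e.\ some completion point $s$ of $\widehat X$ lying in its frontier --- necessarily a singularity of $q$ --- separates it. When $Y$ is an annulus, $\Lambda_Y$ is two points and $\CH_q(\Lambda_Y)$ is the flat strip swept out by the $q$-geodesics between them; failure to be a disk means this strip is degenerate, i.e.\ there is a unique such $q$-geodesic $g$, representing the core curve $c$ of $Y$.

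Next I would convert each degeneracy into a curve or arc of $\A(Y)$ disjoint from representatives of both $\pi_Y(\lambda^+)$ and $\pi_Y(\lambda^-)$; then, since a pair of vertices of $\A(Y)$ at distance $\ge 3$ must fill $Y$, the existence of such a curve gives the bound. The tool here is the dictionary from \cite[Section 2.3]{veering1} between $q$-geodesic representatives and subsurface projections: after lifting to $X_Y$, the leaves of $\lambda^\pm$ realizing $\pi_Y(\lambda^\pm)$ can be isotoped to vertical/horizontal $q$-geodesics lying in $\CH_q(X_Y)$. In case (a), these geodesics are confined to a one-complex $T\subset\overline X_Y$, which together with the punctures $\til\PP_Y$ carries a homotopy model of $Y$ of strictly smaller complexity than $Y$; one then reads off a boundary curve or spanning arc of a regular neighborhood of $T$ that is essential in $Y$ and disjoint from all these leaves, so $\pi_Y(\lambda^+)\cup\pi_Y(\lambda^-)$ does not fill $Y$ and $d_Y(\lambda^-,\lambda^+)\le 2$. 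In case (b), a small loop $\gamma$ around the cut point $s$ is essential in $X_Y$; its image in $X$ is an essential curve or arc of $Y$ (it cannot be peripheral, as $s$ separates essential topology), and because $s$ disconnects the hull, every vertical leaf lies on one side of $\gamma$ and every horizontal leaf on the other, away from the finitely many prongs at $s$. After correcting $\gamma$ by a bounded number of elementary moves that absorb those prongs, one obtains a curve/arc disjoint from representatives of both families, again yielding $d_Y(\lambda^-,\lambda^+)\le 2$. For the annular case, uniqueness of the $q$-geodesic $g$ in the class of $c$ pins the twisting of $\lambda^\pm$ around $c$ relative to $g$ to within a bounded amount, so $d_Y(\lambda^-,\lambda^+)\le 3$; conversely, once the strip is nondegenerate it is by definition a flat Euclidean cylinder, so its embedded image $\int_q(Y)$ in $X$ (via \cite[Lemma 5.1]{veering1}) is a flat cylinder, which gives the final sentence.

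The step I expect to be the main obstacle is case (b): verifying precisely that a loop around a pinch point of the $q$-convex hull, after a uniformly bounded correction, simultaneously misses representatives of $\pi_Y(\lambda^+)$ and $\pi_Y(\lambda^-)$. The delicate points are the angle bookkeeping at completion points --- distinguishing which singularities of $\widehat X$ are genuine cut points of $\CH_q(\Lambda_Y)$ from those that get filled in or become punctures of $\til\PP_Y$ --- handling the prong structure of the foliations at $s$ so that the leaves genuinely do not cross the chosen curve, and confirming that the resulting curve is essential rather than peripheral in $Y$. All of the structural input on $\CH_q$, on the map $\widehat\iota_q$, and on the behavior of $q$-geodesics near singularities is available from \cite[Sections 2.3 and 5]{veering1}; the argument amounts to assembling these carefully, and the numerical thresholds $3$ and $4$ emerge from how much ``slack'' the string-of-disks and degenerate-strip configurations cost in the curve graph.
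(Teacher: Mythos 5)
You should first note that the paper does not actually prove this proposition: it is imported verbatim from the authors' earlier work (\cite[Proposition 5.2]{veering1}), and the only content added here is the remark translating constants between the min-distance convention used there and the diameter convention for $d_Y$ used in this paper. So your sketch is really an attempt to reprove the earlier result. Its overall shape --- argue the contrapositive, and in the annular case deduce from a degenerate flat strip that the relative twisting of $\lambda^\pm$ about the core is bounded (which does follow from the fact, used elsewhere in this paper, that vertical and horizontal leaves meet at most once in each component of the complement of the maximal flat annulus) --- is reasonable in outline.

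However, both nonannular cases have genuine gaps at exactly the step that is supposed to produce a vertex of $\A(Y)$ disjoint from both projections. In your case (b), a cut point $s$ of $\CH_q$ is either an interior singularity of $X_Y$ (in which case a small loop around it bounds a disk and is inessential in the surface, regardless of the fact that $s$ separates the hull), a point of $\til\PP_Y$ (in which case the loop is puncture-parallel, hence again not essential), or an infinite-angle completion point of $\overline X_Y$ (in which case no small closed loop encircles it at all); so the assertion that ``a small loop around the cut point is essential'' is false, and the claim that all vertical leaves lie on one side of this loop and all horizontal leaves on the other has no justification --- leaves of \emph{both} foliations cross into both pieces of the hull adjacent to $s$, so no separation of $\pi_Y(\lambda^+)$ from $\pi_Y(\lambda^-)$ results. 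In your case (a), the essential arcs defining $\pi_Y(\lambda^\pm)$ cross the core of $X_Y$ and exit it, so they cannot be ``isotoped to $q$-geodesics lying in $\CH_q(X_Y)$'' when the hull is one-dimensional; moreover, since the hull is built $\pi_1(Y)$-equivariantly from the entire limit set, its quotient graph is a spine carrying all of $\pi_1(Y)$ (not a ``model of strictly smaller complexity''), so the boundary of its regular neighborhood is peripheral and your construction does not yield an essential curve disjoint from the projections. Thus the mechanism forcing $d_Y(\lambda^-,\lambda^+)\le 2$ is missing in both nonannular degenerations (and the dichotomy (a)/(b) is itself only asserted); identifying case (b) as ``the main obstacle'' does not fill this gap.
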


We remark that the constants in \cite[Proposition 5.2]{veering1} are slightly different since there $d_Y$ was defined to be the minimal distance between projections.

\subsubsection*{$\tau$--compatibility} \label{sec:tau_hull}
Next we focus on compatibility with respect to the veering triangulation.
Call a $q$-compatible subsurface $Y \subset X$ \define{$\tau$-compatible} if the map $\widehat\iota_q
\colon Y \to \overline X_Y$ is homotopic rel $\partial_0 Y$ to a map
$\widehat\iota_\tau:Y\to \overline X_Y$ which is an embedding on $Y' = Y\ssm \partial_0Y$ such that
\begin{enumerate}
\item $\widehat\iota_\tau$ takes each component of  $\boundary'Y = \partial Y \ssm \partial_0 Y$ to a simple curve in $\overline X_Y \ssm \til{\PP}_Y$ composed of
a union of $\tau$-edges and 
  \item the map $\iota_\tau \colon Y \to \overline X$ obtained by composing
    $\widehat \iota_\tau$ with $\overline X_Y \to \overline X$ restricts to an embedding from $\int(Y)$ into $X$.
\end{enumerate}

When the subsurface $Y$ is $\tau$-compatible, we set 
\begin{align}
\partial_\tau Y \equiv \iota_\tau(\partial'Y)
\end{align}
which is a collection of $\tau$-edges with disjoint interiors. We call $\partial_\tau Y$
the \emph{$\tau$--boundary} of $Y$ and consider it as a $1$-complex of $\tau$-edges in
$X$. (At times we will also think of $\partial_\tau Y$ as a collection of disjoint $\tau$-edges in the fully-punctured surface $\cX$.) Similar to the situation of a $q$-compatible subsurface, if $Y$ is $\tau$-compatible then one component of $X \ssm \partial_\tau Y$ is an open subsurface isotopic to the interior of $Y$; this is the image $\iota_\tau(\int(Y))$ and is denoted $\int_\tau (Y)$. 
For future reference, we set $Y_\tau \subset X_Y$ to be the intersection of $X_Y$ with the image of $\widehat \iota_\tau$. 
By definition, the covering $X_Y \to X$ maps the interior of $Y_\tau$ homeomorphically onto $\int_\tau(Y)$.

The following result is Theorem 5.3 of \cite{veering1}.

\begin{theorem}[$\tau$-Compatibility]\label{thm: tau-compatible}
Let $Y\subset X$ be an essential subsurface. 
\begin{enumerate}
\item If $Y$ is nonannular and $d_Y(\lambda^-,\lambda^+) \ge 3$, 
 then $Y$ is $\tau$-compatible.
 \item If $Y$ is an annulus and $d_Y(\lambda^-,\lambda^+) \ge 4$, then
$Y$ is $\tau$-compatible. 
\end{enumerate}
\end{theorem}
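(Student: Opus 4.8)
The plan is to start from the $q$-geodesic picture furnished by \Cref{prop: q_compatible} --- whose hypotheses coincide with ours --- and to \emph{upgrade} the boundary of the relevant region from $q$-geodesics to paths of $\tau$-edges; no stronger lower bound on $d_Y(\lambda^-,\lambda^+)$ should be needed, and the entire content is this upgrade. So I would assume $Y$ is $q$-compatible, set $D=\CH_q(\Lambda_Y)\subset\widehat X$ (disk interior by hypothesis), and consider its quotient $\CH_q(X_Y)\subset\overline X_Y$. By \cite[Lemma 2.6]{veering1} the lifted inclusion $\widehat\iota_q$ restricts to a homeomorphism $Y\ssm\partial_0Y\to Y_q=\CH_q(X_Y)\ssm\til\PP_Y$, and by \cite[Lemma 5.1]{veering1} the projection $\iota_q$ already embeds $\int(Y)$ into $X$. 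The boundary of $Y_q$ splits into the small loops around the points of $\til\PP_Y$ --- these are $\widehat\iota_q(\partial_0Y)$ and will be kept fixed --- and the ``outer'' boundary, the union of the images of the components of $\partial'Y$; each such component lifts in $\HH^2$ to a bi-infinite, cyclically invariant concatenation of finitely many (mod the cyclic stabilizer) $q$-geodesic segments running along $\partial D$ and meeting at singularities not in $\til\PP_Y$. Everything then reduces to replacing this outer boundary by a $\pi_1(Y)$-invariant $1$-complex of $\tau$-edges whose quotient is a disjoint union of simple loops.

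The key step, which I would isolate as a lemma, is a staircase replacement: any $q$-geodesic segment $\sigma$ of the outer boundary of $D$, oriented with $D$ on a preferred side, is homotopic rel endpoints --- inside $D$ and missing $\til\PP_Y$ --- to a concatenation $\hat\sigma$ of $\tau$-edges, and $\hat\sigma$ can be taken ``innermost'' so that the region between $\sigma$ and $\hat\sigma$ meets no point of $\til\PP_Y$. To build $\hat\sigma$ one pushes $\sigma$ slightly into $D$ and repeatedly selects the maximal singularity-free rectangle meeting the pushed copy from the $D$-side: by definition its diagonal is a $\tau$-edge, its four edge-singularities supply the corners of a stair, and marching along $\sigma$ while concatenating these diagonals produces $\hat\sigma$, finite per period since $Y$ is compact. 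Doing this for every outer segment and taking the $\pi_1(Y)$-orbit yields the desired $\tau$-edge $1$-complex; its quotient consists of disjoint simple loops, one per component of $\partial'Y$, since the innermost choice keeps a single stair from crossing itself and lets staircases on disjoint boundary segments be isotoped into disjoint collars.

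I would then define $\widehat\iota_\tau$ by altering $\widehat\iota_q$ only inside disjoint collars of the components of $\partial'Y$, homotoping each outer boundary component across its staircase region onto the corresponding $\tau$-edge loop while fixing $\partial_0Y$. Because these alterations are supported in disjoint collars and produce embedded, pairwise disjoint boundary loops off $\til\PP_Y$, the map $\widehat\iota_\tau$ remains an embedding on $Y'=Y\ssm\partial_0Y$ and sends each component of $\partial'Y$ to a simple $\tau$-edge curve in $\overline X_Y\ssm\til\PP_Y$; this is condition (1), and $\partial_\tau Y$ is then a union of $\tau$-edges with disjoint interiors. For condition (2), $\int(Y_\tau)$ is obtained from the embedded copy $\int_q(Y)\subset X$ by pushing its outer boundary across staircase regions assembled from maximal singularity-free rectangles adjacent to $\partial\CH_q(\Lambda_Y)$; such rectangles project injectively to $\overline X$ by $q$-convexity, so the push stays in a part of $X_Y$ mapping homeomorphically onto $X$, and $\iota_\tau$ still embeds $\int(Y)$ onto an open subsurface isotopic to it. The annular case runs along the same lines: by \Cref{prop: q_compatible}(2), $\CH_q(\Lambda_Y)$ is a Euclidean strip and $\int_q(Y)$ a flat cylinder, and each of its two $q$-geodesic boundary lines is replaced by a staircase; the stronger bound $d_Y(\lambda^-,\lambda^+)\ge4$ is exactly what excludes a degenerate cylinder and guarantees the two staircases are distinct.

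The hard part will be the staircase replacement lemma together with its consequences: proving that the innermost $\tau$-edge path exists and is finite per period, that it lies in $D$ and avoids $\til\PP_Y$, that it is embedded with disjoint realizations on disjoint boundary segments, and --- crucial for condition (2) --- that the staircase regions project injectively to $X$. This amounts to a careful local analysis of the maximal singularity-free rectangles meeting $\partial\CH_q(\Lambda_Y)$, which is exactly the technical work carried out in \cite[Section~5]{veering1}.
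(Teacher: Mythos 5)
You are not being compared against an internal argument here: the paper does not prove \Cref{thm: tau-compatible} at all, but imports it verbatim as Theorem 5.3 of \cite{veering1}, and only summarizes the underlying construction (the two-step inner/outer $\mathbf{t}$-hull procedure pictured in \Cref{fig:thull-twice}). Your overall strategy --- start from \Cref{prop: q_compatible} and upgrade the $q$-geodesic boundary of the hull to a boundary made of $\tau$-edges --- is indeed the strategy of that cited proof, but your write-up has a genuine gap: its entire mathematical content is the ``staircase replacement lemma,'' which you do not prove and, in your last paragraph, explicitly defer to Section 5 of \cite{veering1}. As a standalone proof, nothing beyond the (correct) reduction to that lemma is actually established.

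Moreover, the specific form you give the lemma --- an \emph{innermost} concatenation of $\tau$-edges homotopic to the boundary \emph{inside} $D=\CH_q(\Lambda_Y)$, with the homotopy region contained in $D$ --- is not how the construction can go, and it fails outright for annuli. When $Y$ is an annulus, $\int_q(Y)$ is a flat cylinder whose interior contains no singularities, so a path of $\tau$-edges (whose vertices are singularities) homotopic to the core cannot run through that interior; the replacement necessarily pushes \emph{outward}, as this paper itself records in the proof of \Cref{lem:overlap_tau} ($\int_q(Y)\subset\int_\tau(Y)$, citing Remark 5.4 of \cite{veering1}). So the role you assign to the hypothesis $d_Y(\lambda^-,\lambda^+)\ge 4$ (``excluding a degenerate cylinder'') misplaces where the work is. In the nonannular case the actual construction is likewise not a single inward push but the inner-then-outer $\mathbf{t}$-hull procedure of \Cref{fig:thull-twice}, and your justification of condition (2) --- that the relevant maximal rectangles ``project injectively to $\overline X$ by $q$-convexity'' --- is unsupported: the paper emphasizes that $\int_\tau(Y)$, unlike $\int_q(Y)$, is \emph{not} $q$-convex, and the embeddedness of $\iota_\tau(\int(Y))$ in $X$ is precisely the delicate point rather than a consequence of convexity.
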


The comment following \Cref{prop: q_compatible} also applies here.


\section{Veering triangulations and subsurfaces}
\label{sec:veering}

Here we study the connection between sections of the bundle $\N \to \cX$ and projections to subsurfaces of $X$. In brief, we tailor the theory of subsurface projections to the veering structure. This is accomplished in \Cref{lem:overlap_tau} and \Cref{prop:proj_close}.

\subsection{Sections of the veering triangulation}\label{sec:sections}
A \define{section} of the veering triangulation $\tau$  in $\N$ is a simplicial
embedding $s \colon (\cX,\Delta) \to \N$ 
that is a section of the
fibration $\Pi \colon \N \to \cX$. 
Here, $\Delta$ is an ideal triangulation of $\cX$, which by construction consists of $\tau$--edges.
We will also refer to the image of $s$ in $\N$, which 
we often denote by $T$, as a section. 

There is a bijective correspondence between sections of $\N$ and
ideal triangulations of $\cX$ by $\tau$--edges. More generally, 
we use the notation $\Pi_*$ to denote the map that associates to any
subcomplex of a section the corresponding union of $\tau$--simplices
of $\cX$, and we use $\Pi^*$ to denote its inverse. In particular, if
$K$ is a union of disjoint $\tau$--edges of $\cX$, $\Pi^*(K)$ is the
subcomplex of $\N$ obtained by lifting its simplices to $\N$. 
Note that $T$ and $T'$ differ by a tetrahedron move in $\N$ if and only if the ideal triangulations $\Pi_*(T)$ and $\Pi_*(T')$ differ by a diagonal exchange. Here, an upward (downward) tetrahedron move on a section $T$ replaces two adjacent faces at the bottom (top) of a tetrahedron with the two adjacent top (bottom) faces.


Since the fibers of $\Pi \colon \N \to \cX$ give $\N$ an oriented foliation by lines and each of these lines meets each section exactly once, we have the following observation: For each $x \in \N$ and each section $T$ of $\N$, it makes sense to write $x \le T$ or $x \ge T$ depending on whether $x$ lies weakly below or above $T$ along the orientation of the line through $x$. (Here, $x \le T$ and $x \ge T$ imply that $x \in T$.) In fact, this ordering extends to each simplex of $\N$; we write $\sigma \le T$ if $x \le T$ for each $x \in \sigma$. Since we will use this fiberwise ordering for several (simplicial) constructions, it is important to note that it is consistent along simplices; that is,
if $x \le T$ and $\sigma$ is the smallest simplex containing $x$, then $\sigma \le T$.
Finally, if $K$ is a subcomplex of $\N$, then $K \le T$ if for each simplex $\sigma$ of $K$, $\sigma \le T$.

In \cite[Section 2.1]{veering1} we define a strict partial order among $\tau$-edges using their
spanning rectangles: if $e$ crosses $f$ we say that $e > f$ if $e$
crosses the spanning rectangle of $f$ from top to bottom, and $f$
crosses the spanning rectangle of $e$ from left to right (i.e. if the 
slope of $e$ is greater than the slope of $f$). A priori, this partial order is defined in the universal cover $\wt X$, but it projects consistently to $\cX$ and so defines a partial order of $\tau$-edges there as well.
See \Cref{fig:greater}.

\begin{figure}[htbp]
\begin{center}
\includegraphics[]{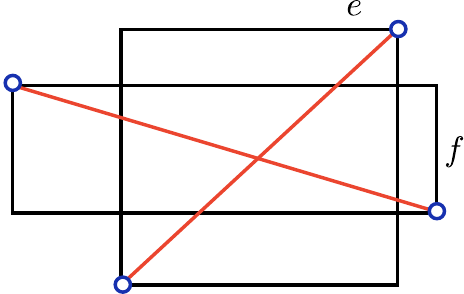}
\caption{Two $\tau$-edges with $e>f$.}
\label{fig:greater}
\end{center}
\end{figure}

This definition is
consistent with the ordering of the simplices $\Pi^*(e)$, $\Pi^*(f)$  in $\N$, and in particular
\begin{lemma} \label{lem:up_is_up}
$K \le T$ if and only if whenever an edge $e$ of $\Pi_*(T)$ crosses an edge $f$ of $\Pi_*(K)$, we have $f < e$.
\end{lemma}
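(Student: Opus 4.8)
The plan is to bootstrap from the basic compatibility between the slope order on $\tau$--edges and the positions of their canonical lifts in $\N$, established in \cite[Section 2.1]{veering1}, up to the relation between an arbitrary subcomplex and a section. I will use three inputs: (i) if $\tau$--edges $a,b$ of $\cX$ cross at a point $x$, then their lifts $\Pi^*(a),\Pi^*(b)$ are disjoint in $\N$, and $a<b$ exactly when $\Pi^*(a)$ passes below $\Pi^*(b)$ over $x$; (ii) for an edge $e$ of the triangulation $\Pi_*(T)$, the lift $\Pi^*(e)$ is an edge of $T$, so $T$ and $\Pi^*(e)$ meet the fiber $\Pi^{-1}(x)$ in the same point for every $x\in e$; and (iii) $\tau$ is a genuine simplicial complex on $\N$, so two of its simplices intersect in a common face. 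I also use the consistency-along-simplices property quoted just before the lemma: if $\sigma\not\le T$, then the relative interior of $\sigma$ lies strictly above $T$.

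The forward direction is immediate from (i) and (ii). If $K\le T$ and an edge $e$ of $\Pi_*(T)$ crosses an edge $f$ of $\Pi_*(K)$ at a point $x$, then $\Pi^*(f)$ is an edge of $K$, hence $\Pi^*(f)\le T$, so over $x$ the point of $\Pi^*(f)$ is weakly below $T$, which over $x$ coincides with $\Pi^*(e)$; by (i) these two points differ, so the inequality is strict and $f<e$.

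For the backward direction, assume $f<e$ whenever $e\in\Pi_*(T)$ crosses $f\in\Pi_*(K)$. The heart of the argument is to show that every edge $\Pi^*(f)$ of $K$ satisfies $\Pi^*(f)\le T$. If $f$ is an edge of $\Pi_*(T)$ this is (ii). Otherwise $f$ is not isotopic into a single triangle of $\Pi_*(T)$, so it crosses the $1$--skeleton of $\Pi_*(T)$ transversally and is cut into segments, each lying in a single open triangle $\mathring\Delta_j$ of $\Pi_*(T)$. At each crossing point $x_i$ of $f$ with an edge $e_i$ of $\Pi_*(T)$, the hypothesis $f<e_i$ together with (i) and (ii) shows that $\Pi^*(f)$ passes strictly below $T$ over $x_i$. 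Over a segment inside $\mathring\Delta_j$ the section $T$ agrees with the single simplex $\Pi^*(\Delta_j)$ of $\N$; since $\Pi^*(f)$ lies below this simplex at both ends of the segment and, by (iii) together with $f\notin\Pi_*(T)$, cannot cross it in between, $\Pi^*(f)$ stays below $T$ over the whole segment, hence over all of $f$. One then promotes this from the $1$--skeleton of $K$ to all of $K$ using consistency-along-simplices: a $2$--simplex $\sigma$ of $K$ with $\partial\sigma\le T$ but $\sigma\not\le T$ would have its relative interior strictly above $T$ and its boundary edges weakly below $T$, which an intermediate-value argument rules out (in the degenerate case where all three boundary edges lie in $T$, (iii) forces $\sigma$ itself to be a triangle of $T$); and a tetrahedron $\sigma$ of $K$ with $\partial\sigma\le T$ but $\sigma\not\le T$ is excluded because consistency-along-simplices would force $T$ to coincide with the ``floor'' of $\sigma$, hence to contain its bottom edge $\beta$, whereupon the crossing hypothesis applied to $\beta$ and the top edge of $\sigma$ --- the two diagonals of the veering tetrahedron --- contradicts (i).

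The main obstacle is precisely the edge case above: converting the combinatorial hypothesis ``$f<e$ at every crossing'' into the global inequality $\Pi^*(f)\le T$. The substantive point is that $\Pi^*(f)$ cannot tunnel through the section $T$ between consecutive crossings with $\Pi_*(T)$, which I expect to settle cleanly using simpliciality of $\tau$ (input (iii)); the higher-dimensional cases then follow formally from consistency-along-simplices, with only the tetrahedral case needing the observation that the top and bottom edges of a veering tetrahedron project to the two crossing diagonals.
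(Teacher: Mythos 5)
Your proof is essentially correct, but it takes a genuinely different route from the paper. The paper deduces the hard (``if'') direction from the diagonal-exchange machinery of \cite{veering1}: using \cite[Lemma 3.4]{veering1} and the connectivity results of \cite[Section 3]{veering1}, one flips $\Pi_*(T)$ \emph{downward} until the edge $f$ appears, which exhibits a descending chain of sections from $T$ to one containing $\Pi^*(f)$ and hence gives $\Pi^*(f)\le T$ directly; the paper then treats only edges and leaves the (routine) promotion to higher simplices implicit. You instead argue fiberwise in $\N$: anchoring strict ``belowness'' at each crossing point via the slope/height consistency, and then using the fact that $\Pi^*(f)$ cannot meet $T$ away from crossings together with continuity of the height difference along $f$. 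That core idea is sound and arguably more elementary --- in fact it can be streamlined: if $f\notin\Pi_*(T)$ then $\int\Pi^*(f)$ is disjoint from $T$ outright (distinct open cells of the ideal triangulation are disjoint), so the height difference has constant sign along all of $f$ and one crossing point settles it; your segment-by-segment bookkeeping, and the ``below at both ends'' phrasing, are unnecessary (and at a terminal segment ending at an ideal vertex only one end is anchored anyway). What the paper's route buys is that it reuses already-established flipping lemmas and produces the monotone path of sections as a byproduct; what yours buys is independence from that machinery.

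Two caveats. First, your input (iii) --- that $\tau$ on $\N$ is a genuine simplicial complex, with simplices meeting in common faces --- is stronger than what is established (ideal/veering triangulations are in general only $\Delta$-complexes), but everything you actually use can be recovered from weaker facts that do hold here: interiors of distinct cells are disjoint, and $\Pi_*$ is a bijection between cells of a section and $\tau$-cells of $\cX$ (so, e.g., a face all of whose edges lie in $T$ must be the face of $T$ over that projected triangle). You should cite those facts rather than strict simpliciality. Second, your tetrahedron step is both unnecessary and shaky as written: once every $2$--face of a tetrahedron $\sigma\subset K$ is $\le T$, the fiberwise top of $\sigma$ lies on its top faces, so $\sigma\le T$ immediately; the argument you sketch (that $T$ would have to coincide with the floor of $\sigma$, then a contradiction via the two diagonals) is not justified and is not needed.
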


\begin{proof}
First, let $f$ be any $\tau$-edge in $\cX$ and let $\Delta$ be a triangulation of $\cX$ by $\tau$-edges. By \cite[Lemma 3.4]{veering1}, if $e$ is an edge of $\Delta$ and $e>f$, then there is an edge of $\Delta$ crossing $f$ which is downward flippable, meaning that there is a diagonal exchange of $\Delta$ replacing it with an edge of smaller slope. (An analogous statement holds if $f>e$.) Such a diagonal exchange results in a triangulation $\Delta'$ either containing $f$ or still containing an edge $e'$ with $e' >f$. After finitely many downward diagonal exchanges, we arrive at a triangulation by $\tau$-edges which contains the edge $f$. (See \cite[Section 3]{veering1} for details.)
Translating this statement to $\N$, this means that starting with $\Pi^*(\Delta)$ there is a sequence of downward tetrahedron moves resulting in a section containing $\Pi^*(f)$. Hence, $\Pi^*(f) \le \Pi^*(\Delta)$

This, together with the corresponding result for when $K\ge T$, implies the lemma.
\end{proof}

Given sections $T_1$ and $T_2$, we use $U(T_1,T_2)$ to denote the subcomplex of $\N$ between them. Formally, $U(T_1,T_2)$ is the subcomplex of $\N$ which is the union of all simplices $\sigma$ such that either $T_1 \le \sigma \le T_2$ or $T_2 \le \sigma \le T_1$. 

It will be helpful to consider the lattice structure of sections. For sections $T_1,T_2$, we denote their fiberwise \define{maximum} by $T_1 \vmax T_2$. If we name the oriented fiber containing $x$ by $l_x$, this is the subset $\{x \in \N : x = \max_l \{l_x \cap T_1, l_x\cap T_2\}\}$, where the max is taken with respect to the ordering on each $l_x$.
\begin{lemma}
$T_1 \vmax T_2$ is a section.
\end{lemma}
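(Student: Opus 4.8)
The statement to prove is that $T_1 \vmax T_2$ is again a section of $\N \to \cX$. The plan is to show that $T_1 \vmax T_2$ is a subcomplex of $\N$ (i.e.\ a union of $\tau$-simplices) which meets each fiber $l_x$ in exactly one point, and then deduce from the bijective correspondence between sections and ideal triangulations of $\cX$ by $\tau$-edges that it is actually a simplicial section. The set-theoretic fact that $T_1 \vmax T_2$ meets each fiber once is immediate from the definition: on each oriented fiber $l_x$, it picks out the larger of the two points $l_x \cap T_1$ and $l_x \cap T_2$. So the content is in showing $T_1 \vmax T_2$ is a subcomplex, and in particular that it is ``simplicial'' — no fiber meets it in the interior of a higher-dimensional simplex.

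First I would use the consistency of the fiberwise order along simplices (recorded in the excerpt: if $x \le T$ and $\sigma$ is the smallest simplex containing $x$ then $\sigma \le T$). Fix a simplex $\sigma$ of $\N$. I claim $\sigma \le T_1 \vmax T_2$ if and only if $\sigma \le T_1$ and $\sigma \le T_2$, and dually $\sigma \ge T_1 \vmax T_2$ iff $\sigma \ge T_1$ or $\sigma \ge T_2$ — these follow by checking the ordering pointwise on $\sigma$ and using that $\le$ and $\ge$ are consistent along simplices. Then, for a simplex $\sigma$ with $\sigma \le T_1$ and $\sigma \le T_2$, I want to show $\sigma$ is either contained in $T_1 \vmax T_2$ or lies strictly below it; the point to rule out is a simplex that crosses $T_1 \vmax T_2$ transversely. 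Here is where I would invoke \Cref{lem:up_is_up}: translating everything to $\tau$-edges via $\Pi_*$, a section $T$ with $K \le T$ is characterized by the crossing condition on slopes. So $\Pi_*(T_1 \vmax T_2)$ should be the ``slope-wise maximum'' triangulation, and the key combinatorial input is that for a $\tau$-edge $f$, if $f < e_1$ fails for some edge $e_1$ of $\Pi_*(T_1)$ then $f$ crosses $e_1$ with $f > e_1$, and similarly for $T_2$; I need that the two conditions assemble to give a well-defined triangulation lying weakly above both.

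The cleanest route to actually proving it is a section may be to argue by a tetrahedron-move / flip argument, mirroring the proof of \Cref{lem:up_is_up}: starting from $T_1$, perform only \emph{upward} tetrahedron moves, each of which stays weakly above $T_1$; I claim one can reach a section lying weakly above both $T_1$ and $T_2$ and weakly below every section with that property, and this minimal such section must equal the fiberwise maximum. Concretely, whenever the current section $T$ has an edge $e$ with $e < f$ for some edge $f$ of $\Pi_*(T_2)$ — i.e.\ $T$ dips strictly below $T_2$ somewhere — \cite[Lemma 3.4]{veering1} provides an \emph{upward} flippable edge of $T$ crossing $f$, and performing that flip keeps us weakly above $T_1$ (upward moves only increase) while making progress toward being above $T_2$. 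As in the cited lemma, finiteness of this process — here one needs that only finitely many $\tau$-edges lie between $T_1$ and $T_2$ in slope, which follows because $U(T_1,T_2)$ is a finite complex away from the cusps, or more carefully from the local finiteness of $\tau$ — yields a section $T_\vmax$ with $T_1 \le T_\vmax$ and $T_2 \le T_\vmax$. By symmetry (or by the dual argument for $\ge$) one sees $T_\vmax$ is weakly below any section dominating both, and comparing on each fiber forces $T_\vmax = T_1 \vmax T_2$; since $T_\vmax$ is a section, so is $T_1 \vmax T_2$.

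The main obstacle I anticipate is the finiteness/termination step: one must be careful that the sequence of upward flips actually terminates, since $\cX$ is noncompact and $\tau$ has infinitely many edges overall. The resolution should be that $U(T_1,T_2)$ — the subcomplex of $\N$ between the two sections, which is exactly where the flips take place — contains only finitely many $\tau$-simplices (up to nothing, i.e.\ genuinely finitely many, because each $\tau$-simplex of $\N$ projects to one of the finitely many $\tau$-simplices of $\cX$ only after quotienting, but two given sections of $\N$ itself bound a finite region). I would state this carefully, perhaps isolating it as: $U(T_1,T_2)$ is a finite complex, hence only finitely many flips separate $T_1$ from $T_2$. A secondary subtlety is making sure the ``fiberwise maximum'' set really is closed and is a subcomplex and not something with fractal boundary; the consistency-along-simplices property quoted in the excerpt is exactly what prevents this, so I would lean on it explicitly when identifying $T_\vmax$ with $T_1 \vmax T_2$ fiber by fiber.
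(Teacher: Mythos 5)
There is a genuine gap, and it sits exactly where you place the weight of your argument. Your actual proof is the flip construction of paragraph three: starting from $T_1$, perform upward tetrahedron moves until you reach a section $T_\vmax$ lying weakly above both $T_1$ and $T_2$, and then conclude $T_\vmax = T_1 \vmax T_2$ because $T_\vmax$ is minimal among dominating sections. But minimality among \emph{sections} only gives $T_\vmax \le T'$ for every section $T'$ with $T' \ge T_1, T_2$; to conclude $T_\vmax$ coincides fiberwise with the pointwise maximum you would need to compare $T_\vmax$ with $T_1 \vmax T_2$ itself, and that comparison is only available if you already know the pointwise maximum is (weakly below) a section --- which is precisely the statement being proved. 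Alternatively you could try to maintain the invariant that the flipped tetrahedra never overshoot the pointwise maximum, but your procedure (flip an upward-flippable edge crossing some $f$ of $\Pi_*(T_2)$ from below) does not ensure this: the flipped edge may cross other edges of $\Pi_*(T_2)$ from above, so the tetrahedron being added need not lie weakly below $T_2$, let alone below $T_1 \vmax T_2$. So as written the final identification is circular. A smaller but real error: your claimed equivalences are backwards --- pointwise, $x \le T_1 \vmax T_2$ iff $x \le T_1$ \emph{or} $x \le T_2$, and $x \ge T_1 \vmax T_2$ iff $x \ge T_1$ \emph{and} $x \ge T_2$; the version with ``and''/``or'' swapped is only a one-way implication. (Your finiteness worry about termination is legitimate but resolvable by the machinery you cite; it is not the main problem.)

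The fix is much shorter than the route you attempted, and you already quote the needed ingredient: consistency of the fiberwise order along simplices. Since $\Pi$ restricted to $T_1 \vmax T_2$ is a homeomorphism onto $\cX$, it suffices to show $T_1 \vmax T_2$ is a subcomplex. Take $x \in T_1 \vmax T_2$ and suppose (without loss of generality) $x \in T_1$; then $x \ge T_2$, so if $\sigma$ is the minimal simplex of $T_1$ containing $x$, consistency along simplices gives $\sigma \ge T_2$, and every point of $\sigma$ is therefore the fiberwise maximum on its fiber, i.e.\ $\sigma \subset T_1 \vmax T_2$. That is the whole argument; no flips, no appeal to \Cref{lem:up_is_up}, and no finiteness of $U(T_1,T_2)$ is needed.
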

\begin{proof}
Since the restriction of $\Pi$ to $T_1 \vmax T_2$ is a homeomorphism to $\cX$, it suffices to show that $T_1 \vmax T_2$ is a subcomplex of $\N$. Let $x \in T_1 \vmax T_2$ and suppose that $x$ is contained in $T_1$. Then $x \ge T_2$ and so if $\sigma$ is the minimal simplex of $T_1$ containing $x$, we see $\sigma \ge T_2$. Hence, $\sigma \subset T_1 \vmax T_2$ and we conclude that $T_1 \vmax T_2$ is indeed a section.
\end{proof}

We can define the \define{minimum} $T_1 \vmin T_2$ of two sections similarly. With this terminology, it makes sense to say that $T_1 \vmax T_2$ is the \define{top} of $U(T_1,T_2)$. More precisely, $T_1 \vmax T_2 \subset U(T_1,T_2)$ and for every simplex $\sigma \subset U(T_1,T_2)$, $\sigma \le T_1 \vmax T_2$. 
Similarly, we say that $T_1 \vmin T_2$ is the \define{bottom} of $U(T_1,T_2)$. Further, using our definitions we see that
\[
U(T_1,T_2)  = U(T_1 \vmin T_2, T_1 \vmax T_2).
\]
Note that the part of $T_1$ that lies above $T_2$ is $T_1 \intersect (T_1 \vmax T_2)$. 


\subsubsection*{Sections through a subcomplex}
Let $E \subset \cX$ be a union of disjoint $\tau$--edges and set $K = \Pi^*(E)$ to be the corresponding subcomplex of $\N$. (Our primary example will be $E =\partial_\tau Y$ for a $\tau$-compatible subsurface $Y$ of $X$. In this situation we think of $\partial_\tau Y$ as a collection of $\tau$-edges of $\cX$.)
We define $T(E) = T(K)$ to be the set of sections of $\N$ which contain $K$ as a subcomplex. Similarly, we define $\Delta(E) = \Delta(K)$ as the set of ideal triangulations of $\cX$ by $\tau$--edges containing $E$.
We recall the following two basic results from \cite{veering1}. 
The first states simply that $T(E)$ is nonempty. It is  \cite[Lemma 3.2]{veering1}.

\begin{lemma}[Extension lemma] \label{lem:extension}
Suppose that $E$ is a collection of
$\tau$-edges in $\cX$  with
pairwise disjoint interiors. Then $T(E)$ is nonempty.
\end{lemma}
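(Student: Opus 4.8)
The plan is to build a triangulation of $\cX$ by $\tau$-edges that contains the given collection $E$, since by the bijective correspondence between sections of $\N$ and $\tau$-triangulations of $\cX$, this is equivalent to producing an element of $T(E)$. First I would observe that since the edges of $E$ have pairwise disjoint interiors and are saddle connections of $q$ (each spanning a singularity-free rectangle in $\widehat X$), they cut $\cX$ into a finite union of complementary pieces. The crux is to show that each such complementary piece, which is a (partially) ideal polygon-like region in the singular flat metric with corners at singularities and sides along $E$ or along the punctures, can itself be triangulated using only $\tau$-edges.

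The key step is a diagonal-filling argument carried out in the universal cover. Given a complementary region, lift it to $\widehat X$; its boundary consists of leaf-transverse saddle connections (lifts of edges of $E$) together with singular vertices. I would then repeatedly add a $\tau$-edge as follows: inside the region, take a maximal singularity-free rectangle $R$ whose interior meets the region and which is anchored at two of the boundary singularities — the diagonal saddle connection of such a maximal rectangle is by definition a $\tau$-edge, and one can arrange (using the partial order on $\tau$-edges and the flippability results of \cite[Section 3]{veering1}, as recalled in the proof of \Cref{lem:up_is_up}) that this new $\tau$-edge lies properly inside the region and is disjoint from the edges already present. Each such addition strictly decreases the combinatorial complexity (e.g. the number of singularities on the boundary of the sub-pieces, or total area), so after finitely many steps every piece is subdivided into regions containing no $\tau$-edge in their interior; these terminal pieces are necessarily single $\tau$-simplices (triangles), giving the desired ideal triangulation $\Delta \supseteq E$. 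Translating back via $\Pi^*$, the section $\Pi^*(\Delta)$ contains $K = \Pi^*(E)$, so $T(E) \neq \emptyset$.

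The main obstacle is establishing that the diagonal-filling process can always be performed using genuine $\tau$-edges — i.e. that from any configuration of disjoint $\tau$-edges that does not yet triangulate, there exists a new $\tau$-edge compatible with it. This is exactly where the veering/flippability machinery of \cite{veering1} is needed: one starts from \emph{some} $\tau$-triangulation (which exists by construction of $\N$) and performs a finite sequence of diagonal exchanges to force the edges of $E$ into it, using the downward-flippability statement \cite[Lemma 3.4]{veering1} to guarantee that an obstructing edge can always be flipped toward $E$. Since everything reduces to the already-cited combinatorial results, I expect this lemma to follow with little additional work beyond carefully invoking \cite[Section 3]{veering1}; the essential content is that the set of $\tau$-edges is rich enough — every maximal singularity-free rectangle contributes one — to triangulate along any prescribed disjoint sub-collection.
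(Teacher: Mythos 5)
Note first that the paper does not prove this statement at all: it is imported verbatim from \cite[Lemma 3.2]{veering1}, and the only related machinery reproduced here is the flip--insertion argument recalled in the proof of \Cref{lem:up_is_up}. Measured against that, your primary construction (cut $\cX$ along $E$ and triangulate each complementary region by repeatedly inserting diagonals of maximal singularity-free rectangles) has its gap exactly at the crucial step. You assert that inside each complementary region one can find a maximal rectangle ``anchored at two boundary singularities'' whose diagonal is a $\tau$-edge lying in the region and disjoint from the edges already present; nothing in the definition of $\tau$-edges guarantees this. A maximal rectangle meeting the region has its four singularities wherever they happen to lie, and its diagonals may cross $E$ or exit the region, while the flippability results you invoke concern edges of an ambient triangulation, not arbitrary complementary polygons, so they do not produce such an edge. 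The termination claim is also shaky (total area does not decrease at all, and a count of boundary singularities needs care), and, most importantly, the assertion that a terminal piece containing no $\tau$-edge in its interior must be a single $\tau$-triangle is unproved: a priori the process could stall at a polygon admitting no $\tau$-edge diagonal whatsoever, and ruling that out is essentially the content of the lemma, so the argument as written begs the question.

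Your fallback paragraph is the right route and is essentially the argument of \cite{veering1}: start from some section (these exist, e.g.\ by Agol's construction, cf.\ \Cref{lem:AG_sec}) and insert the edges of $E$ one at a time by diagonal exchanges, using \cite[Lemma 3.4]{veering1}; there is no circularity, since that lemma concerns a given triangulation and a crossing $\tau$-edge and does not presuppose the extension lemma. But two points must be supplied to make this a proof, and neither appears in your write-up. First, for a fixed $f\in E$, every edge of the current triangulation crossing $f$ lies on the same side of $f$ in the slope order: if $e_1>f$ and $e_2<f$ both crossed $f$, then inside the spanning rectangle of $f$ the segment $e_1$ would run from top to bottom and $e_2$ from left to right, forcing them to intersect in the singularity-free interior of that rectangle, which is impossible for two edges of one triangulation; hence the monotone (say downward) exchanges of \cite[Lemma 3.4]{veering1} apply and terminate with $f$ in the triangulation. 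Second, each such exchange removes only an edge that crosses $f$, and the edges of $E$ already inserted have interiors disjoint from $f$, so they never cross $f$ and are never removed; this is precisely what makes the induction on the number of edges of $E$ go through. With these two observations added the flip-based argument is complete, and the region-by-region triangulation scheme can simply be discarded.
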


The second (\cite[Proposition 3.3]{veering1}) states that $T(K)$ is always connected by tetrahedron
moves. This includes in particular the case of $T(\emptyset)$, the set
of all sections.

\begin{lemma}[Connectivity] \label{lem:connectivity}
  If $E$ is a collection of
$\tau$-edges in $X$  with
pairwise disjoint interiors,
  then $\Delta(E)$ is connected
  via diagonal exchanges. 
  In terms of $\N$, for $K = \Pi^*(E)$,
  $T(K)$ is connected via tetrahedron moves.
  
  Moreover, if $T_1,T_2 \in T(K)$ with $T_1 \le T_2$, then there is a sequence 
  of \emph{upward} tetrahedron moves from $T_1$ to $T_2$ through sections of $T(K)$. 
\end{lemma}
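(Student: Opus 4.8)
The statement is \cite[Proposition 3.3]{veering1}; here is the argument I would give. Under the bijection $T\leftrightarrow \Pi_*(T)$ between sections of $\N$ lying in $T(K)$ and ideal triangulations of $\cX$ by $\tau$-edges containing $E$, tetrahedron moves correspond to diagonal exchanges, so the first two assertions are equivalent and it is enough to treat sections. Moreover, the general connectivity statement reduces to the ``moreover'' clause: given $T_1,T_2\in T(K)$, the fiberwise maximum $T_0=T_1\vmax T_2$ is a section, it lies in $T(K)$ (each simplex $\sigma$ of $K$ lies in both $T_1$ and $T_2$, so on every fiber meeting $\sigma$ all three of $T_1,T_2,T_0$ agree with $\sigma$), and $T_1\le T_0$ and $T_2\le T_0$; so an upward path $T_1\to T_0$ in $T(K)$ followed by the reverse of an upward path $T_2\to T_0$ in $T(K)$ joins $T_1$ to $T_2$ inside $T(K)$.

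For the ``moreover'' clause, with $T_1\le T_2$ in $T(K)$, I would induct on the number $n$ of $\tau$-tetrahedra of $U(T_1,T_2)$, which is finite since the ideal triangulations $\Pi_*(T_i)$ are finite and only finitely many $\tau$-tetrahedra lie weakly between two fixed sections (compare \cite[Section 3]{veering1}). If $n=0$ then no tetrahedron lies weakly between $T_1$ and $T_2$; since along any fiber on which $T_1$ and $T_2$ differ the subsegment between them must enter the interior of some tetrahedron (each fiber crosses a tetrahedron from bottom to top), this forces $T_1=T_2$. If $n>0$, I would use the layered structure of $\N$ to locate a tetrahedron $\sigma\subset U(T_1,T_2)$ both of whose bottom faces lie on $T_1$ --- for instance by following a fiber upward from $T_1$ toward the subcomplex $\Pi^*(g)$ for an edge $g$ of $\Pi_*(T_2)$ not in $\Pi_*(T_1)$ (which satisfies $T_1\le \Pi^*(g)\le T_2$ by \Cref{lem:up_is_up}) and taking the first tetrahedron crossed. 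The upward tetrahedron move across $\sigma$ carries $T_1$ to a section $T_1'$ with $T_1\le T_1'\le T_2$ and with $U(T_1',T_2)$ containing exactly one fewer $\tau$-tetrahedron. Finally this move stays inside $T(K)$: the flipped edge is the bottom edge $e$ of $\sigma$, and if $e$ lay in $E$ then $e\in T_2$, forcing the top edge of $\sigma$ above $T_2$ and contradicting $\sigma\le T_2$; since $K=\Pi^*(E)$ is one-dimensional, deleting $e$ removes no simplex of $K$, so $K\subset T_1'$ and $T_1'\in T(K)$. The induction then yields the desired upward sequence.

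An alternative is to argue entirely at the level of triangulations: by \Cref{lem:up_is_up} an edge of $\Delta_1=\Pi_*(T_1)$ not in $\Delta_2=\Pi_*(T_2)$ is crossed by an edge of $\Delta_2$ of strictly larger slope, and \cite[Lemma 3.4]{veering1} then produces an upward-flippable edge of $\Delta_1$; the delicate point is to choose this edge so that the flip does not overshoot $\Delta_2$, which is precisely the bookkeeping the tetrahedron-count above handles automatically.

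I expect the main obstacle to be the geometric input from the layered structure of $\N$: that a section is determined along a fiber by a single point, that a fiber running between two distinct sections must pass through the interior of some $\tau$-tetrahedron, and consequently that one can choose a tetrahedron of $U(T_1,T_2)$ with both bottom faces resting on $T_1$. These facts follow from the construction of Agol and Gu\'eritaud (each fiber enters every tetrahedron through its bottom and leaves through its top), but isolating the ``lowest'' tetrahedron and verifying that the corresponding move neither overshoots $T_2$ nor destroys an edge of $E$ is where the care is needed.
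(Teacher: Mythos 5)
The paper itself gives no proof of this lemma---it is quoted directly from \cite[Proposition 3.3]{veering1}---so your argument has to stand on its own, and as written it has a gap at exactly the step that carries the content of the statement. Your reduction of connectivity to the ``moreover'' clause via the fiberwise maximum $T_1\vmax T_2$ is correct, and so is your observation that an upward move through a tetrahedron $\sigma\le T_2$ never flips an edge of $E$ (if the bottom edge $e$ of $\sigma$ were in $\Pi_*(T_2)$, then applying \Cref{lem:up_is_up} to the top edge of $\sigma$, which crosses $e$ with larger slope, would contradict $\sigma\le T_2$). The gap is the existence of the tetrahedron you flip. You need $\sigma\subset U(T_1,T_2)$ with \emph{both} bottom faces contained in $T_1$, but ``the first tetrahedron crossed by a fiber going up from $T_1$'' only guarantees that the bottom face containing the entry point lies in $T_1$ (by minimality of the carrier simplex); the other bottom face of $\sigma$ need not lie on $T_1$ --- the section $T_1$ may drop below $\sigma$ across its bottom edge --- and in that case no upward tetrahedron move across $\sigma$ can be performed on $T_1$ at all. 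The alternative sketch via \cite[Lemma 3.4]{veering1} is left with the same unproved selection step (your ``does not overshoot $\Delta_2$''), and the tetrahedron count does not handle this ``automatically'': the count only measures progress once a legal flip inside $U(T_1,T_2)$ has been produced, which is precisely what has not been shown. Your closing paragraph flags this as the delicate point, but flagging it is not the same as closing it.

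To repair the step one must iterate rather than stop at the first tetrahedron: if the second bottom face $F'$ of your $\sigma$ is not in $T_1$, then every point of the open face $F'$ lies strictly above $T_1$, so running the same fiber argument over an interior point of $\Pi(F')$ produces a tetrahedron of $U(T_1,T_2)$ lying below $\sigma$ along a common fiber; one then needs finiteness of the set of tetrahedra between the two sections together with acyclicity of this ``lies below'' relation (ultimately the strict partial order on $\tau$-edges from \cite[Section 2.1]{veering1}) to conclude that the process terminates at a tetrahedron whose entire bottom rests on $T_1$. Relatedly, your finiteness assertion for the number of tetrahedra in $U(T_1,T_2)$ is stated circularly (``finite since \dots only finitely many lie between''); citing \cite{veering1} for it is acceptable given that the paper cites that reference for the whole lemma, but if you intend a self-contained proof this too needs an argument.
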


As explained in \cite[Corollary 3.6]{veering1}, whenever $E \neq \emptyset$, there is a well-defined \define{top} $T^+$ and \define{bottom} $T^-$ of $T(E)$. That is, $T^+ \in T(E)$ and for any $T \in T(E)$, $T \le T^+$.

\vspace{5mm}

\noindent \define{$\phi$-sections.}
Suppose that $q$ is a quadratic differential associated to a pseudo-Anosov homeomorphism $\phi$. Recall that the deck transformation $\Phi$ of $\N$ is chosen to translate in the opposite direction of the flow.

 Say that a section $T$ of the veering triangulation $\tau$ is a
 \emph{$\phi$--section} if $\Phi(T) \le T$.
In other words, $T$ is a $\phi$--section if every $\tau$-edge of $\Pi_*(\Phi(T)) = \phi(\Pi_*(T))$ which crosses a $\tau$-edge of $\Pi_*(T)$ does so with lesser slope. Note that if $T$ is a $\phi$--section, then $\Phi^j(T) \le \Phi^i(T)$ for all $i \le j$.

Agol's original construction produces a veering triangulation from a sequence of diagonal exchanges through $\phi$-sections \cite[Proposition 4.2]{agol2011ideal}. In fact, he proves 

\begin{lemma}[Agol] \label{lem:AG_sec}
There is a sweep-out of $\tau$ through $\phi$--sections. That is,
there is a sequence $(T_i)_{i \in \mathbb{Z}}$ of $\phi$--sections
such that $T_{i+1}$ is obtained from $T_i$ by simultaneous upward
tetrahedron moves.
\end{lemma}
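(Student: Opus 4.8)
The plan is to derive Lemma~\ref{lem:AG_sec} (the existence of a bi-infinite sweep-out of $\tau$ by $\phi$--sections related by simultaneous upward tetrahedron moves) as a direct consequence of Agol's original construction together with the connectivity machinery already assembled in the excerpt. Since the statement is essentially a citation of \cite[Proposition 4.2]{agol2011ideal}, the role of the proof is to translate Agol's layered-triangulation language into the fibration/section language of $\N\to\cX$ used here, and to explain how to splice together the finitely-many pieces produced by Agol into a genuinely bi-infinite sequence.

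First I would recall that $\Phi\colon\N\to\N$ is the simplicial homeomorphism induced by $\phi$, translating opposite to the flow, so that $\Phi(T)\le T$ is exactly the $\phi$--section condition. Agol's construction yields a single $\phi$--section, say $T_0$, together with a finite sequence of upward tetrahedron moves carrying $T_0$ to $\Phi^{-1}(T_0)$ (this is the ``one period'' of the sweep-out: applying $\phi$ once corresponds to a full pass through the mapping torus). More precisely, by \Cref{lem:connectivity}, since $T_0\le\Phi^{-1}(T_0)$ and both lie in $T(\emptyset)$, there is a sequence of upward tetrahedron moves from $T_0$ to $\Phi^{-1}(T_0)$; after possibly grouping these into simultaneous (``maximal'') moves — replacing each maximal antichain of flippable tetrahedra at once — we get finitely many intermediate $\phi$--sections $T_0,\dots,T_k=\Phi^{-1}(T_0)$, each obtained from the previous by a simultaneous upward move. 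The key point I would verify here is that each intermediate section is itself a $\phi$--section: this follows because each $T_i$ lies between $T_0$ and $\Phi^{-1}(T_0)$, hence $\Phi(T_i)$ lies between $\Phi(T_0)$ and $T_0$, so $\Phi(T_i)\le T_0\le T_i$.

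Next I would extend periodically: set $T_{jk+i}=\Phi^{-j}(T_i)$ for all $j\in\Z$ and $0\le i<k$. Because $\Phi$ is a simplicial automorphism commuting with the fibration and preserving the flow orientation, it carries upward tetrahedron moves to upward tetrahedron moves, so the periodic extension is again a sequence of simultaneous upward moves, now indexed by all of $\Z$; and each $T_n$ is a $\phi$--section since $\Phi(\Phi^{-j}(T_i))=\Phi^{-j}(\Phi(T_i))\le\Phi^{-j}(T_i)$. Finally one should check the sweep-out property, i.e.\ that every tetrahedron of $\N$ lies in $U(T_n,T_{n+1})$ for some $n$: this is immediate from the fact that one period of moves from $T_0$ to $\Phi^{-1}(T_0)$ sweeps across a fundamental domain for the $\Phi$-action (each tetrahedron of $\cM_\phi$ is flipped exactly once per period, which is how the layered structure of $\tau$ is built), and translating by powers of $\Phi$ covers all of $\N$.

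The main obstacle, such as it is, is bookkeeping rather than mathematical depth: one must be careful that ``simultaneous upward tetrahedron move'' as defined just after \Cref{sec:sections} (replace the two bottom faces of a tetrahedron by its two top faces) matches Agol's layering move, and that grouping a sequence of single upward moves into maximal simultaneous ones is legitimate — i.e.\ that flippable tetrahedra at a given stage can be flipped in any order and in parallel without obstruction. This commutation of disjoint upward moves is implicit in \cite{agol2011ideal} and in \cite[Section 3]{veering1}; I would cite it rather than reprove it. With that in hand the argument is just the periodic-extension bookkeeping above.
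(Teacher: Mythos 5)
Your proposal is correct and takes essentially the same route as the paper, which offers no independent argument at all: the lemma is quoted directly from Agol's construction (\cite[Proposition 4.2]{agol2011ideal}), whose layered flip sequence already supplies one period of the sweep-out, exactly the input you invoke. Your added bookkeeping — that any section between $T_0$ and $\Phi^{-1}(T_0)$ is again a $\phi$--section since $\Phi(T_i)\le T_0\le T_i$, and that the $\Phi$-equivariant periodic extension preserves upward moves — is sound, with the covering property of one period being precisely the layered structure from Agol that you (and the paper) cite rather than reprove.
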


\subsection{Projections to $\tau$--compatible subsurfaces} \label{sec:proj_tau}

In this section we define a variant of the subsurface projections
$\pi_Y$ that is adapted to the 
simplicial structure of $\tau$. In the
hyperbolic setting, $\pi_Y(\alpha)$ can be defined using the geodesic
representatives of the surface $Y$ and the curve $\alpha$. In our
setting we need to use the simplicial representative $Y_\tau$ 
of a
$\tau$-compatible surface $Y$ and
a collection of $\tau$-edges representing $\alpha$.
The main result here will be \Cref{prop:proj_close}, which
shows, in a suitable setting, that the simplicial variant of the
projection is uniformly close to the usual notion.

Recall first the notion of a proper graph $G$ in a surface from
\Cref{sec:prop_graph}
 and its image $\A_S(G)$ in
the arc graph (Definition \ref{A_S definition}).

If $E$ is a collection of $\tau$--edges of $\cX$ with disjoint
interiors, then its closure in $X$, $\cl_X(E)$, is a proper
graph. This is the union of the corresponding saddle connections in
$X$.
In particular if $K$ is a subcomplex of a section then
$E=\Pi_*(K^{(1)})$ is such a collection of $\tau$-edges and we make
the notational abbreviation
\begin{equation}\label{A_S for section}
\A_X(K) = \A_X(\cl_X(\Pi_*(K^{(1)}))).
\end{equation}
Note that for any section $T$, we have (\Cref{cor:diam_bound}) $\diam(\A_X(T)) \le D$, where $D= 15$. 


Suppose now that $Y \subset X$ is a $\tau$--compatible nonannular subsurface and
$G \subset X$ is a union of $\tau$--edges with disjoint interiors (i.e. \emph{a proper graph of $\tau$-edges}).  Then
$\int_\tau(Y) \cap G$ is a proper graph in $\int_\tau (Y)$
and we set 
\begin{equation}\label{pi Y tau def}
\pi_Y^\tau (G) = \A_Y(\int_\tau(Y) \cap G). 
\end{equation}
Note that this could in general be empty, if $\int_\tau(Y) \cap G$ is
not essential. 

When $Y$ is a $\tau$-compatible annulus, $\int_\tau(Y) \cap G$ is a collection of disjoint arcs each of which is contained in the interior of a $\tau$-edge. Taking the preimages of these $\tau$-edges in $X_Y$,
we obtain the projection $\pi_Y^\tau (G)$ by associating to each such $\tau$-edge $a$
that joins \emph{opposite} sides of $\partial Y_\tau \subset X_Y$
the collection of complete $q$-geodesics $a^*$ in $X_Y$ that contain it. Each of these geodesics gives a well-defined arc of $\A(Y) = \A(X_Y)$, and if here are no such $\tau$-edges, then the projection is empty.


In general, this notion of subsurface projection is easily extended to a
subcomplex $K$ of a section of $\N$, in analogy with (\ref{A_S for
  section}). We simply write:
\begin{equation}
  \pi_Y^\tau(K)  = \pi_Y^\tau(\cl_X(\Pi_*(K^{(1)}))).
\end{equation}

Finally, we define the subsurface distance between subcomplexes $K_1$ and $K_2$ to be
\begin{equation}\label{dY complexes}
d_Y(K_1,K_2) = d_Y(\pi^\tau_Y(K_1), \pi^\tau_Y(K_2)).
\end{equation}

\medskip


The following lemma establishes some important technical properties of $\tau$-compatible subsurfaces. Key to the argument is the construction of $\int_\tau(Y)$ from $\int_q(Y)$ that appears in \cite[Theorem 5.3]{veering1} and is illustrated in \Cref{fig:thull-twice}.

\begin{figure}[htbp]
\begin{center}
\includegraphics[width=.7 \textwidth]{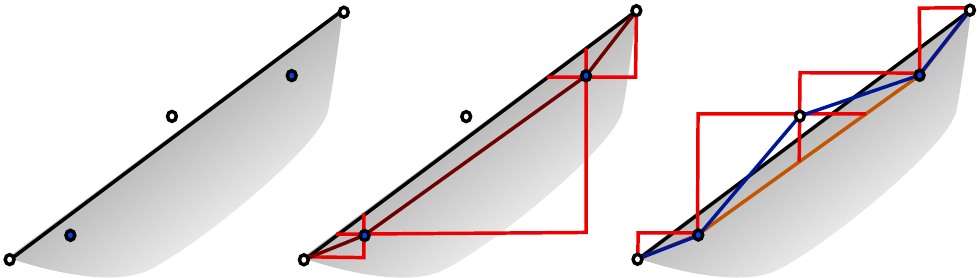}
\caption{Left: $\int_q(Y)$ is shaded, contains the blue singularities, and its boundary contains the black saddle connection. Middle: the `inner {\bf t}-hull construction' isotopes the surface within itself; its new boundary consists of saddle connections (dark red) through the blue singularities. Right: $\int_\tau(Y)$, whose boundary consists of $\tau$-edges (blue), is then produced with the `outer {\bf t}-hull construction.' }
\label{fig:thull-twice}
\end{center}
\end{figure}

We remark that one difficulty in what follows is that the $\tau$-representative $\int_\tau(Y)$, unlike the $q$-representative $\int_q(Y)$, is \emph{not} convex with respect to the $q$ metric. 

\begin{lemma} \label{lem:overlap_tau}
Let $Y$ and $Z$ be $\tau$-compatible subsurfaces of $X$ and let $G \subset X$ be
a proper graph of $\tau$-edges.
\begin{enumerate}
\item The diameter $\diam_Y(\pi_Y^\tau (G))$ is bounded by $D = 15$. If $Y$ is an annulus, then $\diam_Y(\pi_Y^\tau (G)) \le 3$.
\item If $Y$ and $Z$ are disjoint then so are $\int_\tau(Y)$ and $\int_\tau (Z)$.
\item The subsurface $\int_\tau(Y)$ is in minimal position with the foliation $\lambda^\pm$. In particular, the arcs of $\int_\tau(Y) \cap \lambda^\pm$ agree with the arcs of $\pi_Y(\lambda^\pm)$.
\end{enumerate}
\end{lemma}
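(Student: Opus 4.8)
\textbf{Proof proposal for Lemma~\ref{lem:overlap_tau}.}

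The plan is to treat the three parts in order, leaning on the structural description of $\int_\tau(Y)$ coming from the inner/outer $\mathbf{t}$-hull construction recalled in \Cref{fig:thull-twice} together with the diameter bound \Cref{cor:diam_bound}.

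For part (1), I would observe that $\pi_Y^\tau(G) = \A_Y(\int_\tau(Y)\cap G)$ is, by definition, the set of nearly simple arcs/curves in the proper graph $\int_\tau(Y)\cap G$ inside $\int_\tau(Y)$, and that this graph has a controlled number of vertices: its vertices are (the interiors of) $\tau$-edges of $G$ crossing into $\int_\tau(Y)$, which one cuts along $\partial_\tau Y$ to get a proper graph in $Y_\tau\subset X_Y$. The count $2|\chi(Y)|+1$ needed to invoke \Cref{cor:diam_bound} should follow from the usual Euler-characteristic bound on the number of arcs in a triangulated-type graph, after noting that lifting to the cover $X_Y$ does not change $\chi$. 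So the nonannular case reduces to an application of \Cref{cor:diam_bound} with $D=15$, precisely as in the proof of that corollary (realize two nearly simple arcs in a neighborhood of the graph, bound their intersection number by $4(2|\chi(Y)|+1)$, apply \eqref{eqn:i bound 8}). For the annular case, recall that $\int_\tau(Y)$ is a $\tau$-compatible annulus and $\int_\tau(Y)\cap G$ is a disjoint collection of sub-arcs of $\tau$-edges; any two $\tau$-edges crossing the annulus from side to side intersect the core at most once each, so the associated arcs of $\A(X_Y)$ have intersection number at most $1$, giving $d_Y\le 3$ by the standard annular estimate $d_Y(a,b)\le i(a,b)+1$. (I expect the bookkeeping of ``how many times a $\tau$-edge can enter $\int_\tau(Y)$'' to be the one spot requiring care, but this is exactly the kind of estimate already used implicitly in \cite{veering1}.)

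For part (2), the key input is the last clause of the $\tau$-compatibility setup: one component of $X\ssm \partial_\tau Y$ is an open surface isotopic to $\int(Y)$, namely $\int_\tau(Y)$, and similarly for $Z$. If $Y$ and $Z$ are disjoint essential subsurfaces of $X$, then $d_W(\lambda^-,\lambda^+)$ large for $W\in\{Y,Z\}$ forces $q$-compatibility and then $\tau$-compatibility; disjointness of $\int_q(Y)$ and $\int_q(Z)$ is \cite[Lemma 5.1]{veering1}-type content (the $q$-convex hulls of nested/disjoint subgroups behave well), and the $\mathbf{t}$-hull construction that produces $\int_\tau$ from $\int_q$ only enlarges the surface within a neighborhood of $\int_q(Y)$ — more precisely it moves the boundary across triangles adjacent to $\int_q(Y)$. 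So I would argue: since $\int_q(Y)$ and $\int_q(Z)$ are disjoint and each is $q$-convex, a $\tau$-edge of $\partial_\tau Y$ cannot cross into $\int_q(Z)$ (it lies in the closure of a component of the complement of $\int_q(Y)$ that is disjoint from $\int_q(Z)$), and symmetrically; hence $\int_\tau(Y)$ and $\int_\tau(Z)$ stay on opposite sides of each other's $\tau$-boundary and are disjoint. The honest obstacle here is that $\int_\tau(Y)$ is \emph{not} $q$-convex (as the remark before the lemma stresses), so I cannot simply quote convexity of the complementary regions; I need to use that the outer $\mathbf{t}$-hull only spreads the boundary across tetrahedra/triangles \emph{incident to $\int_q(Y)$ from outside}, and that such triangles for $Y$ and for $Z$ are disjoint because $\int_q(Y)$ and $\int_q(Z)$ already have disjoint closed neighborhoods inside $X$ (this last point follows since both are embedded and isotopic to disjoint subsurfaces).

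For part (3), the claim is that $\int_\tau(Y)$ meets $\lambda^\pm$ in minimal position, equivalently that no component of $\int_\tau(Y)\cap\lambda^\pm$ is inessential (no boundary-parallel bigon with $\partial_\tau Y$ and no closed leaf bounding a disk). Minimal position of $\partial_\tau Y$ with $\lambda^\pm$ holds because each component of $\partial_\tau Y$ is a concatenation of $\tau$-edges, i.e.\ saddle connections, and a saddle connection is quasi-transverse to both $\lambda^+$ and $\lambda^-$ by construction of the veering triangulation (its lift spans a singularity-free rectangle, so it crosses each foliation monotonically and can have no bigon); a concatenation of such along singularities inherits quasi-transversality, and in a measured-foliation setting quasi-transverse curves are in minimal position with the foliation, so there are no bigons. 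Since $\int_\tau(Y)$ is isotopic to $\int(Y)$ through the $\mathbf{t}$-hull construction and this isotopy can be taken to keep $\partial_\tau Y$ quasi-transverse throughout (each intermediate boundary is again a union of saddle connections), it follows that $\int_\tau(Y)\cap\lambda^\pm$ has exactly the essential components, which by the general definition of $\pi_Y(\lambda^\pm)$ (as the essential arcs of $\lambda^\pm\cap Y$ for $\partial Y$ in minimal position with $\lambda^\pm$) are precisely the arcs representing $\pi_Y(\lambda^\pm)$. The main obstacle in this part is being precise about ``quasi-transverse implies no bigons'' for a $1$-complex of saddle connections rather than a smooth curve; I would handle it by working in the $q$-metric, where a leaf of $\lambda^\pm$ entering and exiting $\int_\tau(Y)$ through the same side would have to double back, contradicting that leaves of $\lambda^+$ are vertical and the boundary saddle connections have well-defined nonzero horizontal holonomy (and symmetrically for $\lambda^-$), so a bigon is impossible on length/direction grounds.
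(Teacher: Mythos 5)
There are genuine gaps in all three parts, even though your overall reduction of the nonannular case of (1) to \Cref{cor:diam_bound} is the right move. For (1): the vertex bound is not an ``Euler-characteristic bound on the number of arcs'' -- the vertices of the proper graph $G\cap \int_\tau(Y)$ are the singularities of $q$ lying in $\int_\tau(Y)$, and the paper bounds their number by $2|\chi(Y)|$ via Gauss--Bonnet; your flagged worry about how many times an edge enters $\int_\tau(Y)$ is not the issue (near-simplicity handles revisits), but you never supply the actual vertex count. In the annular case your argument does not engage with how the projection is defined: one takes complete $q$-geodesic extensions $a^*,b^*$ in $X_Y$ of $\tau$-edges crossing $Y_\tau$, and the intersections to be bounded are between $a^*$ and $b^*$, not with the core. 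Disjointness of the $\tau$-edges themselves gives nothing about the extensions; the paper uses $\int_q(Y)\subset\int_\tau(Y)$ (a flat annulus) together with the Gauss--Bonnet fact that two $q$-geodesic segments meet at most once in each component of $X_Y\ssm \int_q(Y)$, yielding $i(a^*,b^*)\le 2$ and hence diameter $\le 3$; your claimed intersection number $1$ is not justified and is false in general.

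For (2), your key step -- that $\int_q(Y)$ and $\int_q(Z)$ have disjoint \emph{closed} neighborhoods -- fails precisely in the relevant cases: disjoint subsurfaces can be adjacent, and then their $q$-hulls share boundary saddle connections, so the triangles swept by the outer $\thull$-hull for $Y$ and for $Z$ need not be disjoint. The paper's route is different: the inner $\thull$-hull step pushes each surface within itself (so it preserves disjointness), and the outer step is handled exactly as in \cite[Lemma 6.1]{veering1}; you would need that argument (or a substitute) rather than a separation-by-neighborhoods claim. For (3), the assertion that a concatenation of saddle connections ``inherits quasi-transversality'' is exactly what can fail at the corners, and your holonomy patch only rules out a bigon whose side in $\partial_\tau Y$ is a single $\tau$-edge; a bigon side traversing several edges can have zero net horizontal (or vertical) holonomy. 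The paper avoids this entirely: leaves are in minimal position with $\int_q(Y)$ because its boundary is $q$-geodesic (local CAT$(0)$), and the isotopy from $\int_q(Y)$ to $\int_\tau(Y)$ pushes along leaves of $\lambda^{\pm}$, so minimal position is preserved. As written, your arguments for (2) and (3), and for the annular half of (1), do not close.
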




\begin{proof}
The graph $G \cap \int_\tau(Y)$ has its vertices in $\int_\tau(Y) \cap \sing(q)$. By Gauss--Bonnet, $|\int_\tau(Y) \cap \sing(q)| \le 2|\chi(Y)|$, and so $(1)$ follows from \Cref{cor:diam_bound} when $Y$ is not an annulus.
 
If $Y$ is an annulus, then recall that 
$\int_q(Y)$ is a flat annulus and that $\int_q(Y) \subset \int_\tau(Y)$ since the process of going from the $q$-hull to the $\tau$-hull for annuli only pushes outward (c.f. 
\cite[Remark 5.4]{veering1}). Lifting to the annular cover $X_Y$, let $a,b$ be $\tau$-edge coming from $G$ that join opposite sides of $Y_\tau$. Then $a$ and $b$ are disjoint and any of their $q$-geodesic extensions $a^*, b^*$ cross the maximal open flat annulus $\int_q(Y)$ of $X_Y$ in subsegments of $a,b$, respectively. Moreover, by a standard Gauss--Bonnet argument (e.g. \cite[Lemma 3.8]{rafi2005characterization}), any two $q$-geodesic segments intersect at most once in any component of $X_Y \ssm \int_q(Y)$. Hence, $a^*,b^*$ intersect at most twice and so $\diam_Y(\pi_Y^\tau (G)) \le 3$ when $Y$ is an annulus.

Items $(2)$ and $(3)$ follow exactly as in \cite[Lemma 6.1]{veering1}. As that lemma was proven only in the fully-punctured case, we note that in general for item $(2)$
one must perform the inner $\thull$-hull construction 
(middle of \Cref{fig:thull-twice}) as an intermediate step. 
However, since this pushes each surface within itself, it must preserve disjointness. For $(3)$, the isotopy from $\int_q(Y)$ to $\int_\tau(Y)$ pushes along the leaves of $\lambda^+$ (or $\lambda^-$). Hence, the leaves of $\lambda^+$ (or $\lambda^-$) are in minimal position with $\int_\tau(Y)$ since they are with $\int_q(Y)$, by the local CAT$(0)$ geometry.
\end{proof}

We note that it follows from \Cref{lem:overlap_tau}.(3) (or directly from its proof) that if $\til \lambda^\pm$ are the lifts of $\lambda^\pm$ to $\til X$ and $C^\tau_Y$ is a component of the preimage of $\int_\tau(Y)$, then the intersection of  $C^\tau_Y$ with each leaf of $\til \lambda^\pm$ is connected.

The following proposition relates the projections defined here to the usual notion of subsurface projection.

\begin{proposition} \label{prop:proj_close}
Let $Y,Z$ be $\tau$--compatible subsurfaces of $X$ and suppose that $\pi_Z(\partial Y) \neq \emptyset$. Further assume that $d_Y(\lambda^-,\lambda^+) \ge 3$ (or if $Y$ is an annulus that $d_Y(\lambda^-,\lambda^+) \ge 6$).
Then
\[
\diam_{Z} \big(\pi_Z^\tau(\partial_\tau Y) \cup \pi_Z(\partial Y) \big) \le 7.
\]
\end{proposition}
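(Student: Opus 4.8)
The plan is to show that the $\tau$--adapted projection $\pi_Z^\tau(\partial_\tau Y)$ and the ordinary projection $\pi_Z(\partial Y)$ have representatives that intersect a bounded number of times inside the relevant cover, and then invoke \Cref{cor:diam_bound} (via the intersection bound \cref{eqn:i bound 8}) to conclude that they are within distance $D = 15$, with a few extra units coming from the diameter of each individual projection. To that end, the first step is to replace the abstract objects with concrete representatives in the cover $X_Z$ (or its completion $\overline X_Z$): by $\tau$--compatibility of $Z$ we have the simplicial representative $Z_\tau$ with $\int_\tau(Z)$ embedded in $X$, and by $\tau$--compatibility of $Y$ we have $\partial_\tau Y$ as a disjoint union of $\tau$--edges. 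The hypothesis $d_Y(\lambda^-,\lambda^+)\ge 3$ (or $\ge 6$ in the annular case) is exactly what is needed to guarantee, via \Cref{thm: tau-compatible} and \Cref{prop: q_compatible}, that both $\int_q(Y)$ and $\int_\tau(Y)$ are well-defined, and that $\partial'Y$ is essential in $Z$ precisely when $\partial_\tau Y$ is; this is what makes the two projections simultaneously nonempty given that $\pi_Z(\partial Y)\neq\emptyset$.

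The core of the argument is a comparison between $\partial Y$ (in minimal position with $\partial Z$, projected the usual way) and $\partial_\tau Y$ (intersected with $\int_\tau(Z)$). I would lift everything to $X_Z$: let $\widetilde{\partial Y}$ be the preimage of the geodesic (or minimal-position) representative of $\partial Y$, and let $\widetilde{\partial_\tau Y}$ be the preimage of $\partial_\tau Y$, each intersected with the representative $Z_\tau$ of $Z$ in $X_Z$. The key geometric input is that $\partial_\tau Y$ is homotopic to $\partial'Y$ \emph{within a controlled region}: the $\tau$--hull construction (Figure~\ref{fig:thull-twice}) moves $\partial Y$ only across a bounded number of singularity-free rectangles/singularities. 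Concretely, each $\tau$--edge of $\partial_\tau Y$ is a saddle connection spanning a singularity-free rectangle, and the number of such edges in $\partial_\tau Y$ is $O(|\chi(Y)|)$ by Gauss--Bonnet. Combined with \Cref{lem:overlap_tau}.(3), which says $\int_\tau(Z)$ is in minimal position with $\lambda^\pm$, and with the $q$--geodesic argument used in the proof of \Cref{lem:overlap_tau}.(1) (any two $q$--geodesic segments meet at most once outside the flat part), one bounds the number of intersections of a component of $\widetilde{\partial_\tau Y}\cap Z_\tau$ with a component of $\widetilde{\partial Y}\cap Z_\tau$ by a constant independent of complexity. Then $d_Z(\pi_Z^\tau(\partial_\tau Y),\pi_Z(\partial Y))$ is bounded by applying the complexity-independent distance bound $d_S(a,b)\le 2\log(i(a,b))+2$ to these representatives, together with \Cref{lem:overlap_tau}.(1) and the bound $\diam(\pi_Z(\partial Y))\le 2$ for the ordinary projection; tallying these gives $\le 7$.

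\textbf{The main obstacle} is precisely the non-convexity of $\int_\tau(Y)$ (and of $\int_\tau(Z)$) with respect to the $q$--metric, flagged in the remark before \Cref{lem:overlap_tau}: one cannot simply say "$q$--geodesics meet once" because $\partial_\tau Y$ is not a $q$--geodesic boundary. The way around this is to do the comparison in two stages matching the two-stage $\thull$--hull construction: first compare $\partial Y$ with the intermediate (dark red) boundary obtained from the inner $\thull$--hull construction, which \emph{is} a union of saddle connections through the original singularities and is therefore $q$--geodesic, so the single-intersection bound applies; then compare that intermediate boundary with $\partial_\tau Y$, where the difference is a bounded-combinatorics local move (outer $\thull$--hull construction) near each singularity, contributing only a bounded number of extra crossings per edge. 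A secondary technical point is handling the annular case of $Y$: there $\partial_\tau Y$ is a union of $\tau$--edges crossing the flat cylinder $\int_q(Y)$, and the projection $\pi_Y^\tau$ was defined via $q$--geodesic extensions; one must check that projecting such an edge to $Z$ still has the bounded-intersection property, which follows from the same Gauss--Bonnet-on-$X_Z$ estimate together with the stronger hypothesis $d_Y(\lambda^-,\lambda^+)\ge 6$ used to ensure the cylinder is wide enough that its core and $\partial_\tau Y$ determine the same vertex of $\A(Y)$.
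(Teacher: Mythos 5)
Your outline does not engage with the point where the hypothesis $d_Y(\lambda^-,\lambda^+)\ge 3$ (resp.\ $\ge 6$ for annular $Y$) actually does work, and that is exactly where the gap lies. Since $Y$ and $Z$ are already assumed $\tau$--compatible, the lower bound on $d_Y(\lambda^-,\lambda^+)$ is not merely ``what is needed for $\int_q(Y)$ and $\int_\tau(Y)$ to exist''; in the paper it is used quantitatively to control the failure of minimal position between a lift $\gamma$ of $\partial Y$ and $\partial Z_\tau$ in the cover $X_Z$. Because $\partial Z_\tau$ is not $q$--geodesic, $\gamma$ and $\partial Z_\tau$ can a priori cobound many \emph{nested} bigons outside $Z_\tau$, so the arc $y$ of $\gamma\cap Z_\tau$ (which is the object lying within distance $1$ of $\pi_Z^\tau(\partial_\tau Y)$, by the disjointness of $\partial Y$ and $\partial_\tau Y$) can differ from the honest projection $\pi_Z(\partial Y)$ by an amount governed by the nesting depth, not by any local count near singularities. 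The paper's proof introduces $k$--position, proves \Cref{lem:no_nest} ($d_Z(y,\pi_Z(\gamma))\le 2\log(2k)+2$), and then rules out three nested bigons by running rays $l^\pm$ of $\til\lambda^\pm$ from the innermost point through the nested configuration and through a lift $C_Y$ of $Y$, producing homotopic components of $\lambda^+\cap Y$ and $\lambda^-\cap Y$ and hence $d_Y(\lambda^-,\lambda^+)\le 2$ (or $\le 5$ when $Y$ is an annulus), contradicting the hypothesis. Your proposal offers no substitute for this step: the ``two-stage hull comparison'' does not bound nesting, since the outer $\thull$--hull isotopy is not a bounded local move near each singularity (it pushes along leaves of $\lambda^\pm$ across regions of unbounded combinatorial size), and a bigon between $\gamma$ and $\partial Z_\tau$ is a global configuration in $X_Z$ that cannot be excluded by counting crossings per $\tau$--edge.

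Two further problems. First, your intersection estimate is per edge of $\partial_\tau Y$, and $\partial_\tau Y$ has on the order of $|\chi(Y)|$ edges, so the resulting intersection number is complexity-dependent and $2\log(i)+2$ would not yield the uniform constant $7$; the uniform bound requires a complexity-independent statement such as the paper's $k\le 2$. Second, you never treat the case where $Z$ is an annulus, where $\pi_Z^\tau$ is defined via $q$--geodesic extensions $a^*$ and the logarithmic intersection--distance bound you invoke is false for annular curve graphs; the paper handles this case separately, comparing $a$ with the single saddle connection carrying $\gamma_q\cap\int_q(Z)$ and using the Gauss--Bonnet count outside the flat annulus to get at most three intersections with $a^*$. (Your remark about the annular--$Y$ hypothesis ``making the cylinder wide enough that its core and $\partial_\tau Y$ determine the same vertex'' is also not how $\ge 6$ is used; it is used to contradict the estimate $d_Y(\lambda^-,\lambda^+)\le 5$ arising from the nested-bigon configuration.)
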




As usual, let $X_Z$ denote the $Z$--cover of $X$. In this subsection, a \emph{core} $Z'$ of $X_Z$ is a submanifold with boundary which is a complementary component 
 of simple curves and proper arcs such that $Z' \to X_Z$ is a homotopy equivalence. This definition includes the usual convex core in the hyperbolic metric as well as $Z_\tau \subset X_Z$ (\Cref{sec:tau_hull}).
In general, $\partial Z' \subset X_Z$ is a collection of curves and arcs.

\begin{figure}[htbp]
\begin{center}
\includegraphics[width=.7 \textwidth]{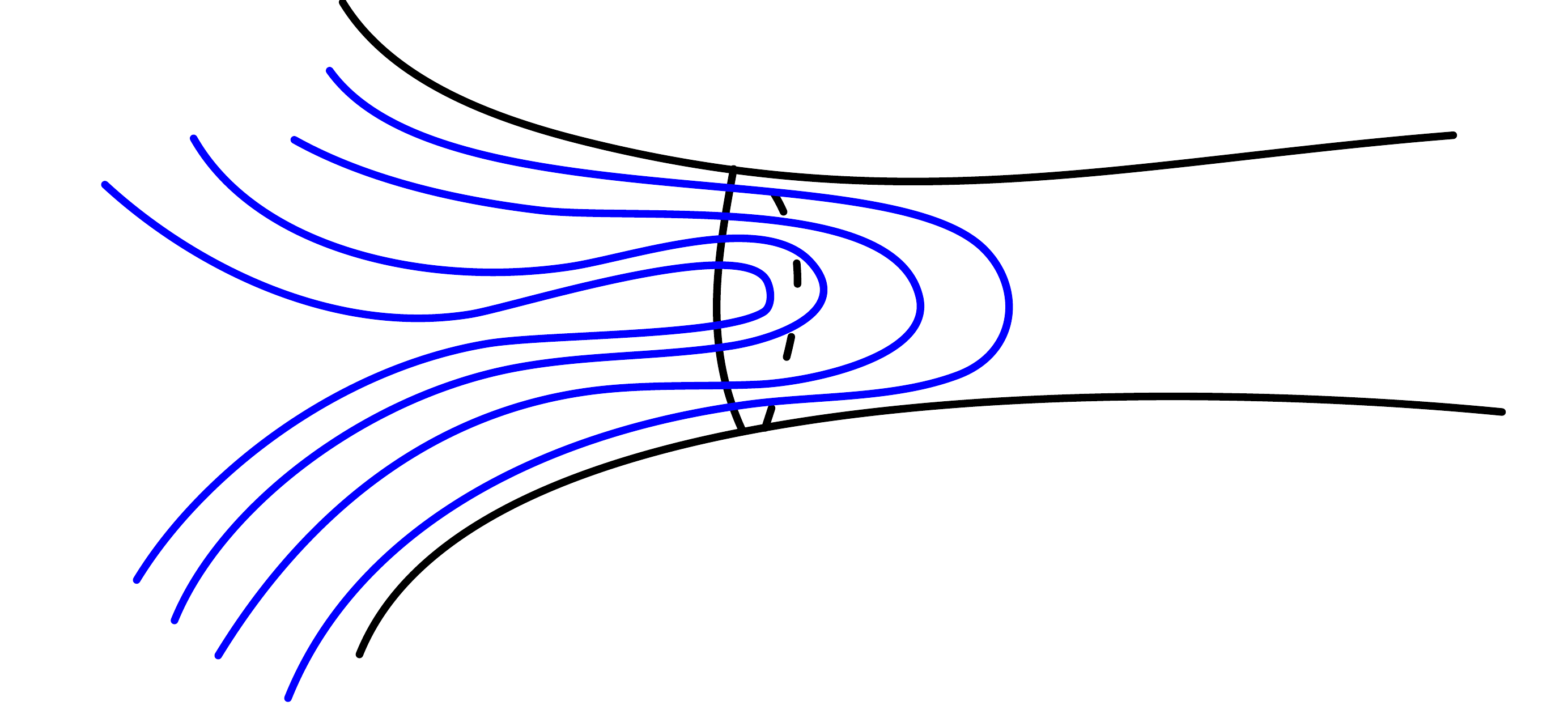}
\caption{The curve/arc $\gamma$ (blue) is in $4$-position with respect to $\partial Z$, but not $3$--position. Note that the arcs of $\partial Z$ cobounding these bigons are nested.}
\label{fig:k_pos}
\end{center}
\end{figure}
 
Let $\gamma$ be any essential curve or proper arc in $X_Z$. Note that $\gamma$ may not be in minimal position with $\partial Z'$, that is, there may be bigons between arcs of $\gamma$ and $\partial Z'$. To handle this situation, we make the following definition:
For some $k\ge0$, we say that $\gamma$ is in \define{$k$--position} with respect to $\partial Z'$ if $k$ is the largest integer such that a collection of $k$ \emph{nested} subsegments of $\partial Z'$ cobound bigons with subsegments of $\gamma$ whose interiors are contained in $X_Z \ssm Z'$. 
See \Cref{fig:k_pos}.

The point of this condition is the following lemma.

\begin{lemma} \label{lem:no_nest}
Suppose that $Z$ is a nonannular subsurface of $X$ and that $X_Z$ and $Z'$ are as above. Let $\gamma$ be an essential curve or proper arc in $X_Z$
which is in $k$--position with respect to $\partial Z'$, and let $y$ be an essential arc of $\gamma \cap Z'$. Then
\[
d_Z(y, \pi_Z(\gamma)) \le 2\log(2k) +2.
\]
\end{lemma}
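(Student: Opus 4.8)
The plan is to reduce the statement to an intersection‑number estimate and then invoke the complexity–independent bound $d_Z(a,b)\le 2\log_2\big(i(a,b)\big)+2$ recalled in \Cref{distance and intersection}.

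First I would replace $\gamma$ by a representative $\gamma_0$ in minimal position with $\partial Z'$, obtained by pushing $\gamma$ across bigons, and record two facts. Since $Z'\to X_Z$ is a homotopy equivalence and $X_Z$ covers the finite–type surface $X$, every component of $X_Z\ssm Z'$ is a half‑open annulus (a funnel or a cusp neighborhood); hence an essential curve or proper arc in minimal position with $\partial Z'$ meets $Z'$ in a \emph{single} essential component. Thus $\gamma_0\cap Z'$ is a single essential arc (or curve) $y_0$, and since the subsurface projection does not depend on the choice of core (compact cores of $X_Z$ being unique up to isotopy), $\pi_Z(\gamma)=\{[y_0]\}$. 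So it suffices to prove $d_Z(y,y_0)\le 2\log_2(2k)+2$, and for that it is enough to show $i([y],[y_0])\le 2k$ (the case $i([y],[y_0])=0$ gives $d_Z(y,y_0)=0$, which is fine for $k\ge 1$).

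The heart of the argument is to build explicit transverse representatives of $[y]$ and $[y_0]$ that can be compared. Since $\gamma$ is simple, $\gamma\cap Z'$ is a disjoint union of embedded arcs/curves, one of which is $y$, and passing from $\gamma$ to $\gamma_0$ splices these pieces together along subsegments of $\partial Z'$, each pushed slightly into a thin collar $N$ of $\partial Z'$; because $\gamma_0\cap Z'$ is connected, all of these pieces are spliced into one, giving a representative $\widehat y$ of $[y_0]$ equal to the union of the arcs of $\gamma\cap Z'$ together with finitely many ``joining'' arcs $\beta_1',\dots,\beta_m'\subset N$, where $\beta_j'$ is a pushed‑in copy of the base segment $\beta_j\subset\partial Z'$ of one of the bigons lying in $X_Z\ssm Z'$. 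Now take a parallel pushoff $y^\ast$ of $y$ that is disjoint from the other arcs of $\gamma\cap Z'$ (possible since those are disjoint from $y$) and disjoint from $\partial Z'$ except near the two endpoints $p,q$ of $y$. Then every point of $y^\ast\cap\widehat y$ is a crossing of $y^\ast$ with some $\beta_j'$, and such a crossing occurs only when $p$ (or $q$) is separated inside $N$ from $\mathrm{int}(Z')$ by $\beta_j'$. The $k$‑position hypothesis is precisely what bounds this: the base segments $\beta_j$ that separate a fixed point of $\partial Z'$ in this way form a nested family, so at most $k$ of them do so near $p$ and at most $k$ near $q$. Hence $i([y],[y_0])\le |y^\ast\cap\widehat y|\le 2k$, and the bound in the statement follows.

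The step I expect to be the main obstacle is this last counting: one must verify that the bigons with interiors in $X_Z\ssm Z'$ can be organized by an innermost‑bigon argument so that at most $k$ of their base segments separate any given point of $\partial Z'$ from $\mathrm{int}(Z')$ inside the collar, and that the pushoff $y^\ast$ can be chosen to avoid spurious intersections with the spliced curve $\widehat y$. This is exactly the configuration drawn in \Cref{fig:k_pos}, where the ``depth'' of the nested bigons is the integer $k$; the remaining steps are routine.
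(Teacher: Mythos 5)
Your overall strategy coincides with the paper's: reduce to showing that a representative of $[y]$ and a representative of $\pi_Z(\gamma)$ cross at most $2k$ times, then invoke the complexity-independent bound $d\le 2\log i+2$. The difference is in how the representatives are produced, and this is where your argument has a genuine gap. You assert that every component of $X_Z\ssm Z'$ is a half-open annulus (funnel or cusp neighborhood), and deduce that a minimal-position representative $\gamma_0$ meets $Z'$ in a \emph{single} essential component $y_0$ with $\pi_Z(\gamma)=[y_0]$, which is what makes your splicing construction of $\widehat y$ produce one connected arc. But the hypotheses only give that $Z'$ is a core in the sense of the paper, whose boundary is in general a collection of simple closed curves \emph{and proper arcs}; the complementary components are then only disks or annuli, and the disk case really occurs in the intended application ($Z'=Z_\tau$, whose boundary can pass through punctures of $X$, so that in $X_Z$ it breaks into proper arcs). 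For a complementary disk whose frontier consists of several arcs of $\partial Z'$ separated by ideal points, an excursion of $\gamma$ that enters through one boundary arc and exits through another (or encloses an ideal point) does not cobound a bigon and cannot be pushed off; hence $\gamma_0\cap Z'$ may be disconnected, $\pi_Z(\gamma)$ is not represented by ``the'' component $y_0$, and the spliced $\widehat y$ is no longer a connected representative of $[\gamma]$, so your count of crossings with the pushed-in bigon bases does not bound $i\big([y],\pi_Z(\gamma)\big)$.

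The paper's proof is arranged precisely to avoid needing this connectivity: it never normalizes $\gamma$ at all, but instead extends $y$ to a proper arc $y^*$ of $X_Z$ by appending rays at its two endpoints that meet $Z'$ only there, and then uses only that the complementary components are disks or annuli to isotope away every crossing of $y^*$ with $\gamma$ that is not trapped in one of the two stacks of at most $k$ nested bigons based at the endpoints of $y$, giving $i(y^*,\gamma)\le 2k$ directly. If you want to salvage your route, you would have to either restrict to cores with purely annular complement (which is not the setting of \Cref{prop:proj_close}) or redo the splicing argument componentwise and explain which component of $\gamma_0\cap Z'$ computes $\pi_Z(\gamma)$ --- at which point you are essentially back to the paper's argument. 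Two smaller points: your nestedness claim for the bases containing a fixed endpoint (linked bases would force the embedded $\gamma$ to cross itself) is fine and is indeed the kind of verification needed; and the remark that $i([y],[y_0])=0$ gives $d_Z(y,y_0)=0$ is not quite right (disjoint distinct arcs have distance $1$), though this is harmless for $k\ge 1$.
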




\begin{proof}
To prove the lemma, we consider $y$ as an arc of $X_Z$ as follows: for each endpoint $p \in y \cap \partial Z'$ append to $y$ a proper ray in $X_Z$ starting at $p$ which meets $Z'$ only at $p$. Let $y^*$ denote the resulting essential arc of $X_Z$. (Under the implicit/canonical identification $\A(\int(Z')) \cong \A(X_Z)$, $y$ and $y^*$ are identified.) Now we claim that as isotopy classes of arcs in $X_Z$, $y^*$ and $\gamma$ have at most $2k$ essential intersections, thus proving the lemma.

\begin{figure}[htbp]
\begin{center}
\includegraphics[width= 1 \textwidth]{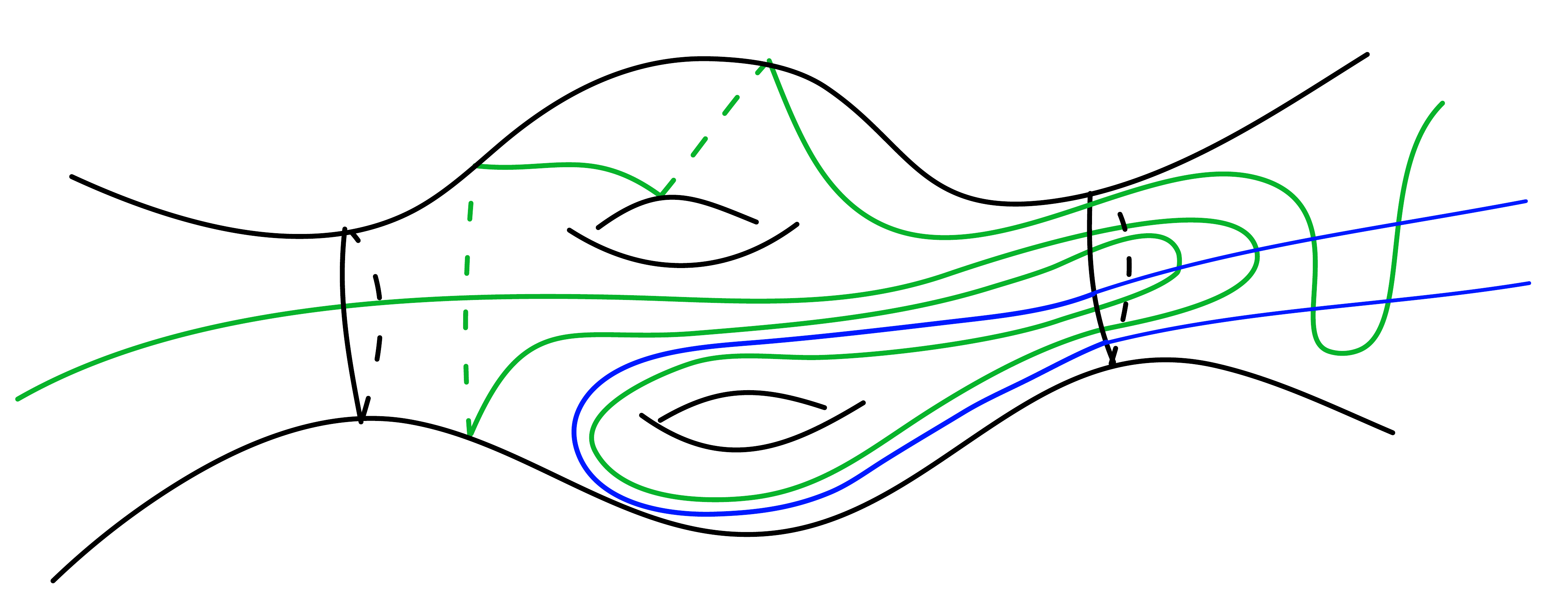}
\caption{The arc $\gamma$ (green) is in $2$-position with $\partial Z$. The arc $y^*$ (blue) intersects $\gamma$ essentially no more than twice.}
\label{fig:2-position}
\end{center}
\end{figure}

For this, first push $y^*$ slightly to one side of itself, so that $y^*$ and $\gamma$ are transverse. 
Then each point $p_i \in y^* \cap \partial Z'$ ($i=1,2$) is contained in no more than $k$ nested subsegments of $\partial Z'$ which cobound bigons $\mathcal{B}_i$ with $\gamma$, as in the definition of $k$-position. See \Cref{fig:2-position}.

Since each component of $X_Z \ssm \int (Z')$ is a disk or annulus, it follows that there is an isotopy supported in $X_Z \ssm \int (Z' \cup \mathcal{B}_1 \cup \mathcal{B}_2)$ which removes all of the intersections between $\gamma$ and $y^*$ which are not contained in $\mathcal{B}_1 \cup \mathcal{B}_2$. Hence, $\gamma$ and $y^*$ have at most $2k$ essential intersections. This completes the proof.
\end{proof}

Because $\int_\tau(Y)$ is an \emph{open} subsurface representative of $Y$, it does not provide a good representative of $\partial Y$. So for the proof of \Cref{prop:proj_close} we do the following: Let $Y_n$ ($n\ge1$) be the exhaustion of $\int_\tau (Y)$ by subsurface isotopic to $Y$ obtained by removing the open $\epsilon_n$--neighborhood of $\partial_\tau Y$. Here, $\epsilon_n \to 0$ as $n \to \infty$, and distance is taken with respect to the flat metric $q$.
Note that by our definitions, each $\A(Y_n)$ is naturally identified with $\A(Y)$.
When $Y$ is not an annulus, will use the property that, through this identification, for any curve or arc $a$ in $X$, the collection of curves and arcs in $\A(Y)$ given by $a \cap Y_n$ eventually agrees with the collection given by $a \cap \int_\tau(Y)$.

\begin{proof}[Proof of \Cref{prop:proj_close}]
First suppose that $Z$ is not an annulus.

As above, let $Y_n$ be the exhaustion of $\int_\tau(Y)$ introduced above. To keep notation simple, we set $Y = Y_n$ for $n$ sufficiently large (to be determined later). 
Note that by construction $\partial Y$ and $\partial_\tau Y$ are disjoint in $X$. Hence, if $\gamma$ is any essential component of the preimage of $\partial Y$ in $X_Z$, then $\gamma$ is disjoint from the preimage of $\partial_\tau Y$. Letting $y$ be any essential component of $\gamma \cap Z_\tau$, this shows that $d_Z(y, \pi_Z^\tau(\partial_\tau Y)) \le 1$. Hence, it suffices to bound the distance between $y$ and $\pi_Z(\gamma)$ in $\A(Z)$. This will follow from \Cref{lem:no_nest}, once we show that $\gamma$ is in $2$--position with respect to $\partial Z_\tau$ in $X_Z$.

Suppose that this were not the case; that is, suppose that there is a point $p \in \partial Z_\tau$ which is contained in $3$ nested subsegments of $\partial Z_\tau$, each of which cobounds a bigon with a subarc of $\gamma$ contained in $X_Z \ssm \int(Z_\tau)$. We now lift this picture to $\til X$ to produces a component $C^\tau_Z$ of the preimage of $Z_\tau$ in $\til X$, a point $\til p \in \partial C^\tau_Z$, and arcs $\til \gamma_1, \til \gamma_2, \til \gamma_3$ in the preimage of $\gamma$ 
 that cobound bigons with nested subsegments of $\partial C^\tau_Z$ containing $\til p$. (The arcs are ordered so that $\til \gamma_1$ is innermost, i.e, closest to $\til p$, and $\til \gamma_3$ is outermost.) Let $B$ be the bigon cobounded by a subarc of $\partial C^\tau_Z$ and $\til \gamma_3$.

Since these arcs are components of preimages of $\partial Y$, we can use them to produce a component $C_Y$ of the preimage of $Y$ in $\til X$ such that
\begin{enumerate}
\item some component of $\partial C_Y$ contains $\til \gamma_1$ or $\til \gamma_2$ , and 
\item $C_Y \cap B$ contains a component which is a disk whose sides alternate between subarcs of $\partial C_Y$ and subarcs of $\partial C^\tau_Z$.
\end{enumerate}
For simplicity, we assume that $\til \gamma_1$ is contained in the component of $\partial C_Y$ in item $(1)$ and denote the other component of $\partial C_Y \cap B$ that appears in item $(2)$ by $\til \eta$. (It may be that $\til \eta = \til \gamma_2$.)
See \Cref{bounded_bigons}.
\begin{figure}[htbp]
\begin{center}
\includegraphics[width=.5 \textwidth]{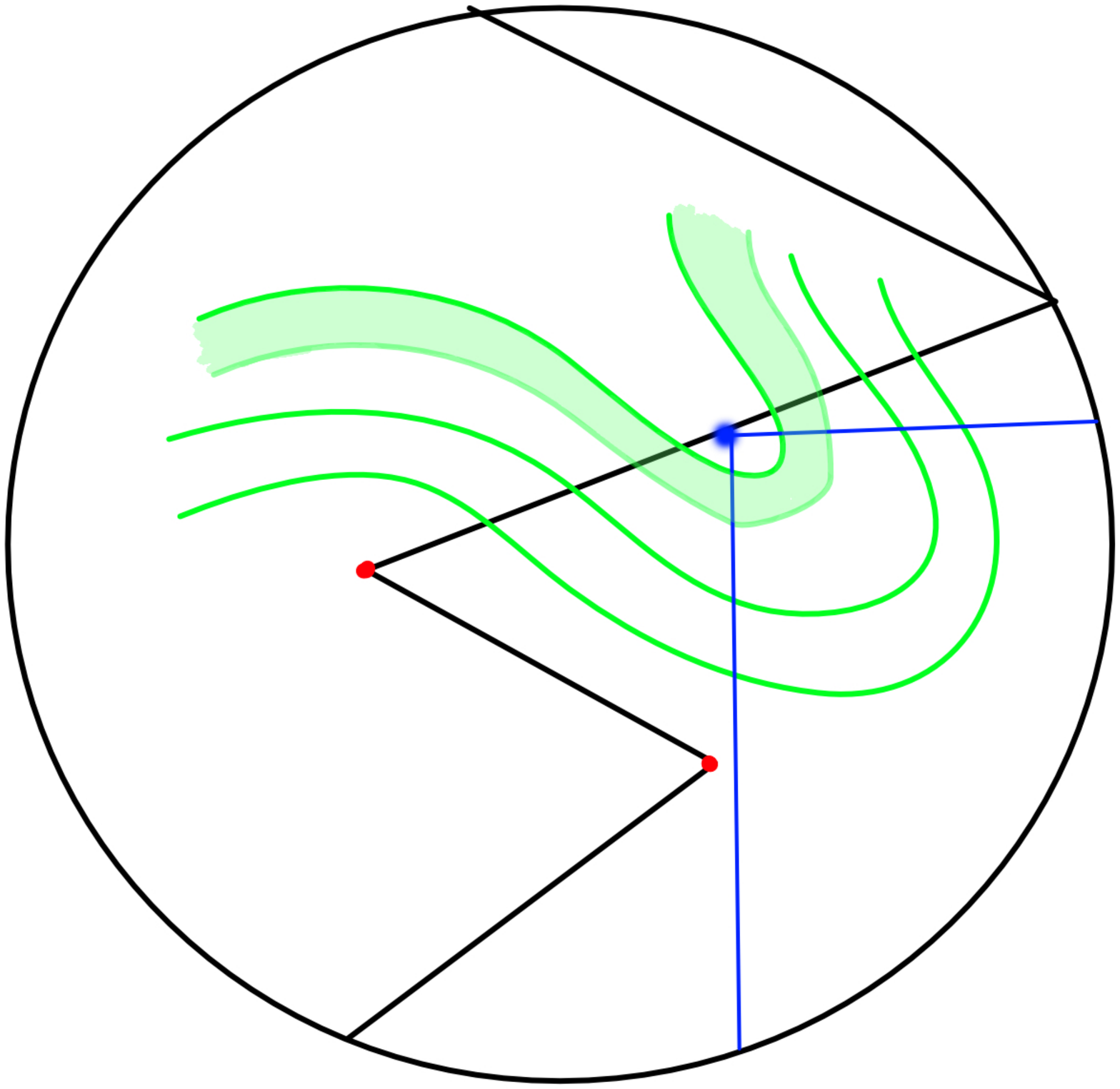}
\caption{The component $C_Y$ (green) and its intersection with $l^\pm$ (blue). The intersection of the black zig-zag arc with $\til X$ give components of $\partial C^\tau_Z$.}
\label{bounded_bigons}
\end{center}
\end{figure}

Now since the leaves of $\lambda^\pm$ are in minimal position with respect to $Z_\tau$ (\Cref{lem:overlap_tau} and the comment that follows), the intersection of each leaf of $\til \lambda^\pm$ (the lifts of $\lambda^\pm$ to $\til X$) with $C^\tau_Z$ is connected.
Hence, we may choose subrays $l^\pm$ in $\til \lambda^\pm$ which are based at $\til p$ and are disjoint from $\int (C^\tau_Z)$. By condition $(2)$ above, each of $l^\pm$ must pass first through $\til \gamma_1$ and then through $\til \eta$. Moreover, each of $l^\pm \cap C_Y$ is an arc joining distinct components of $\partial C_Y$, once $n$ is sufficiently large, again by \Cref{lem:overlap_tau} this time applied to $Y$. 

If $Y$ is also nonannular, then we conclude that the two leaf segments $\lambda^\pm \cap C_Y$ project to give homotopic components of $\lambda^+ \cap Y$ and $\lambda^- \cap Y$. 
Once $n$ is sufficiently large, these gives homotopic components of $\lambda^+ \cap \int_\tau (Y)$ and $\lambda^- \cap \int_\tau(Y)$ and by \Cref{lem:overlap_tau} we see that $d_Y(\lambda^+, \lambda^-) \le 2$. This contradiction shows that $\gamma$ is in $2$-position with $\partial Z_\tau$ and completes the proof when neither $Y$ nor $Z$ are annuli.

If $Y$ is an annulus, then as remarked above
$\int_q(Y)$ is a flat annulus and $\int_q(Y) \subset \int_\tau(Y)$. 
The argument above produces rays $l^\pm$ such that $l ^\pm \cap C_Y$ are leaf segments of $\til \lambda^\pm$. We claim that these segments project to disjoint arcs of $Y$. Otherwise, there is a deck transformation $g$ that stabilizes $C_Y$ such that $(g \cdot l^- \cap l^+) \cap C_Y$ is nonempty. In this case, we have $g\cdot B \cap B \neq \emptyset$ (where $B$ is the bigon from above) and so $g \cdot C^\tau_Z \cap C^\tau_Z \neq \emptyset$. 
Since, $Z$ is $\tau$-compatible, this implies that $g$ stabilizes $C^\tau_Z$ and contradicts our assumption that $Y$ and $Z$ overlap.

Hence, we conclude that $l ^\pm \cap C_Y$ project to disjoint leaf segments in $Y \subset \int_\tau(Y) \subset X_Y$. 
For $n$ large as above, these leaves do not intersect again before exiting $\int_\tau(Y)$ in $X_Y$.
Since leaves of $\lambda^\pm$ intersect at most once outside the maximal open flat annulus $\int(Y_q) \subset X_Y$, this produces representatives of $\pi_Y(\lambda^-)$ and $\pi_Y(\lambda^+)$ intersecting at most twice.  Hence, $d_Y(\lambda^+, \lambda^-) \le 5$ giving the same contradiction as before.


It remains to establish the proposition when $Z$ is an annulus. Since $\pi_Z(\partial Y) \neq \emptyset$, there is a $\tau$-edge in the lift of $\partial_\tau Y$ to $X_Z$ that joins boundary components of $Z_\tau$ on opposite sides of the core curve of $X_Z$. Fix any such $\tau$-edge $a$. Since $\int_q(Z) \subset \int_\tau(Z)$, $a$ crosses $\int_q(Z)\subset X_Z$ and has its endpoints in $X_Z \ssm \int_q(Z)$. Recall the definition $a^* \in \pi_Z^\tau(\partial_\tau Y)$ from the discussion following \Cref{pi Y tau def}.


Let $\gamma$ be any essential component of the lift of $\partial Y$ to $X_Z$ and let $\gamma_q$ be its geodesic representative in the $q$-metric. As before, the preimage of $\partial Y$ in $X_Z$ can be made disjoint from $\gamma_q$ since $Y$ is $q$-compatible. Since $\gamma_q$ is a $q$-geodesic, its intersection with $\int_q(Z) \subset X_Z$ is contained in a single saddle connection $b$.  Moreover, because $Y$ is $\tau$-compatible, 
any saddle connection of $\partial_q Y \subset X$
intersects any $\tau$-edge of $\partial_\tau Y \subset X$ at most once in its interior (see e.g. \cite[Theorem 5.3]{veering1}) and so $a$ and $b$ intersect at most once. If $a^*$ is any extension of $a$ to a complete $q$-geodesic in $X_Y$, then we have by the same
Gauss--Bonnet argument as for the proof of \Cref{lem:overlap_tau}
that $\gamma_q$ intersects $a^*$ at most three times.
We conclude that
\[
\diam_{Z} \big(\pi_Z^\tau(\partial_\tau Y) \cup \pi_Z(\partial Y) \big) \le 2+ d_Z(a^*,\gamma_q)\le 6,
\]
and the proof is complete.
\end{proof}


\section{From immersions to covers}
\label{sec:immersions}


When defining the subsurface projection $\pi_Y$ for a subsurface $Y \subset S$ we consider preimages of curves in the cover $S_Y$ associated to $Y$. Of course, the same operation can be done for any cover of $S$ corresponding to a finitely generated subgroup of $\pi_1(S)$. 
The main theorem of this section (\Cref{thm:immersion bound}, which is \Cref{th:intro_3}
in the introduction) gives a concrete explanation for why 
these more general projections do not capture additional information.
This will be an essential ingredient for the proof of \Cref{th:closed_sub_dichotomy}.

First, for a finitely generated subgroup $\Gamma < \pi_1(S)$, let $S_\Gamma
\to S$ be the corresponding cover.  If $Y\subset S_\Gamma$ is a
compact core the covering map restricted to $Y$ is an immersion and we
say that $Y\to S$ {\em corresponds to $\Gamma<\pi_1(S)$.}  
Lifting to
the cover induces a (partially defined) map of curve and arc graphs which we
denote $\pi_Y \colon\A(S)\to \A(Y)$, and we define $d_Y(\alpha,\beta)$ and
$\diam_Y$ accordingly. 
Note that these constructions depend on
$\Gamma$ and not on the choice of $Y$, and that $\pi_Y$ agrees with the usual
subsurface projection when $Y \subset S$.

The goal of this section is the following theorem, which may be of
independent interest:

\begin{theorem}[Immersion to cover] \label{thm:immersion bound}
There is a uniform constant $M\le 38 $ satisfying the following: 
Let $S$ be a surface and let $Y \to S$ be an immersion corresponding
to a finitely-generated $\Gamma < \pi_1(S)$. Then either
$\mathrm{diam}_Y(\A(S)) \le M$,
or $Y \to S$ is homotopic to a
finite cover $Y \to W$ for $W$ a subsurface of $S$. 
\end{theorem}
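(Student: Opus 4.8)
The plan is to analyze the immersion $Y \to S$ via the covering space $S_\Gamma \to S$ and, when $\pi_Y$ fails to be coarsely trivial, to extract from that failure an explicit bound on the index $[\pi_1(S) : \Gamma]$ in the subgroup they generate. The dichotomy should be: either $\Gamma$ has finite index in the subgroup carried by some subsurface $W \subset S$ (the first alternative), or the cover $S_\Gamma$ is ``large'' enough that any curve/arc in $S$ lifts to something that can be pushed off a fixed core $Y$, forcing $\diam_Y(\A(S))$ to be bounded. The natural mechanism is a ping-pong/intersection-number estimate: if $\alpha, \beta \in \A(S)$ are arbitrary, then $\pi_Y(\alpha)$ and $\pi_Y(\beta)$ are obtained by lifting $\alpha, \beta$ to $S_\Gamma$ and restricting to $Y$; one wants to show that unless $Y$ is a finite cover of a subsurface, the lifts to $S_\Gamma$ can always be arranged to meet $Y$ in a controlled way, so that $i(\pi_Y(\alpha), \pi_Y(\beta))$ is bounded by a universal multiple of $|\chi(Y)|$, and then apply \Cref{lem:effective_B} (or directly \cref{eqn:i bound 32}) to get $d_Y(\alpha,\beta) \le 18$, with the remaining slack up to $38$ coming from the passage through the standard arc-to-curve projection and from minimal-position adjustments.

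First I would set up the combinatorial geometry of the cover: fix a hyperbolic metric on $S$, let $Y' \subset S_\Gamma$ be the convex core of $S_\Gamma$ (so $Y'$ is homotopy equivalent to the chosen core $Y$), and record that the covering map $p \colon S_\Gamma \to S$ restricted to $Y'$ is an immersion. The key dichotomy is about the geometry of the pair $(\Gamma, \pi_1(S))$: either there is a proper essential subsurface $W \subsetneq S$ with $\Gamma$ conjugate into $\pi_1(W)$ and of finite index there — equivalently $p|_{Y'}$ factors up to homotopy through a finite covering $Y' \to W$ — or no such $W$ exists, in which case I claim the ``total'' preimage $p^{-1}(\gamma)$ of any essential $\gamma$ in $S$ has only boundedly many components of $p^{-1}(\gamma) \cap Y'$ that are essential arcs/curves in $Y'$, with the bound linear in $|\chi(Y)|$. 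Concretely: an essential component of $p^{-1}(\gamma)\cap Y'$ corresponds to a $\Gamma$-orbit of lifts of $\gamma$ to $\til S$ whose endpoints straddle the limit set $\Lambda_\Gamma$; if there were unboundedly many such orbits, a limiting/compactness argument would force $\gamma$ (or a subcurve of the preimage) to be carried by a subsurface into which $\Gamma$ is finite-index — this is where the alternative $Y \to W$ is produced. The cleanest route is probably to argue by contradiction: assume $\diam_Y(\A(S)) > M$, pick $\alpha,\beta$ witnessing this, and show the geodesic (or just the intersection pattern) between $\pi_Y(\alpha)$ and $\pi_Y(\beta)$ forces a proper invariant subsurface, hence the covering-through-$W$ conclusion.

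The main obstacle I expect is making the ``either bounded intersection or a subsurface appears'' step both \emph{explicit} and \emph{uniform over all surfaces} — Rafi--Schleimer's \cite[Lemma 7.2]{rafi2009covers} gives a non-uniform version of essentially this statement, and the whole point here is to remove the dependence of the constant on $Y \to S$. I would try to do this by working entirely with intersection numbers and Euler characteristic (as in \Cref{sec:prop_graph} and \Cref{cor:diam_bound}): realize a candidate short path in $\A(Y)$ between $\pi_Y(\alpha)$ and $\pi_Y(\beta)$ by lifted arcs, bound the number of times these lifted arcs can cross $\partial Y'$ using a Gauss--Bonnet count on $Y'$, and convert this into an intersection bound that triggers \cref{eqn:i bound 32}. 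If at any stage the count is \emph{not} bounded by a fixed linear function of $|\chi(Y)|$, that failure is exactly the combinatorial signature that $Y'$ wraps around $S$ finitely-to-one, i.e. that $p|_{Y'}$ is (homotopic to) a finite cover of $W := $ the subsurface filled by $p(Y')$. Tracking the constants through the arc-to-curve projection (the ``$4 i(a,b)+4$'' estimate in \Cref{distance and intersection}) and through \cref{eqn:i bound 32} should land the final bound at $M \le 38$; verifying that the bookkeeping actually closes with this numerical value is the routine-but-delicate part I would defer to the detailed write-up.
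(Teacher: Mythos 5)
Your proposal has the right flavor in places (a dichotomy, a Gauss--Bonnet/Euler-characteristic count, and Bowditch's uniform intersection-to-distance bound via \Cref{lem:effective_B}), and you correctly identify uniformity as the whole difficulty, but the two load-bearing steps are missing or wrong. First, your central claimed estimate --- that if $Y\to S$ does not cover a subsurface then for \emph{arbitrary} $\alpha,\beta\in\A(S)$ the lifts can be arranged so that $i(\pi_Y(\alpha),\pi_Y(\beta))$ is bounded by a universal multiple of $|\chi(Y)|$ --- is false. Take $\beta=T_\delta^n(\alpha)$ for a curve $\delta$ whose preimage meets the core essentially: the distance $d_Y(\alpha,\beta)$ stays bounded, but every essential component of $\pi_Y(\beta)$ wraps many times in the direction of the lifted $\delta$, so the intersection number of the projections (even the minimal one over components) grows linearly in $n$, with $|\chi(Y)|$ fixed. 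So one cannot prove the theorem by bounding intersection numbers of projections of the given curves; the paper instead reduces to a lamination statement (\Cref{thm:immersion bound lambda}): it bounds $d_Y(\lambda^-,\lambda^+)$ for the foliations of a quadratic differential, and deduces the curve statement afterwards by approximating $\alpha,\beta$ by filling laminations. The object whose intersection with $\lambda^\pm$ gets the $O(|\chi(Y)|)$ bound is not a projection of $\alpha$ or $\beta$ at all, but an auxiliary essential arc or loop $\sigma$ extracted from an essential \emph{re-elevation} of $\partial Y_q$ --- and such a re-elevation exists precisely when $Y$ fails to cover a subsurface, which is how the dichotomy is produced concretely rather than by the ``limiting/compactness argument'' you defer to (that compactness step is exactly what makes Rafi--Schleimer's constant non-uniform, so it cannot be waved at here).

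Second, your choice of the hyperbolic convex core removes the geometry that makes the count work. In the paper the cover carries the flat structure of a quadratic differential adapted to $\lambda^\pm$, the boundary $\partial Y_q$ is balanced by applying the Teichm\"uller flow, and two flat-geometry facts drive the estimate: Gauss--Bonnet bounds the number of cone points in $Y_q$ by $2|\chi(Y)|$, which caps how many fundamental domains of the boundary the re-elevation can traverse before closing up into a nearly simple $\sigma$ of length at most a bounded multiple of $\ell_q(\partial Y)$; and \Cref{lem:length} (vertical strips, \`a la Rafi) converts that length bound into $j_Y(\lambda^\pm,\sigma)\lesssim|\chi(Y)|$, at which point \cref{eqn:i bound 8} and \cref{eqn:i bound 32} give the explicit constants. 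There is no hyperbolic analogue of the cone-point count or of the strip decomposition, so ``bound the number of times lifted arcs cross $\partial Y'$ by Gauss--Bonnet on $Y'$'' has no actual mechanism behind it in your setup. To repair the proposal you would essentially have to rebuild the paper's route: pass to the flat structure, balance the boundary, show that failure of the covering alternative yields an essential re-elevation, and run the case analysis (essential embedded loop, peripheral loop, embedded arc) to produce the short nearly simple witness $\sigma$.
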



\Cref{thm:immersion bound} will follow as a corollary of the
following statement:

\begin{theorem} \label{thm:immersion bound lambda}
Let $Y \to S$ be
an immersion corresponding to $\Gamma < \pi_1(S)$, and let
$\lambda^\pm$ be a transverse pair of foliations without saddle connections.
If $d_Y(\lambda^-,\lambda^+)\ge 37$, then $Y \to S$ is homotopic to a
finite cover $Y \to W$ for $W$ a subsurface of $S$. 
\end{theorem}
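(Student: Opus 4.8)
The plan is to exploit the pseudo-Anosov dynamics encoded by the pair $\lambda^\pm$ together with the $q$- and $\tau$-compatibility machinery. First I would set up a quadratic differential $q$ on $S$ whose horizontal and vertical foliations are $\lambda^-,\lambda^+$; since these have no saddle connections, all of the constructions of \Cref{sec:compatibility} are available. Pull $q$ back to $S_\Gamma$ and let $Y$ be a compact core. The hypothesis $d_Y(\lambda^-,\lambda^+)\ge 37$ (which is well above the thresholds in \Cref{prop: q_compatible} and \Cref{thm: tau-compatible}) should allow the \emph{same} $q$-convex-hull / $\tau$-hull arguments to go through for the immersed core $Y\to S$, or more precisely for the cover $S_\Gamma\to S$: one forms $\CH_q(\Lambda_\Gamma)$ for the limit set $\Lambda_\Gamma$ of $\Gamma$ and checks, using the large projection distance, that its interior is a disk, so that $Y$ has a $q$-representative $Y_q\subset S_\Gamma$ and then a $\tau$-representative $Y_\tau\subset S_\Gamma$ whose boundary is a union of $\tau$-edges (saddle connections spanning singularity-free rectangles). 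The key point is that $\int_q(Y)\subset S_\Gamma$ is \emph{convex} in the $q$-metric and its boundary saddle connections are canonically determined.

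The heart of the argument is then to show that $\int_q(Y)\subset S_\Gamma$ is \emph{invariant} under the deck group $\Gamma$ in a way that forces $Y\to S$ to factor through a finite cover of a subsurface $W\subset S$. Here is the mechanism I would use. Let $C$ be a component of the preimage of $\int_q(Y)$ in $\til S=\HH^2$; it is a convex region bounded by $q$-geodesics with endpoints on $\boundary\HH^2$. The boundary leaves of $\partial C$ are determined by the limit set $\Lambda_\Gamma$, hence $C$ is canonically associated to the subgroup $\Gamma<\pi_1(S)$ and is preserved by $\Gamma$ but (a priori) by nothing larger. Now consider the \emph{stabilizer} $\Gamma'=\mathrm{Stab}_{\pi_1(S)}(C)$. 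Clearly $\Gamma\le\Gamma'$, and $C/\Gamma'$ is a surface $W'$ immersed in $S$ that $Y$ covers. The claim is that if $\Gamma\ne\Gamma'$ then $d_Y(\lambda^-,\lambda^+)$ must be small — contradicting the hypothesis — because a strict inclusion would produce an element $g\in\Gamma'\ssm\Gamma$ with $g(C)=C$ but $g$ acting nontrivially on the ends, and one can then push leaves of $\til\lambda^\pm$ through $C$ and use their minimal position (\Cref{lem:overlap_tau}.(3) applied to the core) to see $\pi_Y(\lambda^+)$ and $\pi_Y(\lambda^-)$ are a bounded distance apart, much as in the proof of \Cref{prop:proj_close}. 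Finally, $\Gamma'$ has finite index in $\mathrm{Stab}_{\pi_1(S)}(\Lambda_\Gamma)$ (which is the commensurator-type subgroup), and $W'=C/\Gamma'$ is properly immersed in $S$ with image an essential subsurface $W$; the immersion $W'\to W$ is then a \emph{finite} cover because $\Gamma'$ is finite index in $\pi_1(W)$ — finiteness of the index being exactly what large projection distance buys us, since an infinite-index inclusion would again let us slide leaves of $\lambda^\pm$ to collapse the projection.

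To make the constants explicit, the bookkeeping would be: $q$-compatibility needs $d_Y\ge 3$ (annular: $\ge 4$); $\tau$-compatibility the same; each ``slide a leaf through $C$'' step costs a bounded amount (the analogue of the $\le 2$, $\le 5$, $\le 7$ estimates in \Cref{prop:proj_close} and \Cref{lem:overlap_tau}); assembling these with the diameter bound $D=15$ from \Cref{cor:diam_bound} and a couple of applications of \Cref{lem:no_nest}-type estimates should land at $37$, with the extra $1$ in the corollary \Cref{thm:immersion bound} coming from the passage between $d_Y(\lambda^-,\lambda^+)$ and $\diam_Y(\A(S))$ (the full projection of $\A(S)$ has diameter at most $d_Y(\lambda^-,\lambda^+)+2D$-ish, but one argues directly: if $Y$ does not cover a subsurface then \emph{every} pair of curves in $\A(S)$ projects to within $M$ of the $\lambda^\pm$-configuration).

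I expect the main obstacle to be the second paragraph: proving that strict inclusion $\Gamma\lneq\mathrm{Stab}(C)$ — equivalently, $Y_q\subset S_\Gamma$ failing to be ``primitive'' — forces the projection distance to be bounded. The difficulty is precisely the one flagged after \Cref{lem:overlap_tau}, namely that $\int_\tau(Y)$ is not $q$-convex, so one has to run the leaf-sliding argument carefully on $\int_q(Y)$ (where convexity holds) and then transfer to $\int_\tau(Y)$ using the minimal-position statement, controlling the bigons via $k$-position as in \Cref{lem:no_nest}. A secondary subtlety is the annular case: when $\Gamma$ is cyclic the hull is a flat strip and one must instead argue directly that an immersed annulus with large annular projection either factors through a power map onto an embedded annulus or has bounded projection, which is a short separate computation along the lines of the annular case of \Cref{prop:proj_close}.
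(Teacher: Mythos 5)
There is a genuine gap, and it sits exactly where you flagged the ``main obstacle.'' Your argument is organized around a region $C$, a component of the preimage in $\HH^2$ of the image of $\int_q(Y)$ in $S$, which you assert is convex, bounded by $q$-geodesics, and ``canonically associated to $\Gamma$'' via $\Lambda_\Gamma$. But that preimage is the union of all $\pi_1(S)$-translates of $\int\CH_q(\Lambda_\Gamma)$, and its component containing the hull equals the hull itself only if no translate $g\,\CH_q(\Lambda_\Gamma)$ with $g\notin\Gamma$ partially overlaps it; if partial overlaps occur, $C$ is a non-convex union of hulls and is not determined by $\Lambda_\Gamma$ at all. Ruling out such partial overlaps is essentially the statement to be proved, so the setup begs the question. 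The stabilizer dichotomy you then run is also aimed at the wrong failure mode: $\Gamma\lneq\Gamma'=\mathrm{Stab}(C)$ is perfectly compatible with the conclusion of the theorem---it just means the covering $Y\to W$ has degree greater than one, and finiteness of that degree is automatic from compactness/Euler characteristic, not something ``bought'' by large projection distance. The case that must be excluded is a boundary geodesic of a translate of the hull crossing $\int_q(Y)$ essentially (in the paper's language, an \emph{essential re-elevation} of $\partial Y_q$), and your proposal never isolates this configuration.

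Even granting that reduction, the quantitative engine is missing. ``Sliding leaves through $C$ and using minimal position as in \Cref{prop:proj_close}'' does not by itself bound $d_Y(\lambda^-,\lambda^+)$: those arguments control projections of boundaries of two $\tau$-compatible subsurfaces of the same surface via nested bigons, and there is no indication how an overlapping translate of an immersed hull sets up such a configuration, nor how it would yield a constant independent of $Y$ and $S$. The paper's actual mechanism is different and is where all the content lies: from a crossing geodesic (a piece of a re-elevated boundary component inside $Y_q$) one extracts an essential arc or loop $\sigma$ that is embedded or nearly so---with a delicate analysis, via geodesic bigons in thickened annuli, of the case where the first self-intersection cuts off a peripheral loop---so that Gauss--Bonnet bounds the number of singularities it meets and hence $\ell_q(\sigma)$ by a bounded multiple of $\ell_q(\partial Y_q)$; then, after flowing so that $\partial Y_q$ is balanced, the vertical-strip inequality of \Cref{lem:length} converts this into $j_Y(\lambda^\pm,\sigma)\le O(|\chi(Y)|)$, and Bowditch's uniform bound (\Cref{lem:effective_B}, via \cref{eqn:i bound 8} and \cref{eqn:i bound 32}) gives the absolute bound $d_Y(\lambda^-,\lambda^+)\le 36$. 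None of steps (extraction of $\sigma$, the balanced/strip length-to-intersection estimate, the uniform intersection-to-distance bound) appears in your outline, so ``should land at $37$'' is unsupported; also note the paper's proof is purely flat-geometric and never needs the $\tau$-compatibility machinery you invoke.
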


Let $q$ be a quadratic differential whose horizontal and vertical
foliations are $\lambda^\pm$, and let $X$ denote $S$ endowed with
$q$. 
For curve or arc $\delta$, denote its horizontal length
with respect to $q$ by $h_q(\delta)$ and its vertical length with
respect to $q$ by $v_q(\delta)$. For a homotopy class we let $h_q$ and
$v_q$ denote the minima over all representatives.
Recall that a multicurve $\gamma$ is \emph{balanced} at $q$ if
$h_q(\gamma) = v_q(\gamma)$. For the quadratic differential $q$ and a
multicurve $\gamma$, there is always some time $t \in \mathbb{R}$ such that
$\gamma$ is balanced at $q_t$, where $q_t$ is the image of $q$ under
the Teichm\"uller flow for time $t$.

We let $X_Y=X_\Gamma$ denote the associated cover and
recall from \Cref{sec:compatibility}, the definition of
the $q$-hull $Y_q \subset X_Y$.
If $d_Y(\lambda^-,\lambda^+) \ge 5$ then $Y_q$ is embedded in $X_Y$,
by which we mean the map $\hat\iota_q$ is an embedding on $Y \ssm \partial_0 Y$ (\Cref{prop: q_compatible}).
In the language of the previous section $Y_q$ is a core of $X_Y$ and $\partial Y_q \subset X_Y$ is a collection of locally geodesic curves and proper arcs (of finite $q$-length).

If $\alpha,\beta$ are curves or properly embedded arcs in $X$, then we let
$$ j_Y(\alpha,\beta)$$
denote the minimum, over all components $a$ of $\pi_Y(\alpha)$ and $b$
of $\pi_Y(\beta)$, of the number of intersection points of $a$ and
$b$. We may also use the same notation if $\alpha,\beta$ are laminations in $S$,
or essential curves or properly embedded arcs in $Y$.
The following inequality comes essentially from \cite{rafi2005characterization}:

\begin{lemma} \label{lem:length}
Suppose $Y_q$ is embedded in the cover $X_Y$ and $\partial Y$ is balanced with respect to
$q$. Then for any essential curve or arc $\delta$ in $Y_q$, we have
\[
4 \cdot \ell_q(\delta) \ge j_Y(\lambda, \delta) \cdot \ell_q(\partial Y),
\]
for $\lambda$ equal to either $\lambda^+$ or  $\lambda^-$.
\end{lemma}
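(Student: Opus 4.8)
The plan is to follow the strategy from Rafi's characterization of short curves \cite{rafi2005characterization}, adapted to the cover $X_Y$. First I would set $\ell = \ell_q(\partial Y)$ and let $j = j_Y(\lambda,\delta)$, where $\lambda$ is, say, $\lambda^+$ (the vertical foliation). Since $\partial Y$ is balanced, we have $h_q(\partial Y) = v_q(\partial Y) = \ell/2$ (taking $\ell_q$ to be the sum of horizontal and vertical lengths, or an $L^1$-type length — I would fix the normalization to match the paper's convention). The key geometric input is that a vertical leaf segment crossing $Y_q$ must, upon entering or exiting, cross $\partial Y_q$; more precisely, each essential intersection point of a representative $b$ of $\pi_Y(\delta)$ with a representative $a$ of $\pi_Y(\lambda^+)$ forces the leaf of $\lambda^+$ to travel ``across'' $\delta$, and I want to convert $j$ intersections into a lower bound on the horizontal length $h_q(\delta)$.

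The main steps are as follows. Realize $\delta$ by its $q$-geodesic representative in $Y_q \subset X_Y$, which is possible since $Y_q$ is embedded and its boundary consists of locally geodesic curves/arcs of finite $q$-length (so the geodesic representative of $\delta$ stays in $Y_q$, or can be taken in the completion). Realize $\lambda^+$ by its vertical foliation. The intersection number $j_Y(\lambda^+,\delta)$ counts (a lower bound on) how many vertical leaves cross $\delta$; equivalently, projecting $\delta$ to the leaf space, the total variation of the horizontal coordinate along $\delta$ is at least $j$ times the ``width'' that a single crossing of $Y_q$ contributes. Here is where $\ell_q(\partial Y)$ enters: since $Y_q$ is the $q$-convex hull, every vertical leaf that meets the interior of $Y_q$ enters and leaves through $\partial Y_q$, and the horizontal extent of $Y_q$ is controlled by $h_q(\partial Y) \le \ell_q(\partial Y)$. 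So each of the $j$ essential intersections of $\delta$ with (the leaves realizing) $\lambda^+$ contributes at least a definite fraction — on the order of $h_q(\partial Y)/(\text{const})$, and symmetrically with $v_q(\partial Y)$ using balancedness — to $\ell_q(\delta)$, yielding $\ell_q(\delta) \ge \tfrac14 \, j \, \ell_q(\partial Y)$ after accounting for the factor of $2$ from balancedness and a factor of $2$ from the fact that a geodesic may cross a bounded region without contributing full length (a standard Gauss--Bonnet / "thin part" estimate, cf.\ the argument in the proof of \Cref{lem:overlap_tau} bounding intersections of $q$-geodesics outside $\int_q(Y)$).

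The step I expect to be the main obstacle is the careful bookkeeping that turns ``$j$ essential intersections'' into ``$j$ disjoint horizontal-length contributions of size $\ge \tfrac{1}{4}\ell_q(\partial Y)$ each.'' The subtlety is that intersections of $\delta$ with a single leaf, or with leaves that are close together, should not be double-counted as separate length contributions; one must partition the $q$-geodesic $\delta$ into subarcs, one per essential crossing, and argue each subarc has $q$-length at least a quarter of $\ell_q(\partial Y)$ by showing it must run monotonically across the horizontal width of $Y_q$ (or its vertical height). This is exactly the type of estimate that \cite[Section 3]{rafi2005characterization} provides, and I would cite it for the technical core, supplying the translation to the cover $X_Y$ and the balancedness normalization that produces the explicit constant $4$. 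The embeddedness hypothesis on $Y_q$ (guaranteed when $d_Y(\lambda^-,\lambda^+) \ge 5$ by \Cref{prop: q_compatible}) is what makes ``crossing $\partial Y_q$'' well-defined and prevents the crossings from overlapping in $X_Y$.
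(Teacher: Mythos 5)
You have correctly located the source (Rafi \cite{rafi2005characterization}) and the soft ingredients (balancedness gives $\ell_q(\partial Y)\le h_q(\partial Y)+v_q(\partial Y)=2h_q(\partial Y)$, and $\ell_q(\delta)\ge h_q(\delta)$), but the quantitative core is exactly the step you flag as the obstacle, and as described it does not work. The paper's proof rests on a specific structural fact from \cite[Section 5]{rafi2005characterization}: $Y_q$ contains maximal vertical strips $S_1,\dots,S_n$ with disjoint, singularity-free interiors whose widths satisfy $\sum_i h_q(S_i)=\tfrac12 h_q(\partial Y)$ (each strip has two vertical sides on $\partial Y$, whence the factor $2$). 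Since each strip is foliated by components of $\pi_Y(\lambda^+)$ and $j=j_Y(\lambda^+,\delta)$ is a \emph{minimum over all components}, the arc $\delta$ must cross every strip at least $j$ times, and each crossing of $S_i$ contributes $h_q(S_i)$ to $h_q(\delta)$; summing over strips gives $h_q(\delta)\ge \tfrac12\, j\, h_q(\partial Y)$, and balancedness finishes the proof. The horizontal length is thus collected strip-by-strip, using that $j$ bounds the intersection of $\delta$ with \emph{every} leaf component, in particular with the leaves of the wide strips wherever they sit in $Y_q$.

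Your bookkeeping is crossing-by-crossing instead: you want each of $j$ essential intersections of $\delta$ with the foliation to be charged a subarc of $\delta$ of length about $\tfrac14\ell_q(\partial Y)$, on the grounds that the subarc ``must run monotonically across the horizontal width of $Y_q$.'' This is not justified and is false in general: an essential intersection with one particular leaf forces no definite horizontal progress, and the horizontal extent of $Y_q$ near that crossing need not be comparable to $h_q(\partial Y)$ — the horizontal boundary length may be concentrated in strips far from where $\delta$ meets that leaf, and in such configurations the lemma survives only because some \emph{other} leaf component then misses $\delta$, forcing $j=0$, which is information a per-crossing accounting cannot see. Relatedly, your statement that ``the horizontal extent of $Y_q$ is controlled by $h_q(\partial Y)\le\ell_q(\partial Y)$'' is an upper bound, whereas what is needed is a lower bound on the length picked up per crossing; and the factor of $2$ you attribute to a Gauss--Bonnet/thin-part estimate is not where the constant comes from (it comes from the two vertical sides of each strip and from $\ell_q\le h_q+v_q=2h_q$ under balancedness). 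Deferring ``the technical core'' to \cite{rafi2005characterization} without identifying the strip decomposition and the resulting equality $\sum_i h_q(S_i)=\tfrac12 h_q(\partial Y)$ leaves the proof with a genuine gap.
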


\begin{proof}
We show the inequality for $\lambda^+$. First recall that $Y_q$ 
contains a union of 
maximal vertical strips $S_1, \ldots, S_n$ with disjoint, singularity--free interiors 
having the property that $h_q(\partial Y) = 2 \sum_i h_q(S_i)$. Here, $h_q(S_i)$ denotes the width of the strip $S_i$. For details of this construction, see \cite[Section 5]{rafi2005characterization}. If $\delta$ is an essential curve or arc of $Y_q$, then $\delta$ crosses each strip $S_i$ at least $j_Y(\delta,\lambda^+)$ times since each strip is foliated by segments of $\lambda^+$. Hence 
\[
h_q(\delta) \ge \sum_i h_q(S_i) \cdot j_Y(\delta,\lambda^+) = \frac{1}{2} h_q(\partial Y) \cdot j_Y(\delta,\lambda^+).
\]
Since $\partial Y$ is balanced, $h_q(\partial Y) = v_q(\partial Y)$
and so $\ell_q(\partial Y) \le 2h_q(\partial Y)$.
 We conclude 
\[
\ell_q(\delta) \ge h_q(\delta) \ge \frac{1}{4} \ell_q(\partial Y) \cdot j_Y(\delta,\lambda^+)
\]
as required.
\end{proof}

We now proceed with the proof of  \Cref{thm:immersion bound lambda}.

\begin{proof}
We may suppose that $d_Y(\lambda^-,\lambda^+)\ge 5$
so that $Y_q$ is embedded in $X_Y$. If $Y$ is an annulus, then
$\int(Y_q) \subset X_Y$ is a flat annulus which must cover a flat 
annulus in $X$. So we now suppose that $Y$ is not an annulus.
We may further assume, applying the Teichm\"uller flow to $q$ if
necessary, that $\partial Y_q$ is balanced.

Let $P$ be either $S^1$ or $\RR$.
If $\gamma \colon P \to X_Y$ is a parameterization of a
 boundary component of $Y_q$ (see \Cref{Y_q_cartoon_2}), then
we say that a \emph{re-elevation} of $\gamma$ is any lift to $X_Y$  of
$\mathbb{R} \to P \overset{\gamma}{\to} X_Y \overset{p}{\to} X$,
where $\mathbb{R} \to P$ is the universal cover, and
we say that the re-elevation is \emph{essential} if it meets $Y_q$
essentially. 
From nonpositive curvature of the metric, a re-elevation is essential if and only if it meets $\int(Y_q)$. 

We note that with this terminology, $Y \to X$ covers a subsurface of
$X$ if and only if there are no essential re-elevations of components
of $\partial Y_q$, since in this case $p^{-1}(p(\partial Y_q)) \cap Y_q
= \partial Y_q$ (see \cite[Lemma 6.6]{veering1}).  Thus our goal now
is to prove that if $d_Y(\lambda^-,\lambda^+)$
is sufficiently large, independent of $Y$ and $X$, then there are no
essential  re-elevations of $\partial Y_q$.

Let $g \colon \mathbb{R}\to X_Y$ be a re-elevation of $\gamma$, let $(a,b)\subset
\R$ be a component of $g^{-1}(\int_q Y)$, and let 
$\widetilde{\gamma}$ denote the restriction of $g$ to $[a,b]$. This
is a geodesic path (possibly with self-intersection) with
endpoints in $\partial Y_q$, and if $g$
is essential then $\widetilde\gamma$ may be chosen to be essential.


We now look for a restriction of $\widetilde\gamma$ to an essential simple
arc or curve. Let $d \in [a,b]$ be the supremum over $t \in [a,b]$ for
which $\widetilde{\gamma}|_{[a,t]}$ is an embedding.

\subsubsection*{\bf Case 1:} $d<b$. Then there exists
$c\in[a,d)$ such that $\widetilde{\gamma}(d) = \widetilde{\gamma}(c)$.
  Thus $x=\widetilde{\gamma}|_{[c,d]}$ is an embedded loop.

\subsubsection*{\bf Case 1a:} $x$ is an essential loop, which we name $\sigma$. 

In the case where $P = S^1$, divide $\R$ into fundamental domains for 
the covering map $\R\to
S^1$, so that $c$ is a boundary point of a fundamental domain,
and let $l\ge 0$ be the number of full fundamental domains
contained in $[c,d]$. When $P = \RR$, set $l=0$.
Hence, as $\gamma$ is a component of $\partial_q Y$,
\[
\ell_q(\sigma) \le \ell_q(\widetilde{\gamma}|_{[c,d]}) \le (l+1) \cdot \ell_q(\partial_q Y).
\]
By \Cref{lem:length},
\[
 j_Y(\lambda^\pm, \sigma) \cdot \ell_q(\partial Y) \le 4 \cdot \ell_q(\sigma) \le 4 (l+1) \cdot \ell_q(\partial Y)
\]
and so $ j_Y(\lambda^\pm, \sigma) \le 4 (l+1)$.

The Gauss--Bonnet theorem for the Euclidean cone metric on $Y_q$ implies that the number of singularities in the interior of $Y_q$ is no more than $2|\chi(Y)|$ and since $\widetilde{\gamma}|_{[c,d]}$ is an embedded loop, it visits each of these singularities of $Y_q$ at most once. As $\widetilde{\gamma}|_{[c,d]}$ contains at least $l$ singularities interior to $Y_q$, we obtain $l \le 2|\chi(Y)|$. Combining this with the inequality arrived at above, we conclude that 
\begin{equation}\label{bound jY}
j_Y(\lambda^\pm, \sigma) \le 4(2\cdot |\chi(Y)|+1).
\end{equation}
We now invoke \Cref{lem:effective_B} (and in particular \cref{eqn:i bound 8})
to conclude that 
$d_Y(\lambda^\pm, \sigma) \le 15$. Hence, $d_Y(\lambda^-,\lambda^+) \le
30$ as required.


\subsubsection*{\bf Case 1b:}
If $x=\widetilde\gamma|_{[c,d]}$ is inessential, it still cannot be
null-homotopic since $g$ is a geodesic path, so it must be
peripheral. That is, either $x$ bounds a punctured disk in $Y_q$ or
$x$ cobounds an annulus $A$  with a boundary component $u$ of $Y_q$. We
claim that in the latter case the endpoint
$\widetilde{\gamma}(a)$ does not lie on $u$.

\realfig{annulus-tangent}{When $\widetilde \gamma$ meets itself but glances off the annulus $A$, we obtain a geodesic loop homotopic to the boundary component $u$.}

Suppose otherwise. Then there are two possibilities. If $\widetilde\gamma|_{[d,d+\epsilon)}$ does not enter $A$, then $x$ is a
  $q$-geodesic loop: at $\widetilde\gamma(c)$ it subtends an angle of at
  least $\pi$ inside $A$, and at $\widetilde\gamma(d)$ it subtends an
  angle of at least $\pi$ outside $A$. (See \Cref{annulus-tangent}). But this is 
  a contradiction -- $x$ cannot be a geodesic representative of $u$
  and not equal to $u$.

\realfig{annulus-bigon}{On the right is the annulus $A$ formed when $\gamma$ crosses itself at its first intersection point. Its thickening $A'$ is indicated by the dotted circle. At left is the universal cover of $A'$ where one can see the bigon between translates of the lift of $\widetilde\gamma$.}

If $\widetilde\gamma|_{[d,d+\ep)}$ enters $A$ for small $\ep$, then we claim
there is an immersed $q$-geodesic bigon cobounded by arcs of 
$\widetilde{\gamma}$, which contradicts nonpositive curvature.
Indeed, thicken $A$ slightly to an annulus $A'$ and 
let $t>d$ be
the smallest value for which $\widetilde\gamma(t)$ meets $\partial A' \ssm u$. Consider
the lifts of $\widetilde\gamma|_{[a,t]}$ to the universal cover $\widetilde A'$
of $A'$. This cover is an infinite strip, and the lifts form a
$\Z$-periodic sequence of arcs connecting the two boundary
components. One may order the lifts by their endpoints on (either)
boundary component, and then we see that, since each lift crosses the
lift above it at a preimage of $\widetilde\gamma(d)$, two consecutive
lifts must intersect at least twice. This produces an immersed bigon downstairs, and our
contradiction (see \Cref{annulus-bigon}).


\realfig{sigma-arc}{$\sigma$ is formed from the self-intersection of $\widetilde\gamma$. The inner circle represents $u$ or the puncture.}
We conclude that the concatenation $\sigma$ of $ \widetilde{\gamma}|_{[a,d]}$ followed by $\widetilde{\gamma}|_{[a,c]}$  
traversed in the opposite direction
is an essential arc of $Y_q$ which is homotopic to an embedded arc
(\Cref{sigma-arc}).
Moreover, since $x$ is embedded, $\sigma$ meets no singularity
interior to $Y_q$ more than twice. Hence, if we let $l\ge 0$ be the
number of fundamental domains in $[a,d]$, defined exactly as above,
then $\ell_q(\sigma) \le 2 \ell_q(\widetilde{\gamma}|_{[a,d]}) \le 2(l+1)
\cdot \ell_q(\partial Y)$. Using the same reasoning as in the previous
case, we conclude that $ j_Y(\lambda^\pm, \sigma) \le 8 (l+1)$. Using
the fact that $\sigma$ meets no singularity of $Y_q$ more than twice, we have
$l \le 4 |\chi(Y)|$ and hence 
\[
j_Y(\lambda^\pm, \sigma) \le 8 (4\cdot |\chi(Y)|+1).
\]
This time we apply \cref{eqn:i bound 32} to conclude that $d_Y(\lambda^\pm \sigma) \le 18$ and so $d_Y(\lambda^-,\lambda^+) \le 36$. 

\subsubsection*{\bf Case 2:}  $d=b$. This final case is handled just like Case 1a,
except that $\sigma = \widetilde\gamma$ is an essential arc with embedded
interior, rather than an essential loop. 
\end{proof}

\Cref{thm:immersion bound} now follows as a corollary: 

\begin{proof}
Suppose that $Y$ does not cover a subsurface of $S$.
Let $\alpha, \beta$ be two curves with nontrivial projection to $Y$ and let $(\lambda^\alpha_n)_{n\ge0}$ and $(\lambda^\beta_n)_{n\ge0}$ be any sequences of filling laminations such that $\alpha$ is contained the Hausdorff limit of $\lambda^\alpha_n$ and $\beta$ is contained in the Hausdorff limit of $\lambda^\beta_n$. For example, if $f \colon S \to S$ is any pseudo-Anosov homeomorphism then we can take the stable and unstable laminations of $T^n_\alpha \circ f \circ T^n_\beta$ which is pseudo-Anosov for all but finitely many $n \in \Z$. Here $T_\gamma$ denotes the Dehn twist about the curve $\gamma$.

Now let $q_n = q(\lambda^\alpha_n,\lambda^\beta_n)$ be the holomorphic
quadratic differential whose vertical and horizontal foliations are
determined by $\lambda^\alpha_n$ and $\lambda^\beta_n$,
respectively. By \Cref{thm:immersion bound lambda}, then $d_Y(\lambda^\alpha_n,\lambda^\beta_n)\le 36$ for each $n\in \Z$. However, for large enough $n\ge0$, $\pi_Y(\alpha) \subset \pi_Y(\lambda^\alpha_n)$ and $\pi_Y(\beta) \subset \pi_Y(\lambda_n^\beta)$ and we conclude that $d_Y(\alpha,\beta) \le 36$. Since $\alpha$ and $\beta$ were arbitrary curves with nontrivial projection to $Y$, we conclude that $\mathrm{diam}_Y(\A(S)) \le  38$.
\end{proof}


\section{Uniform bounds in the veering triangulation}
\label{sec:bounds}

In this section we produce two estimates relating sections of the veering triangulation to
subsurface projections in the \emph{original} surface $X$. In \Cref{prop:closed_distance},
we show that for a $\tau$-compatible subsurface $Y$ of $X$, the $\pi_Y^\tau$-projections
of the top and bottom sections of $T(\partial_\tau Y)$ are near the projections of
$\lambda^+$ and $\lambda^-$, respectively,  to $\A(Y)$. In \Cref{lem:slowed_progress}, we
show that if sections $T_1$ and $T_2$ have sufficiently far apart $\pi_Y^\tau$-projections
to $\A(Y)$, then they must differ by at least $|\chi(Y)|$ tetrahedron moves in the veering
triangulation. 
Both estimates will be important ingredients in the proof of \Cref{th:bounding_projections_closed}.

\subsection{Top/bottom of the pocket and distance to $\lambda^\pm$}


Let $Y \subset X$ be a $\tau$-compatible subsurface as in \Cref{sec:compatibility}.
Since $\partial_\tau Y$ is a collection of edges of $\tau$ with disjoint interiors in $X$, 
\Cref{lem:extension} gives that $T(\partial_\tau Y) \neq \emptyset$.
Let $T^\pm \in T(\partial_\tau Y)$ denote the top and bottom sections containing $\partial_\tau Y$.


The following proposition is analogous to \cite[Proposition 6.2]{veering1} in the fully punctured setting. However, more work is needed here to relate the projection of $T^+$ to the projection of $\lambda^+$ in the curve and arc graph of $Y$.

\begin{proposition}[Compatibility with $\tau$ and distance to $\lambda^\pm$] \label{prop:closed_distance}
Let $T^\pm$ be the top and bottom sections in $ T(\partial_\tau Y)$. For any section $Q \ge T^+$, $d_Y(Q, \lambda^+) \le D+1$ and for any $Q \le T^-$, $d_Y(Q,\lambda^-) \le D+1$.
\end{proposition}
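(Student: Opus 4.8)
\textbf{Proof proposal for \Cref{prop:closed_distance}.}

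The plan is to treat the two statements symmetrically; I will describe the argument for $T^+$ and $\lambda^+$. The key object is a suitable sequence of sections connecting $T^+$ up toward a section whose $\tau$--edges project close to $\lambda^+$, combined with the fact (following \Cref{cor:diam_bound}, as recorded in the text after \eqref{A_S for section}) that the full projection $\A_X(T)$ of any single section has $X$--diameter at most $D$. First I would recall Agol's sweep-out (\Cref{lem:AG_sec}): there is a sequence $(T_i)_{i\in\ZZ}$ of $\phi$--sections with $T_{i+1}$ obtained from $T_i$ by upward tetrahedron moves, so that $T_i \le T_j$ for $i \le j$, and $\bigcup_i T_i = \N$ while the edges of $\Pi_*(T_i)$ converge (in the appropriate Hausdorff/$\mathcal{PML}$ sense) to $\lambda^+$ as $i \to +\infty$; concretely, for $i$ large the $\tau$--edges of $\Pi_*(T_i)$ have small $q$--horizontal length after rescaling and hence project close to $\lambda^+$ in $\A(Y)$ once $Y$ is $\tau$--compatible. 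Because $T(\partial_\tau Y)$ has a top element $T^+$ and, by \Cref{lem:connectivity}, any two elements of $T(\partial_\tau Y)$ above a given one are joined by upward tetrahedron moves inside $T(\partial_\tau Y)$, I would first reduce to comparing $T^+$ with a high sweep-out section.

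The heart of the argument is then: (i) show that $T^+$ and $\lambda^+$ are close in $\A(Y)$, and (ii) upgrade this to ``any $Q \ge T^+$''. For (ii), note that if $Q \ge T^+$ then $Q$ and $T^+$ both contain\,---\,no, more carefully, $T^+ \wedge Q = T^+$, and the region $U(T^+, Q)$ sits above $\partial_\tau Y$. The point is that $\int_\tau(Y)$ is cut out by $\partial_\tau Y$, which lies in both $T^+$ and is below $Q$; so the part of $Q$ lying inside $\int_\tau(Y)$ (i.e.\ the $\tau$--edges of $\Pi_*(Q)$ meeting $\int_\tau(Y)$) is "above" the corresponding part of $T^+$. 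Using \Cref{lem:up_is_up} and \Cref{lem:overlap_tau}(1), any $\tau$--edge of $Q$ crossing into $\int_\tau(Y)$ is nearly simple in the proper graph $\int_\tau(Y)\cap \Pi_*(Q^{(1)})$, which has at most $2|\chi(Y)|$ vertices, so $\pi_Y^\tau(Q)$ has diameter $\le D$; and since such an edge sits above $\partial_\tau Y$, its projection is "on the $\lambda^+$ side." So it suffices to bound $d_Y(T^+, \lambda^+)$ and to show $d_Y(T^+, Q) \le D$ for all $Q\ge T^+$, whence $d_Y(Q,\lambda^+)\le d_Y(Q,T^+)+d_Y(T^+,\lambda^+)$ — but this naive triangle inequality gives $2D$, too much. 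So instead I would argue directly: both $\pi_Y^\tau(Q)$ and $\pi_Y(\lambda^+)$ are computed from arcs of $\int_\tau(Y)$, and the $\tau$--edges of $Q$ meeting $\int_\tau(Y)$, together with the leaves of $\lambda^+$ (which by \Cref{lem:overlap_tau}(3) are in minimal position with $\int_\tau(Y)$), form a graph in $\int_\tau(Y)$ with few vertices; applying \Cref{cor:diam_bound}-type intersection counting to one arc of each gives $d_Y(\pi_Y^\tau(Q), \pi_Y(\lambda^+))\le D$ directly, and the "$+1$" absorbs the step from $\pi_Y^\tau(Q)$ to the actual vertex set $\A_X(Q)\cap \A(Y)$ or the passage to the exhaustion $Y_n$ of $\int_\tau(Y)$.

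The main obstacle I expect is exactly the coordination between $Q$ and $\lambda^+$ \emph{inside} $\int_\tau(Y)$: because $\int_\tau(Y)$ is not $q$--convex (as the text warns after \Cref{fig:thull-twice}), one cannot simply say that $\tau$--edges of $Q$ above $\partial_\tau Y$ intersect a leaf of $\lambda^+$ at most once. The resolution should mimic the proof of \Cref{prop:proj_close}: lift to $\til X$, use that the intersection of any leaf of $\til\lambda^\pm$ with a component of the preimage of $\int_\tau(Y)$ is connected (the remark after \Cref{lem:overlap_tau}), and use that $Q \ge T^+ \ge \partial_\tau Y$ together with \Cref{lem:up_is_up} to control how a $\tau$--edge of $Q$ enters $\int_\tau(Y)$ relative to $\partial_\tau Y$ and relative to $\lambda^+$ (which is the ``limit'' of the slopes going up). Thus a $\tau$--edge $e$ of $Q$ that crosses $\int_\tau(Y)$ has slope above every $\tau$--edge of $\partial_\tau Y$, hence (taking a high sweep-out section $T_N$ with $e < T_N$ if necessary) one gets that $e$ and a leaf of $\lambda^+$ cross few times in $\int_\tau(Y)$, with the count controlled by the number of singularities $\le 2|\chi(Y)|$ and the "nearly simple" hypothesis. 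Feeding this into \eqref{eqn:i bound 8} gives $d_Y \le 15 = D$, and accounting for the arc-graph bookkeeping (e.g.\ replacing a component of $\pi_Y^\tau(Q)$, which is a set of diameter $\le D$, by a single vertex, and similarly passing between $Q$ and the exhausting subsurfaces $Y_n$) costs at most the extra $1$. I would handle the annular case for $Y$ separately, as in \Cref{lem:overlap_tau} and \Cref{prop:proj_close}, using flat-cylinder and Gauss--Bonnet estimates, which there give even better constants.
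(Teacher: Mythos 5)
Your proposal has a genuine gap at its central step. You correctly identify that the naive triangle inequality through $T^+$ only yields $2D$, but your replacement — ``the $\tau$--edges of $Q$ meeting $\int_\tau(Y)$, together with the leaves of $\lambda^+$, form a graph with few vertices, so \Cref{cor:diam_bound}-type intersection counting gives $d_Y(\pi_Y^\tau(Q),\pi_Y(\lambda^+))\le D$ directly'' — does not work. \Cref{cor:diam_bound} bounds intersections between two arcs carried by the \emph{same} proper graph, because such arcs can only cross near the (few) vertices. Leaves of $\lambda^\pm$ do not pass through singularities and are not carried by the graph of $Q$--edges, so an arc of $\pi_Y^\tau(Q)$ and a leaf arc of $\lambda^+\cap\int_\tau(Y)$ can cross many times away from any vertex; there is no bound on $i(a,l)$ linear in $|\chi(Y)|$ (edges of sections high above $T^+$ are long and nearly parallel to $\lambda^+$, and two long nearly-parallel geodesic segments in a subsurface can wrap and cross each other a number of times unrelated to $|\chi(Y)|$). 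Moreover, even if you could bound the distance between \emph{one} arc of each set by $D$, the definition $d_Y(Q,\lambda^+)=\diam_Y(\pi_Y^\tau(Q)\cup\pi_Y(\lambda^+))$ would then only give $D+D+1$; to land on $D+1$ you essentially must exhibit an element of $\pi_Y^\tau(Q)$ that \emph{is} (isotopic to) an arc of $\lambda^+\cap\int_\tau(Y)$, so that the two projection sets share a vertex. Your ``$+1$ absorbs the bookkeeping'' remark does not supply this.

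Producing that common element is exactly the content of the paper's argument, and it is combinatorial rather than intersection-theoretic: starting from $T^+=T_0$ and running the upward moves to $Q=T_j$, one inductively peels off a nested sequence of subcomplexes $Y_0\supset Y_1\supset\cdots\supset Y_{n+1}$ of the triangulation, showing that each complementary component $K$ of $\int_\tau(Y_0)\ssm Y_i$ is a disk foliated by $\lambda^+$-leaves running from a path $p_K\subset\partial_\tau Y_i$ to a fixed edge $\sigma_K\subset\partial_\tau Y$, and that every triangulation edge entering $K$ from $p_K$ crosses $\sigma_K$ from top to bottom. The termination of this process yields an edge path in $\Pi_*(Q)$ (persisting under all further flips via the edges $f_i$ crossing $\sigma_K$) that defines the same element of $\A(Y)$ as a leaf arc $l$ of $\lambda^+$, whence $d_Y(Q,\lambda^+)\le\diam_Y(\pi_Y^\tau(Q))+\diam_Y(\pi_Y(\lambda^+))\le D+1$. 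Your sweep-out/``edges converge to $\lambda^+$'' reduction also quietly assumes a quantitative version of this same fact, so it cannot substitute for it. The annular and singularity-free cases you defer are indeed easier, but the nonannular case with interior singularities — the heart of the proposition — is not reached by your method.
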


Recall that $d_{Y}(Q,\lambda^-)$ means $d_Y(\pi_Y^\tau(Q), \pi_Y(\lambda^-))$. 

\begin{proof}
We begin by remarking that if $\int_\tau(Y)$ contains no singularities of $q$ other than punctures (i.e. if $\PP_Y = \sing(q) \cap \int_\tau(Y)$ in $\overline X$), then the argument from \cite[Proposition 6.2]{veering1} carries through and gives a better constant. This includes the situation where $Y$ is an annulus.

In the general (nonannular) case, we show that there is an embedded edge path $p$ in $\Pi_*(Q) = \Delta_{Q}$ which projects to an essential arc of $\int_\tau (Y)$ and is isotopic to a properly embedded arc of $\int_\tau (Y) \cap \lambda^+$. 
Together with \Cref{lem:overlap_tau}, this shows that
 \[d_{Y}(Q,\lambda^+) = d_{Y}(\pi_Y^\tau(Q),\lambda^+) = \diam_{Y}(\pi_Y^\tau (Q)) + \diam_{ Y}(\lambda^+) \le D+1\]
as required (the proof for $\lambda^-$ is identical). 


\realfig{triangle-push}{The first step: The upward diagonal exchange along the boundary and removing the triangle $D_i$. Vertices lie in $\sing(q) \cup \PP$.}

Let $T_0, T_1, T_2 , \ldots$ be any sequence of sections through upward tetrahedron
moves starting with $T_0 = T^+$  such that $Q = T_j$ for some $j\ge 0$
(\Cref{lem:connectivity}), and let $\Delta_i = \Pi_*(T_i)$ be the corresponding
$\tau$-triangulations of $X$. Note that $\Delta_{i+1}$ is obtained
from $\Delta_i$ by a single diagonal exchange.

Since $T_0 \in T(\partial_\tau Y)$, $\Delta_0$ contains a subcomplex $Y_0$ which triangulates the image of $Y_\tau$ under the covering $X_Y \to X$.
We inductively construct a sequence of subcomplexes $Y_0 \supset Y_1  \supset \ldots \supset Y_{n+1}$ of $\Delta_0 \subset X$ satisfying certain conditions. For this, we say that each $Y_i$ comes with a \emph{boundary} $\partial_\tau Y_i$, which is defined inductively below, and we set $\int_\tau (Y_i) = Y_i \ssm \partial_\tau Y_i$. For $Y_0 $, $\partial_\tau Y_0 = \partial_\tau Y$ so that $\int_\tau (Y) = Y_0 \ssm \partial_\tau Y_0$ is isotopic to $Y$.
 
To construct $Y_{i+1}$ from $Y_{i}$ observe that the upward exchange from $\Delta_{i}$ to $\Delta_{i+1}$ either occurs along an edge not meeting $Y_{i}$, in which case we set $Y_{i+1} = Y_{i}$, or 
it must occur along an edge $e_{i}$ of $\partial_\tau Y_{i}$. Otherwise, the edge $e_{i}$ lies in the interior of $Y_{i} \subset Y_0$ where it is wider (with respect to $q$) than the other edges in its two adjacent triangles. Hence, the same must be true in the triangulation $\Delta_0$, and we see that $\Pi^*(e_{i})$ is upward flippable in $T^+$. This gives a section $T' \in T(\partial_\tau Y)$ with $T^+ < T'$, contradicting the definition of $T^+$.

Let $Q_i$ be the quadrilateral in $\Delta_i$ whose diagonal is $e_i$. Writing $Q_i$ as
two triangles adjacent along $e_i$, at least one of them, call it
$D_i$, is contained in $Y_i$,
as in \Cref{triangle-push}. If $D_i$ is the only triangle in $Y_i$, set 
$Y_{i+1} = Y_i \ssm (\mathrm{int}(D_i) \cup e_i)$ and
$\partial_\tau Y_{i+1} =( \partial_\tau Y_{i} \cup \partial D_{i}) \ssm e_{i}$.

If the other triangle $D'_i$ of $Q_i$ is also in $Y_i$, set
$Y_{i+1} = Y_i \ssm \int(Q_i)$ and $\boundary_\tau Y_{i+1} = \boundary_\tau Y_i \union
\boundary Q_i \ssm e_i$. 

Let $v_i$ be the vertex of $D_i$ opposite $e_i$ (and $v'_i$ the vertex of $D'_i$ opposite
$e_i$ if we are in the second case). If
$v_i$ (or $v'_i$) is contained in $\partial_\tau  Y_i \cup \PP$, 
or $v_i = v'_i$,
then we set $n=i$, so that $Y_{n+1}$
is the last step of the construction.

\realfig{Y1-7}{An example sequence of $Y_i$ (indicated in blue). A component $K$ of $\int_\tau(Y_0)\ssm Y_i$ is
  indicated with its $\lambda^+$ foliation in grey. $\sigma_K$, dotted, is the edge $e_0$.}


The construction has the following properties for $i\le n+1$:
\begin{enumerate}
  \item $Y_i$ is a subcomplex of each of $\Delta_0, \Delta_1, \ldots, \Delta_i$. Moreover, the
    diagonal exchange from $\Delta_{i-1}$ to $\Delta_{i}$ replaces an edge $e_{i-1}$ in either
    $\boundary_\tau Y_{i-1}$ or in the complement of $Y_{i-1}$. 
  \item
    Each component $K$ of $\int_\tau(Y_0)\ssm Y_i$ is a disk foliated by leaves of
    $\lambda^+$. Moreover, $\partial K$ is composed of two arcs $p_K$ and $\sigma_K$,
    where $\sigma_K$ is a $\tau$-edge in $\partial_\tau Y$ and $p_K$ is a path in
    $\partial_\tau Y_i$, and each leaf of the foliation of $K$ meets both $p_K$ and
    $\sigma_K$ at its endpoints.
  \item For each component $K$ of 
$\int_\tau(Y_0)\ssm Y_i$ and each interior vertex $v$ of $p_K$, there is at least one edge $e$ of
    $\Delta_i$ entering $K$ from $v$, and every such edge crosses $\sigma_K$ from top to
    bottom, so that $e > \sigma_K$.
\end{enumerate}

First note that $Y_0$ satisfies the properties for $i=0$ 
and that property $(1)$ holds for all $i\ge 0$ by construction.

Assume property (2) holds for $i\le n$ and let us prove it for $i+1$.
Let $D_i$ be the disk removed
to obtain $Y_{i+1}$, and let $p_{D_i}$ be the other two sides of $\boundary D_i$.

First suppose $e_i$ is in
$\boundary_\tau Y_0$ (which in particular holds for $i=0$). The two edges in $p_{D_i}$ cannot
be in the boundary of any other component $K'$ of $\int_\tau(Y_0) \ssm Y_i$, because
then for some $j< i$ there would have been a $D_j$ whose third vertex $v_j$ was either a
puncture or on $\partial_\tau Y_0$, which implies $j>n$. Thus $D_i$ itself is a component
of $\int_\tau(Y_0)\ssm Y_{i+1}$, and (2) evidently holds, with $\sigma_{D_i} = e_i$. 

Now if $e_i$ is not in $\boundary_\tau Y_0$, it must lie in $p_K$ for some component $K$
of $\int_\tau(Y_0) \ssm Y_i$. 
We have again that $p_{D_i}$ cannot share edges with any
other component of $\int_\tau(Y_0) \ssm Y_i$, and so we obtain a component $K'$ of
$\int_\tau(Y_0)\ssm Y_{i+1}$ by adjoining $D_i \ssm p_{D_i}$ to $K$. The boundary path
$p_{K'}$ is obtained from $p_K$ by replacing $e_i$ by $p_{D_i}$, and the $\lambda^+$
foliation of $K$ extends across $e_i$ to $K'$. The edge $\sigma_{K'}$ is just $\sigma_K$.

\realfig{Prop3}{The proof of property (3)}

Now we consider property (3). Breaking up into cases as in the proof of (2),
consider first the case that $e_i$ is in $\boundary_\tau Y_0$ (\Cref{Prop3}(a)).
Let $Q_i$ be the
quadrilateral defining the move $\Delta_i \to \Delta_{i+1}$. Then the new edge $e'_{i}$ is
the other diagonal of $Q_i$ and must cross $e_i$. This shows that (3) holds for the new
component $K = \int(D_i)$ of $\int_\tau(Y_0) \ssm Y_{i+1}$.

If $e_i$ is in $p_K$ for some component $K$ of $\int_\tau(Y_0) \ssm Y_i$, consider $Q_i$
again (\Cref{Prop3}(b)) and let $D'_i$ be the complementary triangle to $D_i$ in $Q_i$. The two
complementary edges to $e_i$ in $\boundary D'_i$ must, by inductively applying (3), cross
$\sigma_K$. It follows that the new edge $e'_i$ also crosses $\sigma_K$, thus proving (3)
for $i+1$. Here, we are using the general fact that if an edge of a $\tau$-triangle crosses $\sigma$ from top to bottom, then so does the tallest edge of that triangle.

Finally suppose $e_i$ is outside of $Y_i$. If $Q_i$ is not adjacent to $Y_i$ at all then
nothing changes and (3) continues to hold. If $Q_i$ shares a vertex $w$ with $p_K$ for some
component $K$ of $\int_\tau(Y_0) \ssm Y_i$, we have a configuration like 
\Cref{Prop3}(c) or (d). In (c), $w$ is on $e_i$. 
The new edge $e'_i$, which connects to another vertex on $p_K$, must therefore also cross $\sigma_K$ (by transitivity of $<$).
Moreover $w$ is
still adjacent to one of the boundary edges of $Q_i$ which also passes through
$\sigma_K$. Thus (3) is preserved at all vertices of $p_K$. In (d), $w$ is the vertex
meeting $e'_i$ and again we must have $e'_i$ crossing $\sigma_K$. This concludes the proof
of properties (1-3). 


\medskip

\realfig{last-Y}{Two examples of $Y_{n+1}$ and the corresponding $\lambda^+$ arcs and
  $\Delta_i$ arcs. In the first case the point $v_n$ is a puncture so only one leaf $l_K$
and path $q_K$ is needed. In the second case $v_n$ lies on the boundary of two components
of $\int_\tau(Y_0) \ssm Y_{n+1}$.}

The sequence terminates with the diagonal exchange from $\Delta_n$ to $\Delta_{n+1}$
along an edge $e_n$ whose associated triangle $D_n$ has its opposite vertex $v_n$ in
either $\PP$ or the boundary of $Y_n$. 
There are either one or two components $K$ of $\int_\tau(Y_0) \ssm Y_{n+1}$ whose boundary
path $p_K$ contains $v_n$. Examples of these two cases  are shown in \Cref{last-Y}.

For each such component $K$, let $l_K$ be the leaf of $\lambda^+$ adjacent to $v_n$ and
exiting $K$ through $\sigma_K\subset \boundary_\tau Y_0$, as in property (2). Let $q_K$ be
a path along $p_K$ from 
$v_n$ to one of the endpoints of $p_K$. When there is only one component $K$, let
$l=l_K$ and $q=q_K$.  If there are two components $K,K'$, let $l = l_K \union l_{K'}$ and
$q = q_K \union q_{K'}$.

By \Cref{lem:overlap_tau}, $l$ is an embedded essential arc of $\int_\tau Y$, and $q$ and
$l$ define the same element of $\A(Y)$.

Recall that $\Pi_*(Q) = \Delta_j$. 
If  $j\le n+1$ we know that $Y_{n+1}$ is a subcomplex of $\Delta_j$ by (1), and hence we
obtain a bound
$$d_Y(\Delta_j, \lambda^+) \le D+1.$$

If $j>n+1$ we must consider what happens after further transitions. By (3), for each
component $K$ of $\int_\tau(Y_0) \ssm Y_{n+1}$ adjacent to $v_n$, there is an edge
$f_{n+1}$ of $\Delta_{n+1}$ adjacent to $v_n$, and passing through $K$ and exiting through
$\sigma_K$. In particular the defining rectangle $R_{n+1}$ of $f_{n+1}$ passes through the
rectangle of $\sigma_K$ from top to bottom. We claim that there is such an $f_i$ and $R_i$
for each $i\ge n+1$.

\realfig{f-cross}{For $i>n$ we always find an edge in $\Delta_i$ starting at $v_n$ and
  exiting $K$ through $\sigma_K$.}

If this holds for $i$, and the move $\Delta_i \to \Delta_{i+1}$ does not replace $f_i$
then we can let $R_{i+1}=R_i$ and $f_{i+1} = f_i$. If $f_i$ is replaced, there is a
quadrilateral $Q_i$ in $\Delta_i$ of which $f_i$ is a diagonal, and one of the two edges
of $\boundary Q_i$ adjacent to $v_n$ must also cross the $\sigma_K$ rectangle from top to
bottom. (See \Cref{f-cross}). 

Therefore, we can use these edges of $\Delta_j$ (either one or two depending on the number
of components $K$ adjacent to $v_n$) to give an essential path in $\Delta_j$ which gives
the same element of $\A(Y)$ as the leaf path $l$. We conclude again that 
$$d_Y(\Delta_j, \lambda^+) \le D+1$$
which completes the proof.
\qedhere
\end{proof}


\subsection{Sweeping (slowly) through pockets}
The following lemma states that in order to move a definite distance in the curve graph of $Y \subset X$ a certain number of edges, linear in the complexity of $Y$, need to be flipped. It will  needed for the proof of \Cref{th:bounding_projections_closed}.

\begin{lemma}[Complexity slows progress] \label{lem:slowed_progress}
Suppose that $T_1, T_2 \in T(\partial_\tau  Y)$ are connected by no more than $|\chi(Y)|$ diagonal exchanges through sections of $T(\partial_\tau Y)$. Then $d_{Y}(T_1,T_2) \le 2D$.
\end{lemma}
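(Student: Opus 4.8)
The plan is to track how the projection $\pi_Y^\tau$ changes along a sequence of diagonal exchanges and to show that a single exchange can move it by a bounded amount, so that $|\chi(Y)|$ exchanges move it by at most roughly $|\chi(Y)|$ — but since the projection lands in a space of diameter $D$, the total change is already bounded by $D$ in each of the two endpoints, giving $2D$ for the diameter of the union. More precisely: by \Cref{lem:connectivity} (connectivity, with the "moreover" clause) we may assume $T_1 \le T_2$ and take a sequence $T_1 = S_0, S_1, \dots, S_m = T_2$ of sections in $T(\partial_\tau Y)$ with $m \le |\chi(Y)|$, each $S_{k+1}$ obtained from $S_k$ by a single upward tetrahedron move. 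Writing $\Delta_k = \Pi_*(S_k)$, the key point is that since every $S_k$ contains $\partial_\tau Y$, the subcomplex $\int_\tau(Y)$ makes sense for each $\Delta_k$ and the diagonal exchange from $\Delta_k$ to $\Delta_{k+1}$ either misses $\int_\tau(Y)$ entirely — in which case $\pi_Y^\tau(S_k) = \pi_Y^\tau(S_{k+1})$ — or it is a diagonal exchange inside $\int_\tau(Y)$, which changes at most two edges of the induced (partial) ideal triangulation of $\int_\tau(Y)$.

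The heart of the argument is then a counting/diameter estimate. First I would observe that $\Pi_*(S_k^{(1)}) \cap \int_\tau(Y)$ is a proper graph of $\tau$-edges in $\int_\tau(Y)$ whose vertices lie in $\sing(q) \cap \int_\tau(Y)$; by Gauss--Bonnet this set has at most $2|\chi(Y)|$ points, so by \Cref{cor:diam_bound} each individual $\pi_Y^\tau(S_k)$ has diameter at most $D$ in $\A(Y)$. That already bounds $\diam_Y(\pi_Y^\tau(T_1))$ and $\diam_Y(\pi_Y^\tau(T_2))$ by $D$ each. To control $d_Y(T_1, T_2) = \diam_Y(\pi_Y^\tau(T_1) \cup \pi_Y^\tau(T_2))$ it then suffices to exhibit a single vertex of $\A(Y)$ that lies (or is within bounded distance of something) in both $\pi_Y^\tau(T_1)$ and $\pi_Y^\tau(T_2)$, whence $d_Y(T_1,T_2) \le D + D = 2D$. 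Here I would use $\partial_\tau Y$ itself: since $\partial_\tau Y \subset S_k$ for every $k$, any $\tau$-edge in $\int_\tau(Y)$ that is disjoint from some edge of $\partial_\tau Y$ — or more simply, the fact that consecutive $\Delta_k$ and $\Delta_{k+1}$ share all but one edge of their restriction to $\int_\tau(Y)$, and the shared edges persist — lets one propagate a common nearly-simple arc. Concretely: because only $m \le |\chi(Y)|$ edges of $\int_\tau(Y)$ ever get flipped along the whole sequence, the union $\bigcup_k \big(\Pi_*(S_k^{(1)}) \cap \int_\tau(Y)\big)$ is still a proper graph in $\int_\tau(Y)$ with a bounded number of vertices (still the $\le 2|\chi(Y)|$ singularities), and \Cref{cor:diam_bound} applied to this larger graph gives that $\A_Y$ of it has diameter $\le D$; since $\pi_Y^\tau(T_1)$ and $\pi_Y^\tau(T_2)$ are both contained in this set, we get $d_Y(T_1,T_2) \le D$, in fact better than claimed, and certainly $\le 2D$.

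The main obstacle I anticipate is the annular case and, relatedly, making sure the "bounded vertex count for the union" argument is legitimate: when $Y$ is an annulus, $\pi_Y^\tau$ is defined via $q$-geodesic extensions of the $\tau$-edges crossing $\int_\tau(Y)$, not via \Cref{cor:diam_bound} directly, so I would instead argue that each flip changes the crossing edge of the thin annulus $\int_q(Y) \subset \int_\tau(Y)$ in a controlled way and that any two such crossing $\tau$-edges appearing among $T_1, T_2$ and the intermediate sections intersect a bounded number of times (using the Gauss--Bonnet estimate from the proof of \Cref{lem:overlap_tau}, that two $q$-geodesic segments meet at most once outside the maximal flat annulus), yielding the bound $d_Y(T_1,T_2)\le 2D$ with room to spare. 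A secondary subtlety is confirming that "diagonal exchange inside $\int_\tau(Y)$" is the only alternative to "misses $\int_\tau(Y)$" — i.e., that an upward move along an edge of $\partial_\tau Y$ cannot occur while staying in $T(\partial_\tau Y)$, which is immediate since such a move would not fix $\partial_\tau Y$. Once these cases are pinned down the estimate is routine.
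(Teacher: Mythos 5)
There is a genuine gap, and it sits exactly where the hypothesis ``at most $|\chi(Y)|$ exchanges'' has to do real work. Your central step applies \Cref{cor:diam_bound} to the union $\bigcup_k \big(\Pi_*(S_k^{(1)}) \cap \int_\tau(Y)\big)$, asserting it is a proper graph with vertex set the $\le 2|\chi(Y)|$ singularities. But this union is not a proper graph: each diagonal exchange replaces one diagonal of a quadrilateral by the other, and the two diagonals cross, so the union is not embedded (and a later new edge can also cross edges removed at earlier steps, giving up to order $|\chi(Y)|^2$ crossings). \Cref{cor:diam_bound} and its intersection-counting proof require an embedded graph with at most $2|\chi(Y)|+1$ vertices; subdividing at the crossings destroys that bound, and the resulting intersection estimate no longer yields a uniform constant. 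A telling symptom: your vertex count never uses $m \le |\chi(Y)|$, so the same argument would ``prove'' $d_Y(T_1,T_2)\le D$ for an arbitrarily long flip sequence in $T(\partial_\tau Y)$ --- in particular for the full sweep from $T^-$ to $T^+$, whose ends project $(D+1)$--close to $\lambda^-$ and $\lambda^+$ by \Cref{prop:closed_distance} and hence can be arbitrarily far apart. So the shortcut cannot be repaired by bookkeeping; it is structurally wrong.

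Your fallback --- exhibit a common vertex of $\pi_Y^\tau(T_1)$ and $\pi_Y^\tau(T_2)$ because ``the shared edges persist'' --- is the right statement, but it is precisely the nontrivial content of the lemma and you give no argument for it. What must be shown is that the subgraph of the triangulation of $\int_\tau(Y)$ obtained by deleting the at most $|\chi(Y)|$ flipped edges still carries an essential arc or curve that is nearly simple in it; a priori every essential curve could be forced to traverse a deleted edge. The paper proves this by a counting argument: blow up the punctures, discard the contractible components of the deleted edge set $E$ not meeting $\partial Y$, and compute Euler characteristics of $Y\ssm E'$ versus $E'\cup\partial Y$ to show that if every complementary component were a disk or boundary-parallel annulus one would get $|\chi(Y)| \le |E|-1$, a contradiction; an essential simple closed curve in a non-disk, non-peripheral component is then pushed into the one-skeleton avoiding $E$. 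This is where the bound $|E|\le|\chi(Y)|$ enters, and once one has such a curve $c$, nearly simple in $T_1\cap T_2$ restricted to $\int_\tau(Y)$, the bound $d_Y(T_1,T_2)\le 2D$ follows from $\diam_Y(\pi_Y^\tau(T_i))\le D$ as you intended. Without an argument of this kind (or some substitute for it), your proposal does not establish the lemma. Your remarks on the annular case are not the issue: there $|\chi(Y)|=0$ forces $T_1=T_2$ on the nose, and \Cref{lem:overlap_tau} already gives the bound.
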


\begin{proof}
We begin with the following claim:

\begin{claim2}
Let $\overline Y$ be a compact surface containing a finite (possibly empty) set $\PP_Y \subset \int(\overline Y)$, let $Y = \overline Y \ssm \PP_Y$, and assume that $\chi(Y)\le -1$. Let $\overline \Delta$ be a triangulation of $\overline Y$ whose vertex set contains $\PP_Y$ and consider the properly embedded graph $\Delta = \overline \Delta \ssm \PP_Y \subset Y$.
If $E$ is a collection of nonboundary edges of $\Delta$ with $|E| \le |\chi(Y)|$,  there is $c \in \A(Y)$ that is nearly simple in $\Delta$ and traverses no edge of $E$.
\end{claim2}

Recall from \Cref{sec:prop_graph} that $c$ is nearly simple in $\Delta$ if it is properly homotopic to path or curve in $\Delta$ that visits no vertex more than twice. Note that the conclusion of the claim is equivalent to the statement that the proper graph $\Delta^{(1)} \ssm \int(E)$ is essential in $Y$.

Applying the claim to $\int_\tau (Y)$ gives $c \in \A(Y)$ which is nearly simple in the graph $\mathrm{cl}_X(\Pi_*((T_1 \cap T_2)^{(1)}))$.
Hence, $d_Y(T_1,T_2) \le 2D$.

Now it suffices to prove the claim:
\begin{proof}[Proof of claim]
First, we blow up the punctures $\PP_Y$ to boundary components. That is, each $v \in \PP_Y$ is replaced with a subdivided circle $S_v$ containing a vertex for each edge adjacent to $v$. Continue to call the resulting surface $Y$ and note that $\Delta$ induces a natural cell structure on $Y$, which we continue to denote by $\Delta$, made up of $n$-gons with $n\le 6$. Obviously, $\chi(Y)$ is unchanged.

Considering $E$ as a possibly disconnected subgraph of $\Delta$, let $E'$
be the edges of $E$ remaining after removing the components of $E$
which are contractible in $Y$ and do not meet $\partial Y$. Hence,
every component of $Y \ssm E'$ is $\pi_1$--injective. We first
claim that some component of $Y \ssm E'$ is neither a disk nor a
boundary parallel annulus. 
Letting $G = E' \union \boundary Y$, we have
$\chi(Y) = \chi(Y\ssm G) + \chi(G)$ since $Y\ssm G$ is
adjoined to $G$ along circles.  Now $\chi(G) = \chi(\boundary Y) +v
-e = v-e$, where $v$ is the number of vertices in $E'\ssm
\boundary Y$ and $e = |E'|$.
If $Y\ssm E'$ consists of only
$d\ge 0$ disks and $a\ge 0$ annuli then we have
$\chi(Y\ssm G) = \chi(Y\ssm E') = d$, and so
$\chi(Y) = d+v-e$. 
Since an annulus must have a vertex of $E'\ssm
\boundary Y$ in its boundary, we 
note that if $d=0$ then $v>0$, and hence $d+v \ge 1$. We
thus have $\chi(Y) \ge 1-|E'|$, and so 
 $|\chi(Y)| \le |E'| -1 \le |E| -1 $, a contradiction.

Now let $U'$ be some component of $Y \ssm E'$ which is neither a
disc nor a boundary parallel annulus. Hence, there is an essential simple
closed curve $\gamma'$ of $Y$ contained in $U'$. As $U = U'
\ssm E$ has a component corresponding to $U'$ minus a collection
of disks, $\gamma'$ is homotopic to a simple curve $\gamma$ in $U$.
Let $c$ be a cell of $\Delta$ which $\gamma$
crosses. Since $c$ is a blown-up triangle, the edges that $\gamma$
crosses, which are in the complement of $E$, are connected along
either vertices of $c$ or arcs of $\boundary Y$ (which are also not in
$E$).
Thus $\gamma$ can be deformed to a curve in $\Delta$ that does not
traverse the edges of $E$. This shows that the proper graph 
$\Delta^{(1)} \ssm \int(E)$ is essential in $Y$ and the claim follows.
\end{proof}
This completes the proof of \Cref{lem:slowed_progress}.
\end{proof}


\section{Uniform bounds in a fibered face} \label{sec:uniform_bounds}

In this section we prove the first main theorem of the paper. For a surface $Y$ recall that $|\chi'(Y)| = \max\{|\chi(Y)|, 1\}$.

\begin{theorem}[Bounding projections for $M$] \label{th:bounding_projections_closed}
Let $M$ be a hyperbolic fibered $3$-manifold with fibered face $\FF$. Then for any fiber $S$ contained in $\R_+ \FF$ and any subsurface $Y$ of $S$
\[
 |\chi'(Y)| \cdot (d_Y(\lambda^- ,\lambda^+) -16D) \le 2D \: \vert \FF \vert,
\]
where $|\FF|$ is the number of tetrahedra of the veering triangulation associated to $\FF$.
\end{theorem}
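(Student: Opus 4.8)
The plan is to assemble the machinery of \Cref{sec:veering,sec:bounds} into a ``pocket'' argument. Fix a fiber $S$ in $\R_+\FF$ and a subsurface $Y\subset S$, and let $\cM$ be the fully-punctured manifold of the face, with veering triangulation $\tau$ on $\cS\times\R = \N$; here $\cS$ is $S$ punctured at the singular orbits of the flow, and $|\FF|$ is the number of $\tau$-tetrahedra of $\cM$. We may assume $d:=d_Y(\lambda^-,\lambda^+) > 16D$ (otherwise there is nothing to prove, since the left side is then $\le 0$) and, by \Cref{prop: q_compatible} and \Cref{thm: tau-compatible}, that $Y$ is $\tau$-compatible, so $\partial_\tau Y$ and the representative $\int_\tau(Y)$ make sense. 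The idea is that the section complex $U(T^-,T^+)$ between the bottom and top sections of $T(\partial_\tau Y)$ is a ``pocket'' for $Y$: its \emph{width} is roughly $|\chi(Y)|$ (the number of singularities and tetrahedra sitting over $Y$ per level), and its \emph{depth} — the number of tetrahedron moves from $T^-$ to $T^+$ — is at least roughly $d$. Since each tetrahedron move uses a tetrahedron of $\cM$ (up to the $\Z$-action there are only $|\FF|$ of them), and each tetrahedron can only be used so many times inside the pocket before it would force a change in $\int_\tau(Y)$ not reflecting width-$|\chi(Y)|$ data, one gets $(\text{depth})\cdot(\text{width})\lesssim |\FF|$, which is the claimed inequality.

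Concretely, first I would use \Cref{prop:closed_distance} to say: for $Q^+ := T^+$ we have $d_Y(T^+,\lambda^+)\le D+1$, and for $Q^- := T^-$, $d_Y(T^-,\lambda^-)\le D+1$; combined with the triangle inequality this gives
\[
d_Y(T^-,T^+) \;\ge\; d_Y(\lambda^-,\lambda^+) - 2(D+1) - \diam_Y(\pi^\tau_Y(T^\pm)) \;\ge\; d - 4D
\]
using \Cref{lem:overlap_tau}.(1) for the diameter terms. Next, \Cref{lem:connectivity} gives a sequence of upward tetrahedron moves from $T^-$ to $T^+$ through $T(\partial_\tau Y)$; say there are $N$ of them. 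The contrapositive of \Cref{lem:slowed_progress}, applied repeatedly to consecutive blocks of $|\chi(Y)|$ moves, yields that any block of $|\chi(Y)|$ consecutive moves changes $d_Y$ by at most $2D$; hence $d - 4D \le d_Y(T^-,T^+) \le 2D\cdot \lceil N/|\chi(Y)|\rceil$, i.e.
\[
|\chi'(Y)|\cdot (d - 16D) \;\le\; |\chi'(Y)|\cdot(d - 4D) - 12D|\chi'(Y)| \;\le\; 2D\, N - (\text{slack}).
\]
So it remains to bound $N$, the number of tetrahedron moves inside the pocket, by $|\FF|$ (up to the multiplicative constant already present). This is where the pocket's finite ``width'' is used: each upward move from $\Delta_i$ to $\Delta_{i+1}$ either leaves $Y$-data untouched or is ``charged'' to one of the tetrahedra over $\int_\tau(Y)$; because the $\Z$-cover structure of $\N\to\cM$ has only $|\FF|$ tetrahedra per period and the section complex over the (compact) pocket for $Y$ meets only finitely many, boundedly-many-times, one extracts $N \le C\,|\FF|$ with $C$ absorbed into the constant $2D$. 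I would formalize this by building the simplicial pocket explicitly — a subcomplex of $\N$ projecting to a finite subcomplex of $\cM$ — and counting its tetrahedra two ways, once by the number of moves $N$ and once by $|\FF|\times$(height), bounding height by $|\chi(Y)|$ via Gauss--Bonnet as in the proof of \Cref{lem:slowed_progress}.

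The main obstacle I expect is the last step: rigorously setting up the ``pocket'' as a subcomplex of $\N$ over the compact core corresponding to $Y$, controlling that it embeds (equivalently, that the $\Z$-cover $\N\to\cM$ restricted to it is injective) or at least that each $\cM$-tetrahedron is hit a bounded number of times, and converting a count of tetrahedron moves into a count of $\tau$-tetrahedra of $\cM$. The delicacy is that $\int_\tau(Y)$ is \emph{not} $q$-convex (as remarked before \Cref{lem:overlap_tau}), so the pocket can be ragged; I would handle this by working with the exhaustion $Y_n$ of $\int_\tau(Y)$ used in the proof of \Cref{prop:proj_close} and the inductive $Y_i$-construction from the proof of \Cref{prop:closed_distance}, which already tracks exactly how the triangulation over $Y$ evolves move-by-move, and show that the union of the ``removed'' cells over all $N$ moves is a simplicial pocket whose tetrahedra inject into $\cM$'s veering triangulation, giving $N \le |\FF|$ up to the constant. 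The remaining bookkeeping — assembling the inequalities above with $D=15$ and checking the $16D$ and $2D$ constants — is routine and I would omit the arithmetic.
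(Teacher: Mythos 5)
Your outline reproduces the paper's framework up to the point where the real work begins, and then assumes away exactly the step that the paper identifies as the main difficulty. The lower bound on depth is fine: using \Cref{prop:closed_distance} to place $T^\pm$ near $\lambda^\pm$ in $\A(Y)$, \Cref{lem:connectivity} to sweep from $T^-$ to $T^+$ inside $T(\partial_\tau Y)$, and \Cref{lem:slowed_progress} to convert $d_Y$-distance into a count of tetrahedron moves is the same counting the paper does at the end. The gap is your claim that the tetrahedra of the maximal pocket $U_Y=U_Y(T^-,T^+)$ inject into $\cM$, or at least that each tetrahedron of $\cM$ is hit a bounded number of times, so that the number of moves is at most $C\,|\FF|$. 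In the general (not fully-punctured) case this is false, and the paper says so explicitly: $U_Y$ need not be disjoint from its translates $\Phi^k(U_Y)$, because when singularities of $q$ are not punctures the intersections can be large, homotopically inessential subcomplexes whose location one cannot control; the trick from the fully-punctured paper (any overlap of $U_Y$ with $\Phi^k(U_Y)$ forces $\pi_Y(\phi^k(\partial Y))\neq\emptyset$, hence is pinned near $\lambda^-$ by \Cref{lem:order}) breaks precisely because edges of $\tau$ over $\int_\tau(Y)$ need not represent anything essential. Your proposed fix --- rerunning the $Y_i$ bookkeeping from the proof of \Cref{prop:closed_distance} and asserting that the removed cells inject --- restates the desired conclusion rather than proving it; that bookkeeping tracks one sweep in the cover $\N$ and says nothing about identifications under the deck group.

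What the paper actually does, and what is missing from your proposal, is to truncate the pocket so that it embeds by construction: choose a $\phi$-section $T_0$ (Agol's sweep-out, \Cref{lem:AG_sec}) normalized so that $3D\le d_Y(T_0,\lambda^+)\le 5D$, let $N=N_Y$ be the first time $\phi^N(Y)$ overlaps $Y$, and take the isolated pocket $V_Y=\widehat U_Y(\Phi^N(T_0),T_0)$. Its interior embeds in $\cM$ (\Cref{prop:isolated_embed}, \Cref{lem:int_int}, \Cref{cor:embed}): translates by $\Phi^i$ with $i\ge N$ are killed by the $\phi$-section property, and those with $0<i<N$ by disjointness of $\int_\tau(Y)$ and $\int_\tau(\phi^i Y)$. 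The price is that the top and bottom of $V_Y$ are no longer $T^\pm$, so one must show the truncation loses only $O(D)$ of depth in $\A(Y)$; this requires the retraction $T\mapsto T^Y$ together with \Cref{prop:stays_close} (itself resting on the spine lemma), \Cref{lem:order}, and \Cref{prop:proj_close} to get $d_Y(\Phi^N(T_0),\lambda^-)\le 2D+11$ (\Cref{lem:right_place}), which is where the constant $16D$ comes from. Only then does the count $|\FF|\ge|V_Y|\ge \frac{1}{2D}|\chi'(Y)|\,(d_Y(\lambda^-,\lambda^+)-16D)$ via \Cref{lem:slowed_progress} go through. Without the isolation step your inequality $N\le C|\FF|$ has no justification, so the proposal as written does not prove the theorem.
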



The proof of \Cref{th:bounding_projections_closed} requires the construction of an embedded subcomplex of $(\cM,\tau)$ corresponding to $Y$ whose size is roughly $ |\chi'(Y)| \cdot d_Y(\lambda^- ,\lambda^+)$.  

\subsection{Pockets and the approach from the fully-punctured case}
Let us assume from now on that $d_Y(\lambda^-,\lambda^+)$ is sufficiently large that, by
\Cref{thm: tau-compatible}, $Y$ is $\tau$-compatible. We first describe an approach directly extending the
argument from \cite{veering1}, and explain where it runs into trouble.  

Recall from \Cref{sec:sections} the definition of $T(\partial_\tau Y)$, and its top and bottom sections $T^+$ and $T^-$.  For any two sections $T_1, T_2\in T(\partial_\tau Y)$ we have the region $U(T_1,T_2)$ in $\N$ between them. The closure of the open subsurface $\int_\tau(Y)$ in $X$ is the subspace $\mathcal{R}_Y$ which is the image of $Y_\tau \subset X_Y$ under the covering map $X_Y \to X$. 
Using this
we further define
\begin{equation}\label{pocket Y}
U_Y(T_1,T_2) = U(T_1,T_2) \intersect \Pi^{-1}(\mathcal{R}_Y)
\end{equation}
to be the region between $T_1$ and $T_2$ that lies above $Y$. Note that $U_Y(T_1,T_2)$ is a subcomplex of $\N$.
We sometimes call this the \define{pinched pocket for $Y$} (between
$T_1$ and $T_2$). We let 
\begin{equation}\label{maximal pocket}
  U_Y = U_Y(T^-,T^+)
\end{equation}
denote the \define{maximal pocket for $Y$}.

By \Cref{prop:closed_distance}, $d_Y(T^-,T^+)$ is close to $d_Y(\lambda^-,\lambda^+)$. Thus
when these are sufficiently large, by \Cref{lem:slowed_progress} we obtain a lower bound
on the number of transitions between 
$T^-$ and $T^+$ in $\N$ that project to $\mathcal{R}_Y$, and in particular a lower bound on the number of
tetrahedra in $U_Y$. If $U_Y$ were to embed in $\cM$  (equivalently if $U_Y$ were disjoint
from all its translates by $\Phi$), this would give us what we need.
It is not in general true, so instead we must restrict to a suitable sub-region of $U_Y$. 

In the fully-punctured case, we can use the fact that, since every edge of
$\tau$ represents an element of $\A(S)$, any intersection between $U_Y$ and $\Phi^k(U_Y)$
would project to a non-empty $\pi_Y(\phi^k(\partial Y))$. \Cref{phi n of boundary Y}
implies that,  for $k>0$, whenever $\pi_Y(\phi^k(\partial Y))$ is non-empty it is close to $\pi_Y(\lambda^-)$. Now 
by restricting $U_Y$ to a subcomplex $V_Y$ whose top and bottom surfaces are sufficiently
far in $\A(Y)$ from $\lambda^+$ and $\lambda^-$, and applying this argument to $V_Y$, we
see that $V_Y$ cannot meet $\Phi^k(V_Y)$ at all.

In the general situation, since some singularities of $q$ are not punctures, not every collection
of $\tau$-edges is essential and we are faced with the possibility that $U_Y$ and
$\Phi^k(U_Y)$ can intersect in large but homotopically inessential subcomplexes whose
location is hard to control. This is the main difficulty.

\subsection{Isolation via $\phi$-sections}
We begin, therefore, with the following construction. Let $T_0$ denote a $\phi$-section (\Cref{sec:sections}),
so that $\phi^k(T_0) \le T_0$ for $k>0$. 
Define
\begin{equation}\label{NY def}
N = N_Y = \min \{ i > 0 : \phi^i(Y) \text{ overlaps } Y\}.
\end{equation}
Now consider the (possibly empty) region
\begin{equation}\label{R def}
R(T_0) = \int(U(\Phi^N(T_0), T_0)) \intersect \int(U_Y).
\end{equation}
It easily satisfies the embedding property:

\begin{proposition} \label{prop:isolated_embed}
The restriction of the covering map $\N \to \ring M$ to 
$R(T_0)$ is an embedding.
\end{proposition}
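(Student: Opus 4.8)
The plan is to show directly that $R(T_0)$ is disjoint from each of its translates $\Phi^k(R(T_0))$ for $k \neq 0$, which immediately gives the embedding since the covering $\N \to \ring M$ is the quotient by the $\Z$-action generated by $\Phi$. Since $R(T_0) \subset U_Y = U_Y(T^-, T^+) \subset \Pi^{-1}(\mathcal{R}_Y)$, any point of $R(T_0) \cap \Phi^k(R(T_0))$ projects under $\Pi$ into $\mathcal{R}_Y \cap \Phi^k(\mathcal{R}_Y)$. Recall $\mathcal{R}_Y$ is the image of $Y_\tau$ and is a closed neighborhood of $\int_\tau(Y)$; by $\tau$-compatibility, $\int_\tau(Y)$ is (isotopic to) an embedded copy of $Y$, so $\Phi^k$ acting on the base by $\phi^k$ sends this to a copy of $\phi^k(Y)$. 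If $\phi^k(Y)$ does not overlap $Y$ then (for the interiors) these are disjoint up to isotopy, and one can arrange $R(T_0)$ to be disjoint from its translate; so the only cases to worry about are those $k$ with $\phi^k(Y)$ overlapping $Y$, i.e.\ $|k| \ge N = N_Y$.

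Now I would use the $\phi$-section condition to handle exactly these $k$. Since $T_0$ is a $\phi$-section, $\Phi^N(T_0) \le T_0$, and more generally $\Phi^{jN}(T_0) \le \Phi^{(j-1)N}(T_0) \le \cdots \le T_0$ for $j \ge 1$, with the reverse for $j \le 0$. The region $R(T_0)$ sits inside the \emph{open} region $\int(U(\Phi^N(T_0), T_0))$ — that is, strictly between the sections $\Phi^N(T_0)$ and $T_0$ in the fiberwise order. Applying $\Phi^k$, the translate $\Phi^k(R(T_0))$ sits strictly between $\Phi^{k+N}(T_0)$ and $\Phi^k(T_0)$. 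For $k \ge N$ we have $\Phi^k(T_0) \le \Phi^N(T_0)$ (again using the $\phi$-section inequality iterated), so the closed region below $\Phi^N(T_0)$ containing $\Phi^k(R(T_0))$ meets the open region above $\Phi^N(T_0)$ containing $R(T_0)$ only along the section $\Phi^N(T_0)$ itself — but $R(T_0)$ being in the \emph{open} region excludes that section. Hence $R(T_0) \cap \Phi^k(R(T_0)) = \emptyset$ for $k \ge N$. For $k \le -N$, apply $\Phi^{-k}$ to reduce to the previous case, or argue symmetrically using $\Phi^k(R(T_0))$ lying above $\Phi^{k+N}(T_0) \ge T_0$ and $R(T_0)$ lying below $T_0$. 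For $0 < |k| < N$, the base projections $\phi^k(Y)$ and $Y$ are disjoint up to isotopy by the definition of $N$, so $\Pi(R(T_0))$ and $\Pi(\Phi^k(R(T_0)))$ can be taken disjoint in $\ring M$, settling these $k$.

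The one subtlety — which I expect to be the main point requiring care rather than a true obstacle — is reconciling the two disjointness mechanisms at the boundary: for $|k| < N$ disjointness comes from the \emph{base} (the subsurfaces $\phi^k(Y)$ and $Y$ being unlinked), while for $|k| \ge N$ it comes from the \emph{fiber direction} (the strict ordering of $\phi$-sections). One must make sure that $\mathcal{R}_Y$ (a thickening of $\int_\tau(Y)$, not just $\int_\tau(Y)$ itself) is genuinely carried into a disjoint set by $\phi^k$ when $\phi^k(Y)$ does not overlap $Y$; this is where one uses that $\int_\tau(Y)$ is an essential embedded subsurface isotopic to $Y$ and that "overlap" is an isotopy-invariant notion, together with \Cref{lem:overlap_tau}(2) guaranteeing that disjoint subsurfaces have disjoint $\tau$-representatives. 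With that in hand, both ranges of $k$ are covered and no point of $R(T_0)$ is identified with another under the deck action, so $R(T_0) \to \ring M$ is injective, hence (being a restriction of a covering map to an open subcomplex) an embedding.
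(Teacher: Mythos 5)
Your proposal is correct and follows essentially the same two-case argument as the paper: the $\phi$-section ordering $\Phi^k(T_0)\le\Phi^N(T_0)$ handles $|k|\ge N$ because $R(T_0)$ lies in the \emph{open} region above $\Phi^N(T_0)$, and for $0<|k|<N$ the non-overlapping of $Y$ and $\phi^k(Y)$ gives, via \Cref{lem:overlap_tau}(2), literal disjointness of $\int_\tau(Y)$ and $\int_\tau(\phi^k(Y))$, hence of $\int(U_Y)$ and $\Phi^k(\int(U_Y))$. The only cosmetic difference is your hedging about arranging disjointness ``up to isotopy'' in the base: no arrangement is needed, since the lemma gives actual disjointness of the $\tau$-representatives and $R(T_0)\subset\int(U_Y)$ projects into the open surface $\int_\tau(Y)$, exactly as you resolve in your final paragraph.
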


\begin{proof}
We must show that  $\Phi^i (R(T_0))$ is disjoint from
$R(T_0)$ for all $i > 0$ (the case for $i<0$ immediately
follows). For $i \ge N$  this is taken care of 
by the first term of the intersection since $T_0$ was chosen to be a $\phi$--section.

For $0<i<N$, we have that $\int_\tau(Y)$ and $\int_\tau(\phi^i(Y))$
are disjoint since the surfaces have no essential intersection, using
\Cref{lem:overlap_tau}. Therefore 
the interiors of $U_Y$ and
$\Phi^i(U_Y)$ are disjoint as well.
\end{proof}

What remains now is to choose $T_0$ so that a lower bound on $d_Y(\lambda^-,\lambda^+)$
implies a lower bound on the number of tetrahedra in $R(T_0)$. That is, we want to view
$R(T_0)$ as the interior of a ``pocket'' between two sections, whose projections to
$\A(Y)$ are close to $\lambda^\pm$.  For this we will need to
describe $R(T_0)$ from a different point of view. 

\subsection{$Y$-projections of sections}
For \emph{any} section $T$ of $\N$, there is a corresponding section
$T^Y \in T(\partial_\tau Y)$ obtained by pushing $T$ below $T^+$ and
above $T^-$. More formally,
$$
T^Y = T^+ \vmin (T^- \vmax T) = T^- \vmax (T^+ \vmin T).
$$
To see what this does it is helpful to consider it along the fibers of $\Pi$ which we
recall are oriented lines. For any $[a,b]\subset \R$, the map $x \mapsto b \vmin ( a\vmax
x)$ is simply retraction of $\R$ to $[a,b]$, and is equal to $x \mapsto a\vmax (b\vmin
x)$.

Now we can use this projection to extend the notation
$U_Y(T_1,T_2)$ (defined in (\ref{pocket
  Y})) to sections which are not necessarily in $T(\partial_\tau Y)$ by setting
\begin{equation}\label{pocket Y general}
\widehat U_Y(T_1,T_2) = U_Y(T_1^Y,T_2^Y) \subset U_Y.
\end{equation}

This construction is related to the region $R(T_0)$ defined in (\ref{R def}) by the
following lemma:

\begin{lemma} \label{lem:int_int}
  \begin{equation}\label{eq int int}
    \int(\widehat U_Y(T_1,T_2)) = \int (U(T_1, T_2)) \cap \int(U_Y).
  \end{equation}
\end{lemma}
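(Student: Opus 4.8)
The statement is a purely order-theoretic identity about the fiberwise lattice structure on $\N$, so the plan is to verify it fiber-by-fiber. Fix an oriented fiber $l$ of $\Pi$, and let me write $a = l\cap T^-$, $b = l\cap T^+$ (so $a \le b$ since $T^- \le T^+$), and $t_i = l\cap T_i$ for $i=1,2$. Along $l$ the section $T_i^Y$ meets $l$ at the retraction $r_{[a,b]}(t_i) = b\vmin(a\vmax t_i) = a\vmax(b\vmin t_i)$, which is exactly the comment made just before the lemma. The key point is that $l$ meets $U_Y$ either in a single compact interval (when $l$ lies over a point of $\mathcal R_Y$, i.e.\ when $\Pi(l)\in\mathcal R_Y$) or not at all; call this interval $J_l = [a,b]$ when it is nonempty. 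So everything reduces to the following elementary fact about intervals of $\R$: for any $a\le b$ and any $t_1,t_2\in\R$,
\[
\big(r_{[a,b]}(t_1),\, r_{[a,b]}(t_2)\big)^{\!\circ}\ \cap\ (a,b) \;=\; (t_1,t_2)^{\!\circ}\ \cap\ (a,b),
\]
where $(x,y)^\circ$ denotes the open interval with endpoints $x$ and $y$ regardless of their order, and the open interval $(a,b)$ on the right is the interior of $J_l$. This is checked by a short case analysis on the positions of $t_1,t_2$ relative to $a$ and $b$: if both lie in $[a,b]$ the retraction does nothing; if one or both lie outside, the retraction moves it to the nearest endpoint, and intersecting with the \emph{open} interval $(a,b)$ discards exactly the contribution of that endpoint, which is the same thing that happens on the right-hand side. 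The only subtlety is taking interiors correctly — this is why both $\int(U_Y)$ and $\int(U(T_1,T_2))$ (rather than the closed versions) appear in the statement, and why $\int(\widehat U_Y(T_1,T_2))$ rather than $\widehat U_Y(T_1^Y,T_2^Y)$ appears on the left.

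Concretely, I would argue as follows. First recall from \Cref{sec:sections} that $U(T_1,T_2) = U(T_1\vmin T_2, T_1\vmax T_2)$ is the union of simplices $\sigma$ with $T_1\vmin T_2 \le \sigma \le T_1\vmax T_2$, so along each fiber $l$ it is the closed interval with endpoints $t_1,t_2$; similarly $U_Y = U_Y(T^-,T^+)$ meets $l$ in $J_l=[a,b]$ when $\Pi(l)\in\mathcal R_Y$ and is empty otherwise, and $\widehat U_Y(T_1,T_2) = U_Y(T_1^Y,T_2^Y)$ meets $l$ in the closed interval with endpoints $r_{[a,b]}(t_1), r_{[a,b]}(t_2)$ (again only over $\mathcal R_Y$). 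Taking interiors: a point $x\in\N$ lies in $\int(K)$ for a subcomplex $K$ iff every simplex of $\N$ containing $x$ lies in $K$; using the consistency of the fiberwise order along simplices (the displayed fact in \Cref{sec:sections}: if $x\le T$ and $\sigma$ is the smallest simplex containing $x$ then $\sigma\le T$), one sees that $x\in\int(U(T_1,T_2))$ iff $x$ lies strictly between $t_1$ and $t_2$ on its fiber (and symmetrically $x\in\int(U_Y)$ iff $\Pi(x)\in\int(\mathcal R_Y)$ and $x$ lies strictly between $a$ and $b$ — equivalently, using that $T^\pm$ realize the top/bottom of $T(\partial_\tau Y)$). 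Granting these fiberwise descriptions of the interiors, the identity $\int(\widehat U_Y(T_1,T_2)) = \int(U(T_1,T_2))\cap\int(U_Y)$ is exactly the elementary interval fact above, applied on each fiber lying over $\int(\mathcal R_Y)$, and both sides are empty on every other fiber.

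The main obstacle — really the only place where care is needed — is matching "interior of a subcomplex of $\N$" with "fiberwise interior of an interval," i.e.\ making precise that $\int(U(T_1,T_2))$ meets each fiber in the \emph{open} interval between $t_1$ and $t_2$ and that $\int(U_Y)$ meets each fiber in the open interval $\int(J_l)$ (and over only the interior of $\mathcal R_Y$). Once that translation is in hand, the lemma is the one-line interval computation. I would also note the degenerate cases explicitly: if $t_1=t_2$ both sides are empty; if $\{t_1,t_2\}$ meets $(a,b)$ on one end only, the retraction pins the other end to $a$ or $b$ and the open-interval intersection is unaffected; and if $[a,b]$ is degenerate ($a=b$, which does not happen here since $T^-<T^+$ wherever $\mathcal R_Y$ is essential, but is harmless anyway) both sides are empty. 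This gives \eqref{eq int int}.
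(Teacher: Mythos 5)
Your proposal is correct and follows essentially the same route as the paper: both reduce the identity to the fiberwise fact that for intervals in $\R$ the retraction onto $J$ satisfies $\int(\pi_J(I)) = \int(I)\cap\int(J)$, with the remaining work being the bookkeeping that matches interiors of the subcomplexes with the open fiber intervals. The paper packages that bookkeeping slightly differently (observing that both sides lie over $\int_\tau(Y)$ because fibers of $U_Y$ over $\partial_\tau Y$ degenerate to points, then commuting interiors with fiber intersections for sets of the form $U(T,T')\cap\Pi^{-1}(\Omega)$ with $\Omega$ open), but your explicit pointwise fiberwise characterizations accomplish the same thing.
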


\begin{proof}
First consider what happens in each $\Pi$-fiber, which is just a statement about
projections in $\R$: If $J = [s,t]$ is an interval in $\R$ we have, as above, the
retraction  $\pi_J(u) = s\vmax (t\vmin u)$, and for any other interval $I$ we immediately
find
\begin{equation}\label{projection intersection in R}
  \int(\pi_J(I)) = \int(I)\intersect \int(J).
\end{equation}
To apply this to our situation note first that both the left and right hand sides of the
equality (\ref{eq int int}) are contained in $\Pi^{-1}(\int_\tau (Y))$. This is because $U_Y$ is in
$\Pi^{-1}(\mathcal{R}_Y)$ and in 
$U(T^-,T^+)$,
which means that for each $x\in \mathcal{R}_Y \ssm \int_\tau(Y) =  \partial_\tau Y$, $U_Y\intersect
\Pi^{-1}(x)$ is a single point, and hence not in the interior.

Now since $\Pi$ is a fibration, for any two sections $T,T'$ and region $Z$ of the form
$U(T,T') \intersect  \Pi^{-1}(\Omega)$ where $\Omega$ is open, we have
$$
\int(Z) \intersect \Pi^{-1}(x) = \int(Z \intersect \Pi^{-1}(x)).
$$ 
For  $x\in \int_\tau (Y)$,
applying this to the left hand side of (\ref{eq int int}) we see that
$$
\int(\widehat U_Y(T_1,T_2)) \intersect \Pi^{-1}(x) =
\int(\widehat U_Y(T_1,T_2) \intersect \Pi^{-1}(x)).
$$
On the right hand side, we obtain
$$
\int (U(T_1, T_2))  \intersect \Pi^{-1}(x) =
\int(U(T_1,T_2) \intersect \Pi^{-1}(x)) 
$$
and
$$
\int (U_Y)  \intersect \Pi^{-1}(x) =
\int(U_Y \intersect \Pi^{-1}(x)).
$$
This reduces the equality to a fiberwise equality, where it follows from
(\ref{projection intersection in R}).
\end{proof} 

\subsection{Relation between $T^Y$ and $\pi_Y$}
A crucial point now is to show that the operation $T \to T^Y$ does not alter the projection to $\A(Y)$ by too much. 

\begin{proposition} \label{prop:stays_close}
For all sections $T$ of $\N$, $d_Y(T, T^Y) \le 4 D$.
\end{proposition}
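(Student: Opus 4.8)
The plan is to exploit the simplicial structure of $\N$. Since $T^Y = T^+\vmin(T^-\vmax T)$ is obtained from $T,T^+,T^-$ by fiberwise minima and maxima, the first step is the elementary observation that such operations do not create new simplices: for any two sections $A,B$, every simplex of $A\vmin B$ (and of $A\vmax B$) is already a simplex of $A$ or of $B$. Indeed $A\vmin B\subset A\cup B$ as subsets of $\N$, so the relative interior of a simplex $\sigma$ of $A\vmin B$ contains a point $x$ lying on, say, $A$; since $A$ and $A\vmin B$ are both subcomplexes of the triangulated space $(\N,\tau)$ and $x$ lies in the relative interior of a unique simplex of $(\N,\tau)$, that simplex must be $\sigma$, so $\sigma$ is a simplex of $A$. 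Applying this twice, first to $T^-\vmax T$ and then to $T^+\vmin(T^-\vmax T)$, yields $(T^Y)^{(1)}\subseteq T^{(1)}\cup (T^+)^{(1)}\cup (T^-)^{(1)}$; hence every $\tau$-edge of the ideal triangulation $\Pi_*(T^Y)$ is an edge of one of the triangulations $\Pi_*(T)$, $\Pi_*(T^+)$, $\Pi_*(T^-)$.

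Suppose first that $Y$ is not an annulus. By definition $\pi_Y^\tau(T)$ and $\pi_Y^\tau(T^Y)$ are the $\A_Y$-images of the proper graphs $\int_\tau(Y)\cap\cl_X(\Pi_*(T^{(1)}))$ and $\int_\tau(Y)\cap\cl_X(\Pi_*((T^Y)^{(1)}))$, whose vertex sets both lie in $\sing(q)\cap\int_\tau(Y)$; by Gauss--Bonnet this set has at most $2|\chi(Y)|$ points, so \Cref{cor:diam_bound} gives $\diam_Y\pi_Y^\tau(T)\le D$ and $\diam_Y\pi_Y^\tau(T^Y)\le D$. It then remains to bound $d_Y(a,b)$ for arbitrary $a\in\pi_Y^\tau(T)$, $b\in\pi_Y^\tau(T^Y)$. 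Each of $a,b$ is represented by a nearly simple path in the corresponding graph, i.e.\ one that visits each of the $\le 2|\chi(Y)|$ singularities at most twice; such a path is a concatenation of $O(|\chi(Y)|)$ subsegments of $\tau$-edges, and by the first step all of these $\tau$-edges belong to $\Pi_*(T)\cup\Pi_*(T^+)\cup\Pi_*(T^-)$. Since two distinct $\tau$-edges of $\cX$ meet in at most one point (a standard property of $\tau$-edges, visible from the spanning-rectangle description underlying the partial order of \cite[Section~2.1]{veering1}), each subsegment of $a$'s path meets each subsegment of $b$'s path at most once, so $i(a,b)=O(|\chi(Y)|^2)$. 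Feeding this quadratic estimate into the effective distance--intersection number bounds of \Cref{distance and intersection} (\Cref{lem:effective_B} and its consequences) gives $d_Y(a,b)\le 4D$ for every value of $|\chi(Y)|$, with room to spare. Together with the two diameter bounds this yields $d_Y(T,T^Y)=\diam_Y(\pi_Y^\tau(T)\cup\pi_Y^\tau(T^Y))\le 4D$.

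When $Y$ is an annulus I would run the same outline using the Gauss--Bonnet argument from the proof of \Cref{lem:overlap_tau}(1): every $\tau$-edge meeting $\int_\tau(Y)$ essentially crosses the maximal flat cylinder $\int_q(Y)\subset\int_\tau(Y)$ in a single subsegment, two such subsegments meet at most once inside the cylinder, and they meet at most once in each of the two components of $X_Y\ssm\int_q(Y)$; hence any two of the $q$-geodesic extensions coming from $\tau$-edges of $\Pi_*(T)\cup\Pi_*(T^+)\cup\Pi_*(T^-)$ meet at most three times, so the whole relevant collection in $\A(Y)=\A(X_Y)$ has diameter at most $4\le 4D$, which bounds $d_Y(T,T^Y)$ as before.

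The main obstacle is the bookkeeping in the nonannular case: one must verify that a nearly simple representative of a class in $\pi_Y^\tau(T)$ or $\pi_Y^\tau(T^Y)$ is genuinely supported on only $O(|\chi(Y)|)$ subsegments of $\tau$-edges (using that it visits each of the $\le 2|\chi(Y)|$ singularities at most twice and, at its two ends, exits through a single $\tau$-edge), and that distinct $\tau$-edges cross at most once, so that the quadratic intersection bound — and hence, via the Bowditch-type inequalities of \Cref{distance and intersection}, the uniform bound $4D$ — genuinely holds. By contrast, the structural input about minima and maxima of sections is immediate.
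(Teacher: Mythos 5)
Your opening reduction is fine: since fiberwise $\vmax$ and $\vmin$ of sections only reassemble existing simplices, every $\tau$-edge of $\Pi_*(T^Y)$ is indeed an edge of $\Pi_*(T)$, $\Pi_*(T^+)$ or $\Pi_*(T^-)$. The argument breaks at the next step, where you assert that distinct $\tau$-edges (equivalently, their subsegments inside $\int_\tau(Y)$, or inside the flat cylinder in the annular case) meet at most once, and hence that $i(a,b)=O(|\chi(Y)|^2)$ for $a\in\pi_Y^\tau(T)$, $b\in\pi_Y^\tau(T^Y)$. This is not a property of $\tau$-edges, and it fails exactly in the situation the proposition is about. \Cref{lem:overlap_tau} and \Cref{cor:diam_bound} control a \emph{single} proper graph of $\tau$-edges with pairwise disjoint interiors (the edges of one section); here you are intersecting edges coming from different sections, and edges of $T$ and of $T^+$ can cross each other arbitrarily many times. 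Concretely, if $A$ is an annulus in $Y$ (or $Y$ itself) with large relative twisting $d_A(\lambda^-,\lambda^+)$, then an edge of $T$ and an edge of $T^+$ crossing $A$ differ by a large power of the twist about its core and intersect roughly that many times inside $\int_\tau(Y)$; so no bound in terms of $|\chi(Y)|$ alone is available, and the intersection-number route cannot give the statement. The annular case makes the failure visible at the level of conclusions: your argument would bound the diameter of the projection of \emph{all} edges of $T\cup T^+\cup T^-$ to $\A(Y)$ by $4$, in particular $d_Y(T^-,T^+)\le 4$, whereas \Cref{prop:closed_distance} forces $d_Y(T^-,T^+)$ to be within an additive constant of $d_Y(\lambda^-,\lambda^+)$, which is unbounded. (In the proof of \Cref{lem:overlap_tau} the two $\tau$-edges $a,b$ are disjoint because they lie in one graph, so all their crossings happen outside the flat cylinder and are controlled by Gauss--Bonnet; dropping that disjointness is what lets unboundedly many crossings appear inside the cylinder.)

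The truth of the proposition rests on a different mechanism than small intersection number: $\pi_Y^\tau(T)$ and $\pi_Y^\tau(T^Y)$ are close in $\A(Y)$ even though their representatives may intersect enormously. The paper's proof decomposes $T=K_+\cup K_0\cup K_-$ according to the parts of $T$ above $T^+$, between $T^-$ and $T^+$, and below $T^-$. If both $\pi_Y^\tau(K_+)$ and $\pi_Y^\tau(K_-)$ are empty, then (\Cref{K0 spine}) $\Pi_*(K_0)$ contains a punctured spine for $Y$, which is a common essential subcomplex of $T$ and $T^Y$, giving $d_Y(T,T^Y)\le 2D$. Otherwise, say $\pi_Y^\tau(K_+)\ne\emptyset$, one chains $T\sim T\vmax T^+\sim\lambda^+\sim T^+\sim T^Y$, where the middle comparisons come from \Cref{prop:closed_distance} and the outer ones from the fact that $K_+$, respectively the part of $T^+$ below $T$, is shared by the two complexes being compared; each link costs $D$, giving $4D$. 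If you want to repair your write-up, you need some version of this case analysis (or another argument exploiting \Cref{prop:closed_distance} and common essential subcomplexes); the pairwise-crossing and Bowditch-type counting cannot be made to work.
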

Here $d_Y(T,T^Y)$ is meant in the sense of (\ref{dY complexes}).

\begin{proof}
  Write $T$ as a union of three subcomplexes, $T = K_+ \union K_0
  \union K_-$, where
\begin{align*}
  K_0 &= T\intersect U(T^-,T^+), \\
  K_+ &= T \intersect (T \vmax T^+),   \\
  K_- &= T \intersect (T\vmin T^-).
\end{align*}
See \Cref{K-decomp}. Since $T^+,T^- \in T(\partial_\tau Y)$, $\Pi_*(K_0)$ is a subcomplex of $\tau$-edges in $X$ that do not cross $\partial_\tau Y$.



\realfig{K-decomp}{The section $T$, its intersection with $U(T^-,T^+)$, and the decomposition $T = K_+ \union K_0 \union K_-$.}

\begin{lemma}\label{K0 spine}
  If $\pi_Y^\tau(K_+)$
  and $\pi_Y^\tau(K_-)$ are empty then
  $\Pi_*(K_0)$ contains a punctured spine for $Y$.
\end{lemma}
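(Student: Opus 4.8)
The plan is to argue by contradiction and to analyze the complementary regions of $\Pi_*(K_0)$ inside $\int_\tau(Y)$. First I would record the features of the decomposition $T=K_+\cup K_0\cup K_-$ that the argument needs: these are subcomplexes of $T$, with $K_0$ the simplices of $T$ lying (fiberwise) between $T^-$ and $T^+$ and $K_\pm$ those weakly above $T^+$ (resp.\ weakly below $T^-$); if a triangle of $\Delta:=\Pi_*(T)$ lies in $K_0$ (resp.\ $K_+$, $K_-$) then so do its three edges; a $K_0$-triangle cannot cross $\partial_\tau Y$, since by \Cref{lem:up_is_up} none of its edges can; and — the key inequality — because $T^+$ and $T^-$ agree along $\partial_\tau Y$, an edge which is both $\ge T^+$ and $\le T^-$ is squeezed between them and so coincides with $T^\pm$ along its fibers, giving at the level of edges the inclusion $K_+\cap K_-\subseteq K_0$. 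In particular, since $K_0$-edges never cross $\partial_\tau Y$, $\Pi_*(K_0)$ restricts to a proper graph $\Pi_*(K_0)\cap\int_\tau(Y)$ in $\int_\tau(Y)$.

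Next I would suppose that $\Pi_*(K_0)$ does \emph{not} contain a punctured spine for $Y$; equivalently, some complementary region $\Omega$ of $\Pi_*(K_0)\cap\int_\tau(Y)$ in $\int_\tau(Y)$ is neither an open disk, nor a once-punctured disk, nor a half-open collar of a component of $\partial_\tau Y$, and hence $\Omega$ contains a simple closed curve or proper arc $\alpha$ essential in $Y$. Since $\Omega$ is disjoint from $\Pi_*(K_0)$, it contains no $K_0$-edge, so any $K_0$-triangle meeting $\Omega$ is bounded on its $\int_\tau(Y)$-side entirely by $K_0$-edges, which would force $\Omega$ to be a disk — excluded. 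Thus $\Omega$ is tiled by the truncations to $\int_\tau(Y)$ of $K_+$- and $K_-$-triangles of $\Delta$, glued along $K_+$- and $K_-$-edges (or along arcs of $\partial_\tau Y$ where a triangle pokes out of $\int_\tau(Y)$). A $K_+$-piece and a $K_-$-piece cannot be adjacent inside $\Omega$: a shared edge would lie in $K_+\cap K_-\subseteq K_0$, hence in $\Pi_*(K_0)\cap\int_\tau(Y)$ and so on $\partial\Omega$ rather than inside $\Omega$; and they cannot meet across $\partial_\tau Y$, since crossing it leaves $\int_\tau(Y)$. Hence $\Omega$ is tiled entirely by $K_+$-pieces, or entirely by $K_-$-pieces; say the former.

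To conclude, I would note that the interior walls of this tiling of $\Omega$ are arcs of edges of $\Pi_*(K_+)$ lying in $\int_\tau(Y)$, hence lie in the proper graph $\Pi_*(K_+)\cap\int_\tau(Y)$ defining $\pi_Y^\tau(K_+)$. Therefore $\alpha$ is homotopic, within $\Omega\subseteq Y$, into that graph, so the graph carries an essential curve or arc of $Y$ and is therefore essential, i.e.\ $\pi_Y^\tau(K_+)\neq\emptyset$ — contradicting the hypothesis (and symmetrically $\pi_Y^\tau(K_-)\neq\emptyset$ in the other case). Hence every complementary region of $\Pi_*(K_0)\cap\int_\tau(Y)$ is a disk, a once-punctured disk, or a collar of $\partial_\tau Y$, which is exactly what it means for $\Pi_*(K_0)$ to contain a punctured spine for $Y$.

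The step I expect to be the main obstacle is the $\partial_\tau Y$-bookkeeping in the middle paragraphs: unlike $K_0$-triangles, the $K_+$- and $K_-$-triangles of $\Delta$ may straddle $\partial_\tau Y$, so $\Omega$ is tiled by truncated pieces rather than honest triangles, and one must verify carefully that the edges separating the $K_+$-part of $\Omega$ from the rest stay in $\Pi_*(K_0)$ while the interior walls stay in $\Pi_*(K_+)\cap\int_\tau(Y)$. This is precisely where the failure of $q$-convexity of $\int_\tau(Y)$ (remarked before \Cref{lem:overlap_tau}) makes the situation more delicate than in the fully-punctured case. One way to streamline the bookkeeping is to first replace $T$ by the section $T^Y=T^+\vmin(T^-\vmax T)\in T(\partial_\tau Y)$, using that $\Pi_*(K_0)$ equals the common subcomplex $\Pi_*(T)\cap\Pi_*(T^Y)$ and that $\Pi_*(T^Y)$ restricts to an honest triangulation of $\mathcal{R}_Y$, so that the complementary regions of $\Pi_*(K_0)$ differ from those of that triangulation only over the discrepancy locus $\{T>T^+\}\cup\{T<T^-\}$.
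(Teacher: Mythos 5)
Your proposal is correct and takes essentially the same route as the paper: the paper's proof just observes that $T$ is partitioned into $K_+\ssm K_0$, $K_0$, $K_-\ssm K_0$ (your inclusion $K_+\cap K_-\subseteq K_0$), so the complement of $\Pi_*(K_0)$ in $\int_\tau(Y)$ is a disjoint union of a $K_+$-part and a $K_-$-part, each inessential because $\pi_Y^\tau(K_\pm)=\emptyset$ --- which is your tiling dichotomy run directly rather than by contradiction. The $\partial_\tau Y$-truncation issue you flag at the end is compressed to exactly the same degree in the paper's own proof (``inessential in the above sense''), so your argument matches the paper's both in structure and in level of detail.
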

A punctured spine for $Y$ is a subspace which is a retract
of $Y$ minus a union of disjoint disks. In other words, the conclusion of \Cref{K0 spine} implies that every essential curve in $Y$ is homotopic into $\Pi_*(K_0)$. In particular, $\pi_Y^\tau(K_0)$ is nonempty and so is every $\pi_Z^\tau(K_0)$ for $Z$ a $\tau$-compatible subsurface that overlaps with $Y$.

\begin{proof}
The statement that $\pi_Y^\tau(K_+)$ is empty means that, after
projecting $K_+$ by $\Pi$ into $X$ and intersecting with $\int_\tau(Y)$, we
obtain components which are inessential subcomplexes, meaning they do
not contain any essential curves or proper arcs. (When $Y$ is an annulus, 
this means that no $\tau$-edge from $K_+$ joins opposite sides of the open 
annulus $\int_\tau(Y)$.)

Now $K_+\ssm K_0$ and $K_-\ssm K_0$ are open in $T$ and disjoint, so
their projections to $X$ intersect $\int_\tau(Y)$ in a collection of
disjoint open sets each of which is inessential in the above sense. It
follows that the complement of these open sets, which is
$\Pi(K_0)\intersect \int_\tau(Y)$, intersects every essential curve
and proper arc in $\int_\tau(Y)$. Hence it contains a punctured spine.
\end{proof}




If $T \cap U_Y$ projects to an essential subcomplex of $Y_\tau$, then
the proposition follows since $T$ and $T^Y$ both contain $T\cap U_Y$.

If $T\cap U_Y$ is inessential then, by \Cref{K0 spine}, at least
one of $\pi_Y^\tau(K_\pm)$ is nonempty. Suppose $\pi_Y^\tau(K_+)$ is
nonempty.

Since $K_+ = T \intersect (T \vmax T^+)$, 
we immediately have
\begin{equation*}
  d_Y(T, T \vmax T^+) \le  D.
\end{equation*}
Since $T^+ \le T \vmax T^+$, \Cref{prop:closed_distance}
tells us that
\begin{equation*}
  d_Y(T \vmax T^+,\lambda^+) \le  D.
\end{equation*}
Thus
\begin{equation*}
  d_Y(T,\lambda^+) \le 2 D.
\end{equation*}
Now note that $T^Y\intersect T^+$ 
is equal to the part of $T^+$ lying below $T$, hence its $\Pi$--
projection to $X$ has the same image as $K_+$. It follows that
$\pi_Y^\tau(T^Y\intersect T^+)$ is nonempty, therefore
\begin{equation*}
  d_Y(T^Y,T^+) \le  D. 
\end{equation*}
Since \Cref{prop:closed_distance} again gives us
\begin{equation*}
  d_Y(T^+,\lambda^+) \le  D, 
\end{equation*}
we combine all of these to conclude
\begin{equation*}
  d_Y(T,T^Y) \le 4 D. \qedhere
\end{equation*}
\end{proof}

\begin{remark}\label{rmk:uptop}
Note that the proof of \Cref{prop:stays_close} shows that if $\pi_Y^\tau(K_+)$ is
nonempty, then  $d_Y(T,\lambda^+) \le 2 D$. The corresponding statement also holds if $\pi_Y^\tau(K_-)$ is nonempty.
\end{remark}

\subsection{Finishing the proof}
We assume that $d_Y(\lambda^-,\lambda^+) \ge 10D$.
To complete the argument, we choose a $\phi$-section $T_0$ 
with the property that $3D \le d_Y(T_0,\lambda^+) \le 5D$. Such a section exists by \Cref{lem:AG_sec} and \Cref{prop:closed_distance}. Let $N=N_Y$ be as in (\ref{NY def}).

\begin{lemma} \label{lem:right_place}
With notation as above,
\[
d_Y(\Phi^N (T_0), \lambda^-) \le 2D+11.
\]
\end{lemma}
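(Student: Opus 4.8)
The goal is to bound $d_Y(\Phi^N(T_0),\lambda^-)$, where $N=N_Y$ is the first positive power with $\phi^N(Y)$ overlapping $Y$, and $T_0$ is a $\phi$-section chosen so that $3D\le d_Y(T_0,\lambda^+)\le 5D$. The natural strategy is to first understand $\Phi^N(T_0)$ as (essentially) the image of a section related to $\phi^N(Y)$, transport the bound for $T_0$ across by $\phi^N$, and then use \Cref{lem:order} to pin down $\phi^N(\partial Y)$ near $\lambda^-$.

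First I would record what $\Phi$ does to the constructions attached to $Y$. Since $\Phi$ is a simplicial homeomorphism of $\N$ induced by $\phi$, it carries $\tau$-compatible subsurfaces to $\tau$-compatible subsurfaces, sends $\partial_\tau Y$ to $\partial_\tau(\phi^N Y)$, and hence $\Phi^N(T_0)$ relates to the top/bottom sections $T^\pm(\phi^N Y)$ of $T(\partial_\tau(\phi^N Y))$ exactly as $T_0$ relates to $T^\pm(Y)$. Applying $\Phi^N$ to the inequality $d_Y(T_0,\lambda^+)\le 5D$ and using $\phi$-invariance of $\lambda^\pm$ gives $d_{\phi^N Y}(\Phi^N T_0,\lambda^+)\le 5D$, and since $\phi^N(Y)$ overlaps $Y$ by the choice of $N$, the projection $\pi_Y(\partial(\phi^N Y))$ is non-empty. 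Now I invoke \Cref{lem:order}: provided $d_Y(\lambda^-,\lambda^+)\ge 20$ (which follows from our standing hypothesis $d_Y(\lambda^-,\lambda^+)\ge 10D$ since $D=15$), and since $N\ge 1$, we get $d_Y(\phi^N(\partial Y),\lambda^-)\le 4$. So the point $\pi_Y(\phi^N(\partial Y))$ is a definite distance from $\lambda^-$ in $\A(Y)$; what remains is to control $d_Y(\Phi^N T_0, \phi^N(\partial Y))$ — i.e. to see that $\Phi^N T_0$ projects (in $Y$) near $\partial(\phi^N Y)$.

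For this last step I would combine \Cref{prop:closed_distance}, \Cref{prop:stays_close}, and \Cref{prop:proj_close}, all transported by $\Phi^N$. Write $Z = \phi^N(Y)$; then $\Phi^N T_0$ has $\pi^\tau_Z$-projection within $D+1$ of $\lambda^+$ (or $\lambda^-$) whenever it lies suitably above $T^+(Z)$ or below $T^-(Z)$, and the $Z$-projection operation $T\mapsto T^Z$ changes $Z$-projections by at most $4D$. The key geometric input is \Cref{prop:proj_close}, which says $\pi^\tau_Y(\partial_\tau Z)$ and $\pi_Y(\partial Z)$ lie within $7$ of each other in $\A(Y)$, so it suffices to bound $d_Y(\Phi^N T_0, \partial_\tau Z)$. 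Here one uses that $\Phi^N T_0 = \Phi^N T_0^Z$ up to the $4D$ error from \Cref{prop:stays_close}, and that $(\Phi^N T_0)^Z\in T(\partial_\tau Z)$ contains $\partial_\tau Z$ as a subcomplex, so their $\pi^\tau_Y$-images are within $D$ of each other (both are nearly simple in a common proper graph of $\tau$-edges, with diameter bound \Cref{lem:overlap_tau}(1)). Chaining: $d_Y(\Phi^N T_0,\partial_\tau Z) \le d_Y(\Phi^N T_0,(\Phi^N T_0)^Z) + d_Y((\Phi^N T_0)^Z,\partial_\tau Z) \le 4D + D$, then $d_Y(\Phi^N T_0,\partial Z)\le 5D+7$ by \Cref{prop:proj_close}, and finally $d_Y(\Phi^N T_0,\lambda^-)\le (5D+7)+4$ by the triangle inequality with \Cref{lem:order}. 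Since $5D+11 = 75+11$ exceeds $2D+11 = 41$, this crude chaining is too lossy — so the real work is to do the accounting more carefully.

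\textbf{Main obstacle.} The sharp constant $2D+11$ forces a more economical argument than the naive chain above. I expect the genuine difficulty is to avoid paying both the $4D$ of \Cref{prop:stays_close} and the full $5D$ coming from the $d_Y(T_0,\lambda^+)\le 5D$ choice: the point of choosing $T_0$ with $3D\le d_Y(T_0,\lambda^+)$ (rather than $T_0=T^+$) must be that $\Phi^N T_0$ already lies \emph{below} $T^-(Z)$ (equivalently $\phi^{-N}$ of it lies below $T^-(Y)$ relative to $\phi^N Y$), so that \Cref{prop:closed_distance} applies directly to give $d_Z(\Phi^N T_0,\lambda^+)\le D+1$ with \emph{no} $T\mapsto T^Z$ correction — this is presumably why the lower bound $d_Y(T_0,\lambda^+)\ge 3D$ is imposed: it guarantees, via \Cref{rmk:uptop} applied to $Z$ after translating, that the relevant piece $K_-$ is the essential one. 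Getting this positional claim right (that $\Phi^N T_0$ sits on the correct side of the $Z$-pocket, using that the flow direction and $\Phi$'s translation direction are opposite, plus $\phi^N Y$ overlapping $Y$) is the crux; once it is established, the estimate becomes $d_Y(\Phi^N T_0,\lambda^-) \le D + (D+1) + 7 + \text{(small)}$, and one checks the arithmetic closes at $2D+11$.
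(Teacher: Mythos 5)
Your proposal does not close, and the refinement you sketch at the end rests on a claim that is actually false. You correctly assemble the outer ingredients ($d_Y(\phi^N(\partial Y),\lambda^-)\le 4$ from \Cref{lem:order} and the $\le 7$ from \Cref{prop:proj_close}), concede that your chain gives $5D+11$, and then propose that the purpose of the lower bound $3D\le d_Y(T_0,\lambda^+)$ is to force $\Phi^N T_0$ to lie outside the pocket of $Z=\phi^N(Y)$ (below $T^-(Z)$, or above $T^+(Z)$), so that \Cref{prop:closed_distance} applies with no correction. But $\Phi^N$ is a simplicial automorphism of $\N$ preserving the fiberwise order and carrying $\partial_\tau Y$ to $\partial_\tau(\phi^N Y)$, so $\Phi^N(T^\pm(\partial_\tau Y))=T^\pm(\partial_\tau \phi^N Y)$; hence $\Phi^N T_0\le T^-(Z)$ is equivalent to $T_0\le T^-(\partial_\tau Y)$, which by \Cref{prop:closed_distance} would force $d_Y(T_0,\lambda^-)\le D+1$, contradicting $d_Y(T_0,\lambda^-)\ge 10D-5D=5D$; similarly $\Phi^N T_0\ge T^+(Z)$ would force $d_Y(T_0,\lambda^+)\le D+1<3D$. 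The two-sided choice $3D\le d_Y(T_0,\lambda^+)\le 5D$ is designed precisely to keep $T_0$ away from \emph{both} extremes of $Y$'s pocket, not to push $\Phi^N T_0$ outside $Z$'s pocket. A smaller but real error in your chain: \Cref{prop:stays_close} bounds $d_Z(T,T^Z)$, i.e. the projection to the surface used to form $T^Z$; it does not give $d_Y(\Phi^N T_0,(\Phi^N T_0)^Z)\le 4D$.

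The missing idea is a spine argument carried out for $Y$ \emph{before} applying $\Phi^N$. Since $d_Y(T_0,\lambda^+)\ge 3D>2D$ and $d_Y(T_0,\lambda^-)\ge 5D>2D$, \Cref{rmk:uptop} shows that both $\pi_Y^\tau(K_+)$ and $\pi_Y^\tau(K_-)$ are empty, so by \Cref{K0 spine} the middle piece $T_0\cap U_Y$ contains a punctured spine for $Y$. Applying $\Phi^N$, the complex $\Phi^N(T_0\cap U_Y)$ is a punctured spine for $\phi^N(Y)$, and since $\phi^N(Y)$ overlaps $Y$ its $\pi_Y^\tau$-projection is nonempty. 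Moreover it lies, together with $\Phi^N(\partial_\tau Y)=\partial_\tau(\phi^N Y)$, inside the single triangulation $\Phi^N(\Pi_*(T_0^Y))$, so by \Cref{lem:overlap_tau}(1) its $Y$-projection is within $D$ of $\pi_Y^\tau(\Phi^N(\partial_\tau Y))$. Now chain: $+7$ from \Cref{prop:proj_close}, $+4$ from \Cref{lem:order}, and one more $D$ because $\Phi^N(T_0\cap U_Y)\subset \Phi^N T_0$ and the projection of the section $\Phi^N T_0$ has diameter at most $D$; this yields $D+(D+7+4)=2D+11$. Without some such argument showing that $\Phi^N T_0$ projects near $\partial_\tau(\phi^N Y)$ in $\A(Y)$, the constant cannot be recovered, and your proposed substitute for it fails.
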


\begin{proof}
From \Cref{phi n of boundary Y} we have
  \[
d_Y(\phi^N(\boundary Y), \lambda^-) \le 4
\]
From \Cref{prop:proj_close} we have that since $\phi^N(Y)$ overlaps with $Y$,
\[
d_Y(\partial \phi^N(Y),\pi_Y^\tau(\partial_\tau \phi^N(Y))) \le 7.
\]
Since $\phi^N(\boundary_\tau Y) = \boundary_\tau(\phi^N(Y))$, we have
\[
d_Y(\phi^N(\boundary Y),\pi_Y^\tau(\phi^N(\boundary_\tau Y))) \le 7 .
\]
Let $K_0,K_+,K_-$ be subcomplexes of $T_0$ as in the proof of
\Cref{prop:stays_close}. Since $d_Y(T_0,\lambda^+) \ge 3D$ and $d_Y(T_0,\lambda^-) \ge 10D - 5D$
by choice of $T_0$ and the assumption on $d_Y(\lambda^-,\lambda^+)$, 
the proof of \Cref{prop:stays_close} (see \Cref{rmk:uptop}) tells us that both
$\pi_Y^\tau(K^+)$ and $\pi_Y^\tau(K^-)$ are empty.
Hence by \Cref{K0 spine}, $T_0\intersect U_Y$ must contain a punctured
spine for $Y$.

Now since $\phi^N(Y)$ intersects $Y$ essentially, it must be that
$\pi_Y^\tau(\Phi^N(T_0\intersect U_Y))$ is nonempty.
Since the triangulation of $T_0^Y$ contains both $T_0\intersect U_Y$ and
$\boundary_\tau Y$, 
we have
$$
d_Y(\pi_Y^\tau(\Phi^N(T_0\intersect
U_Y)),\pi_Y^\tau(\Phi^N(\boundary_\tau Y))) \le  D.
$$
Combining these and observing that $T_0\intersect U_Y \subset T_0$ we obtain the desired inequality.
\end{proof}


We now define the \define{isolated pocket} for $Y$ to be 
\begin{equation}\label{isolate}
V = V_Y = \widehat U_Y(\Phi^N(T_0), T_0). 
\end{equation}
\Cref{lem:int_int} implies that the interior of $V$ is equal to $R(T_0)$, and
\Cref{prop:isolated_embed} therefore implies the following corollary.

\begin{corollary}\label{cor:embed}
The covering map $\N \to \cM$ embeds
 $\int(V)$ in $\ring M$.
\end{corollary}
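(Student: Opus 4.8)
The plan is to obtain the corollary as an immediate consequence of the two results just established, with essentially no new work. First I would apply \Cref{lem:int_int} with $T_1 = \Phi^N(T_0)$ and $T_2 = T_0$. Combined with the definition (\ref{isolate}) of the isolated pocket $V$, this gives
\[
\int(V) = \int\!\big(\widehat U_Y(\Phi^N(T_0), T_0)\big) = \int\!\big(U(\Phi^N(T_0), T_0)\big) \cap \int(U_Y),
\]
and by (\ref{R def}) the right-hand side is exactly the region $R(T_0)$. Hence $\int(V) = R(T_0)$.

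Next I would invoke \Cref{prop:isolated_embed}, which states precisely that the restriction of the covering map $\N \to \cM$ to $R(T_0)$ is an embedding. Combining this with the identification $\int(V) = R(T_0)$ from the previous step yields that the covering map embeds $\int(V)$ in $\cM$, which is the assertion of the corollary.

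I do not expect any obstacle here: the substantive content lies entirely in \Cref{prop:isolated_embed}, whose proof already handles disjointness of $R(T_0)$ from its $\Phi$-translates by splitting into the range $i \ge N$ (controlled by the $\phi$-section property $\Phi^i(T_0) \le T_0$ for $i>0$) and the range $0 < i < N$ (controlled by \Cref{lem:overlap_tau}, since $\phi^i(Y)$ and $Y$ then have no essential intersection), and in \Cref{lem:int_int}, whose proof reduces the interior identity to the fiberwise retraction formula (\ref{projection intersection in R}). The corollary is the bookkeeping step that records $R(T_0)$ as the interior of the isolated pocket $V$, so that $V$ can be used as a genuine simplicial pocket — embedded in $\cM$, with a controlled number of tetrahedra — in the remainder of the proof of \Cref{th:bounding_projections_closed}.
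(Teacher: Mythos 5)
Your proposal is correct and matches the paper's argument exactly: the text preceding the corollary deduces it by applying \Cref{lem:int_int} to identify $\int(V)$ with $R(T_0)$ and then citing \Cref{prop:isolated_embed}. Nothing further is needed.
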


Thus we can complete the proof of \Cref{th:bounding_projections_closed} with the following proposition.

\begin{proposition}
The isolated pocket for $Y$ satisfies
\[
|V| \ge \frac{1}{2D}\big |\chi'(Y)| \cdot \big (d_Y(\lambda^-,\lambda^+) - 16D \big).
\]
\end{proposition}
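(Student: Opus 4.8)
The plan is to bound $|V|$ from below by counting tetrahedra along a sequence of tetrahedron moves that sweeps from the bottom of $V$ to its top, using \Cref{lem:slowed_progress} to control how slowly the $Y$-projection advances, and \Cref{lem:right_place} together with the choice of $T_0$ to control the total $Y$-distance that must be traversed. First I would identify the top and bottom sections of the pocket $V = \widehat U_Y(\Phi^N(T_0),T_0)$: by definition these are $(\Phi^N(T_0))^Y$ and $(T_0)^Y$, both of which lie in $T(\partial_\tau Y)$. By \Cref{lem:connectivity} there is a sequence of upward tetrahedron moves through $T(\partial_\tau Y)$ connecting $(\Phi^N(T_0))^Y \vmin (T_0)^Y$ to $(\Phi^N(T_0))^Y \vmax (T_0)^Y$, and the number of such moves is exactly the number of tetrahedra of $U_Y((\Phi^N(T_0))^Y,(T_0)^Y)$; by \Cref{lem:int_int} this region has the same interior as $R(T_0)$, i.e. as $V$. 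So $|V|$ equals (up to the count of tetrahedra meeting $\partial_\tau Y$, which only helps) the number of moves in this sweep.

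Next I would estimate the $Y$-distance between the two ends of the sweep. Using \Cref{prop:stays_close}, $d_Y(T_0,(T_0)^Y) \le 4D$ and $d_Y(\Phi^N(T_0),(\Phi^N(T_0))^Y)\le 4D$. By choice of $T_0$ we have $d_Y(T_0,\lambda^+)\le 5D$, so $d_Y((T_0)^Y,\lambda^+) \le 9D$; by \Cref{lem:right_place}, $d_Y(\Phi^N(T_0),\lambda^-)\le 2D+11$, so $d_Y((\Phi^N(T_0))^Y,\lambda^-)\le 6D+11$. Hence, by the triangle inequality in $\A(Y)$,
\[
d_Y((T_0)^Y,(\Phi^N(T_0))^Y) \;\ge\; d_Y(\lambda^-,\lambda^+) - 9D - (6D+11) \;=\; d_Y(\lambda^-,\lambda^+) - 15D - 11.
\]
Since $D=15$ this is at least $d_Y(\lambda^-,\lambda^+) - 16D$ (as $11 \le D$). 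Now I would break the sweep from $(\Phi^N(T_0))^Y\vmin(T_0)^Y$ to $(\Phi^N(T_0))^Y\vmax(T_0)^Y$ into consecutive blocks, each consisting of at most $|\chi(Y)| = |\chi'(Y)|$ tetrahedron moves through $T(\partial_\tau Y)$. By \Cref{lem:slowed_progress}, the endpoints of each such block are within $2D$ in $d_Y$. If there are $m$ blocks, then by the triangle inequality the total $Y$-distance traversed is at most $2Dm$, so $2Dm \ge d_Y(\lambda^-,\lambda^+)-16D$, giving $m \ge \tfrac{1}{2D}(d_Y(\lambda^-,\lambda^+)-16D)$. Since each block contains at least one tetrahedron move (when its endpoints differ) and each block uses at most $|\chi'(Y)|$ moves, the total number of moves — hence $|V|$ — is at least $m$, and more precisely the total is at least $(m-1)|\chi'(Y)| + 1 \ge$ (after absorbing the $-1$ into the constant, or by noting $m$ blocks of size $|\chi'(Y)|$ would be needed) the claimed bound $\tfrac{1}{2D}|\chi'(Y)|\cdot(d_Y(\lambda^-,\lambda^+)-16D)$.

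I expect the main obstacle to be the bookkeeping in the last step: the inequality $|V| \ge m\cdot|\chi'(Y)|$ is not literally what the block-counting gives, since the last block may be short, so one must be a little careful — either pad the estimate by redefining the constant, or argue that since \Cref{lem:slowed_progress} applies to \emph{any} pair within $|\chi(Y)|$ moves, if fewer than $m|\chi'(Y)|$ moves sufficed we could cover the whole sweep by fewer than $m$ blocks and contradict the distance lower bound. A secondary point requiring care is the annular case, where $|\chi'(Y)|=1$ and $|\chi(Y)|=0$, so \Cref{lem:slowed_progress} must be applied with the understanding that $0$ moves means $T_1=T_2$; here the bound $2D$ on $d_Y$ of consecutive sections is trivially the bound coming from $\diam_Y(\pi^\tau_Y(T))\le D$ applied at each single move, and the counting argument still yields $|V| \ge \tfrac{1}{2D}(d_Y(\lambda^-,\lambda^+)-16D)$ as needed. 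One should also double-check that $V$ is nonempty, i.e. that $R(T_0) \ne \emptyset$, which follows from the distance lower bound forcing $(\Phi^N(T_0))^Y \ne (T_0)^Y$, together with \Cref{lem:int_int}.
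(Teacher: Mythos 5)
Your overall strategy is the paper's: the same three inputs (\Cref{prop:stays_close}, \Cref{lem:right_place}, \Cref{lem:slowed_progress}) are combined in the same way, and your distance estimate $d_Y(\Phi^N(T_0)^Y,T_0^Y)\ge d_Y(\lambda^-,\lambda^+)-16D$ is exactly the paper's. But there is a genuine gap at the step where curve-graph distance is converted into a count of tetrahedra of $V$. You assert that the number of tetrahedron moves in a monotone sweep from $\Phi^N(T_0)^Y\vmin T_0^Y$ to $\Phi^N(T_0)^Y\vmax T_0^Y$ through $T(\partial_\tau Y)$ equals $|U_Y(\Phi^N(T_0)^Y,T_0^Y)|=|V|$ ``up to tetrahedra meeting $\partial_\tau Y$, which only helps.'' In fact the number of moves in such a sweep equals the number of tetrahedra of the \emph{full} region $U(\Phi^N(T_0)^Y,T_0^Y)$, and this region contains, in addition to $V$, all tetrahedra lying over $X\ssm\mathcal{R}_Y$. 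The two sections typically do differ off $\mathcal{R}_Y$: the projection $T\mapsto T^Y$ only pushes $T$ into the (possibly very thick) region between $T^-$ and $T^+$, so away from $Y$ the sections $\Phi^N(T_0)^Y$ and $T_0^Y$ can be far apart, and those tetrahedra are counted by your sweep but are not in $V$. So the discrepancy goes the wrong way: your block argument gives a lower bound on the total number of moves, and since $|V|$ is only bounded \emph{above} by that total, no lower bound on $|V|$ follows.

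The fix is what the paper does: charge all progress in $\A(Y)$ to the moves whose tetrahedra lie over $\mathcal{R}_Y$, i.e.\ to tetrahedra of $U_Y$, which by definition lie in $V$. A move whose tetrahedron projects into the closure of $X\ssm\mathcal{R}_Y$ changes no edge meeting $\int_\tau(Y)$, hence leaves $\pi_Y^\tau$ of the section unchanged; so one groups only the $V$-moves into blocks of at most $|\chi(Y)|$ and applies \Cref{lem:slowed_progress} (whose proof only uses the number of changed edges over $Y$) to conclude that at least $|\chi'(Y)|\cdot d/2D$ moves through tetrahedra of $U_Y$ are needed, which is the desired bound on $|V|$. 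Your two flagged bookkeeping issues are comparatively minor and are handled in the paper at the same level of care: the rounding in the block count is absorbed by the generous constants, and in the annular case the paper notes that every section restricted to $\mathcal{R}_Y$ has at least two edges crossing the annulus, so consecutive sections share a crossing edge and \Cref{lem:overlap_tau}(1) bounds each transition; your appeal to $\diam_Y(\pi_Y^\tau(T))\le D$ alone does not suffice without that shared-edge observation.
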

\begin{proof}
By definition,
\[
V = U_Y(\Phi^N(T_0)^Y, T_0^Y) \subset U_Y.
\]
Moreover, we claim that the sections defining this region satisfy the following:
\begin{align} \label{claim:top/bottom}
d_Y(T_0^Y,\lambda^+) \le 9 D \quad \text{and} \quad d_Y(\Phi^N(T_0)^Y,\lambda^-) \le 7 D.
\end{align}

The first inequality follows directly from the assumption that $d_Y(T_0,\lambda^+) \le 5D$ and \Cref{prop:stays_close}. The second inequality follows from \Cref{lem:right_place} and another application of \Cref{prop:stays_close}. (Here we have used that $11 \le D$.)

So we see that $d_Y(\Phi^N(T_0)^Y, T_0^Y) \ge  d_Y(\lambda^-,\lambda^+) - 16D $. 
Hence, to prove the proposition, it suffices to show that 
\begin{align}\label{eq:pocket_size}
\frac{|V|}{|\chi'(Y)|} \ge \frac{d_Y(\Phi^N(T_0)^Y, T_0^Y)}{2D}.
\end{align}

Set $d = d_Y(\Phi^N(T_0)^Y, T_0^Y)$.
If $Y$ is not an annulus, then \Cref{lem:slowed_progress} implies that at least $|\chi'(Y)| \cdot d/2D$ upward tetrahedron moves through tetrahedra of $U_Y$ are needed to connect $\Phi^N(T_0)^Y$ to $T_0^Y$. 
If $Y$ is an annulus, the same is true since $|\chi'(Y)| =1$ and any triangulation of $Y$ has at least two edges joining opposite boundary components.
As each of these tetrahedra lie in $V$ by definition, this establishes \Cref{eq:pocket_size} and completes the proof.
\qedhere
\end{proof}


\section{The subsurface dichotomy}
\label{sec:dichotomy}

In this section we prove the second of our main theorems:

\begin{theorem}[Subsurface dichotomy] \label{th:closed_sub_dichotomy}
Let $M$ be a hyperbolic fibered $3$-manifold and let $S$ and $F$ be fibers of $M$ which are contained in the same fibered face. If $W \subset F$ is a subsurface of $F$ then either $W$ is 
homotopic through surfaces transverse to the flow
to an \emph{embedded} subsurface $W' \subset S$ of $S$ with 
$$d_{W'} (\lambda^-,\lambda^+) = d_W(\lambda^-,\lambda^+) $$
or the fiber $S$ satisfies
$$9D \cdot |\chi(S)| \ge  d_W(\lambda^-,\lambda^+) -16D.$$
\end{theorem}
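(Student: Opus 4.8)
The theorem is a dichotomy comparing a subsurface $W\subset F$ of one fiber against a different fiber $S$ in the same fibered cone. The tool connecting different fibers in a fibered cone is the associated pseudo-Anosov flow $\psi$: every fiber in $\R_+\FF$ is isotopic to one transverse to $\psi$, and passing to the fully-punctured manifold $\cM$ (removing singular orbits) both $F$ and $S$ lift to fully-punctured fibers $\cF,\cS$ of $\cM$ sharing the same veering triangulation $\tau$. So the first move is: if $d_W(\lambda^-,\lambda^+)$ is below the relevant threshold (say $< 16D$, or whatever is needed to apply $\tau$-compatibility from \Cref{thm: tau-compatible}), the second alternative holds vacuously; otherwise $W$ is $\tau$-compatible inside $(F,q_F)$ and has a $\tau$-boundary $\partial_\tau W$, which we view as a collection of disjoint $\tau$-edges in $\cF$, hence in $\cM$.

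First I would transport $W$ to $S$ via the flow. The flow $\psi$ carries the fiber surface $F$ across to $S$, and a subsurface of $F$ transverse to $\psi$ flows to an immersed (a priori) surface transverse to $\psi$ sitting in $S$ (up to homotopy through surfaces transverse to the flow); equivalently, $\partial_\tau W$, being a union of $\tau$-edges in $\cM$, determines a set of arcs/curves in $\cS$ after intersecting with the section corresponding to $\cS$. This produces a candidate immersed subsurface $W\to S$ whose boundary traces edges of $\tau$. Now I would invoke \Cref{th:intro_3} (\Cref{thm:immersion bound}): this immersion $W\to S$ either factors through a finite cover $W\to W'$ of a genuine subsurface $W'\subset S$, or the whole projection of $\A(S)$ into $\A(W)$ has diameter $\le M\le 38$. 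But since the laminations $\lambda^\pm$ are carried by each fiber and $d_W(\lambda^-,\lambda^+)$ is large, the projections $\pi_W(\lambda^\pm)$ are far apart in $\A(W)$; if the immersion did not factor through an embedded subsurface, we'd get a bound on $d_W(\lambda^-,\lambda^+)$ — but we want a bound involving $|\chi(S)|$, not just the absolute constant $38$, so this needs care.

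The key point is that when $W\to S$ is genuinely immersed (not covering a subsurface), \Cref{prop:closed_distance} and the pocket construction from \Cref{sec:uniform_bounds} let us produce a subcomplex of the veering triangulation whose depth is $\approx d_W(\lambda^-,\lambda^+)$ and whose intersection with the section $\cS$ has width $\lesssim |\chi(S)|$: because $W$ is not homotopic into $S$, the flow-image of $W$ wraps around and meets $\cS$ in a triangulation-compatible graph with $O(|\chi(S)|)$ edges, and \Cref{lem:slowed_progress}-type reasoning (applied to $S$ rather than $W$) shows that sweeping the pocket for $W$ past the section of $\cS$ can move the $\A$-projection only $\le 2D$ per $|\chi(S)|$ worth of tetrahedra. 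Concretely: the pocket $U_W$ (or its isolated version) between $T^-$ and $T^+$ has $d_W$-depth $\ge d_W(\lambda^-,\lambda^+)-O(D)$; restricting to the slab between consecutive $\Phi$-translates or between sections bracketing $\cS$, each such slab's section meets $\cS$ in $\le 2|\chi(S)|+1$ edges (Gauss-Bonnet on $\cS$, as in \Cref{cor:diam_bound}), so its $\A(W)$-diameter is $\le D$; if $W$ does not embed into $S$, the section of $\cS$ must cut across $U_W$ (this is exactly the contrapositive of "$W$ homotopic into $S$"), and stacking: the total $d_W$-depth is at most the number of such slabs times $D$, while the number of slabs is controlled by $|\chi(S)|$ times a constant. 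This yields $d_W(\lambda^-,\lambda^+) \le 9D\cdot|\chi(S)| + 16D$.

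\textbf{Main obstacle.} The hard part is the middle step: making precise the statement "if $W$ is not homotopic into $S$ through flow-transverse surfaces, then the section $\cS$ intersects the pocket $U_W$ in an essential, and in fact deep, way", and then counting that intersection in terms of $|\chi(S)|$ rather than $|\chi(W)|$. This requires the asymmetry between the two fibers — one uses $\tau$-compatibility and the pocket for $W$, but the complexity bound comes from Gauss-Bonnet applied to $\cS$ (the number of singularities of $q$ inside $\int_\tau$ of a subsurface of $\cS$ is $\le 2|\chi(S)|$), mirroring \Cref{lem:slowed_progress} but with the roles of "which surface hosts the triangulation" and "which surface hosts the projection target" swapped. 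One must also verify that the flow-homotopy from $W$ to an embedded $W'\subset S$, when it exists, preserves $d_{W'}(\lambda^-,\lambda^+) = d_W(\lambda^-,\lambda^+)$ exactly — this follows because both equal the subsurface projection distance computed in $\cM$ via the shared flow-invariant laminations $\Lambda^\pm$, restricted to the corresponding cover, and the flow-homotopy induces an isomorphism of the relevant annular/curve-graph data. I would organize the writeup so that the vacuous case and the embedded case are dispatched quickly, leaving the bulk for the counting argument in the immersed case.
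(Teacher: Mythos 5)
Your overall skeleton matches the paper's (dispatch small $d_W$; when $\pi_1(W)$ lies in $\pi_1(S)$, flow-project and apply \Cref{thm:immersion bound lambda}; otherwise count intersections of a pocket for $W$ with the triangulated fiber $\cS$), but the step carrying the real content of the theorem has a genuine gap. Your counting mechanism --- ``slabs between consecutive $\Phi$-translates or between sections bracketing $\cS$,'' with ``the number of slabs controlled by $|\chi(S)|$'' --- is not justified and is not the right bookkeeping. The preimage of $\cS$ in $\N_F$ consists of infinitely many lifts, and nothing bounds how many of them meet the pocket by $|\chi(S)|$; moreover the $\A(W)$-diameter bound of $D$ comes from Gauss--Bonnet applied to $W$ (at most $2|\chi(W)|+1$ vertices, as in \Cref{cor:diam_bound}), not from Gauss--Bonnet on $\cS$ as you assert. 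What the paper actually does is sweep the \emph{isolated} pocket $V_W$ by a sequence of sections $T_i\in T(\partial_\tau W)$ and track, for each $i$, the pullback $p_i=h_i^{-1}(h_{\mathring S}(\mathring S))$ inside $W_\tau$; the number of transitions $p_i\to p_{i+1}$ is bounded by the number of edges of the triangulated $\cS$, namely $9|\chi(S)|$, precisely because $\int(V_W)$ embeds in $\cM$ (\Cref{cor:embed}) so the top edges of the swept tetrahedra are distinct edges lying on $h_{\mathring S}(\mathring S)$. So $|\chi(S)|$ enters as the number of steps, while $D$ per step comes from $W$; your sketch has these roles tangled, and the bound you want does not follow from it.

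The second missing ingredient is the analogue of \Cref{lem:essential intersection}: you need that at \emph{every} stage of the sweep the intersection locus is an essential proper graph in $W$, so that it determines a nonempty subset of $\A(W)$ of diameter at most $D$ and the stepwise distance count makes sense. Your phrase ``the section of $\cS$ must cut across $U_W$'' is much weaker than this; the actual proof fills the singular punctures back in (passing from $h_{\mathring W,T}$ to $h_{W,T}\colon W\to M$, which is exactly the fully-punctured-to-general difficulty the paper highlights) and argues cohomologically: an inessential intersection lets one homotope $W$ off $h_S(S)$, forcing the dual class of $S$ to vanish on $\pi_1(W)$, i.e.\ $\pi_1(W)\le\pi_1(S)$, contradicting the case hypothesis. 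Two smaller points: your opening move of ``flowing $W$ into $S$'' is only available when $\pi_1(W)\le\pi_1(S)$ (one projects along flow lines in the cyclic cover $N_S\cong S\times\R$, to which $W$ lifts only in that case), so it cannot be the uniform first step; and in the factoring case \Cref{thm:immersion bound lambda} only gives a finite cover $W\to W'$, so you still need the algebraic argument (the finite group $\pi_1(W')/\pi_1(W)$ maps to $\pi_1(M)/\pi_1(F)\cong\Z$, hence trivially) to conclude the cover has degree one, which is what yields an embedded $W'$ and the equality $d_{W'}(\lambda^-,\lambda^+)=d_W(\lambda^-,\lambda^+)$.
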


\subsection*{Punctures and blowups.} 
Fix a fibered face $\FF$ of $M$ and denote the corresponding 
veering triangulation of $\cM$ by $\tau$.
Starting with a fiber $F$ of $M$ in the face $\FF$
let $\mathring F$ be the
fully punctured fiber, that is $\mathring F = F \ssm \sing(q)$. Also 
let $\N_F$ be the infinite cyclic cover of $\cM$ corresponding to the 
fiber $\cF$ together with its veering triangulation (the preimage of $\tau$), as in \Cref{sec:sections}.


For any section $T$ of $\N_F$, let 
 $h_{\mathring F, T} \colon \mathring F \to \mathring M$ be the
simplicial map obtained by composing the section with the covering
map $\N_F \to \cM$.
We want to describe a natural way
to obtain a map $h_{F, T} \colon F\to M$ by filling in punctures:

Let $\check{F}$ be the partial compactification of $\mathring F$ to a surface
with boundary obtained by adjoining the links of ideal vertices in
$\sing(q) \ssm\PP$, in the
simplicial structure on $\mathring F$ induced from $T$. 
In other words we add a circle to each puncture. Similarly
let $\check M$ be the manifold with torus boundaries obtained by
adding links for the ideal vertices of $\tau$ associated to singular
orbits of $\sing(q)\ssm \PP$. Then $h_{\mathring F, T}$ extends
continuously to a proper map $h_{\check F, T} \colon \check F \to \check M$.

We obtain (a copy of) $M$ from $\check M$ by adjoining solid tori to
the boundary components, and a copy of $F$ from $\check F$ by adjoining disks.
Now by construction (since $\mathring F$ comes from puncturing the fiber
$F$ of $M$) the boundary components of $\check F$ map to meridians of the
tori. It follows that $h_{\check F, T}$ can be extended to a map
$h_{F, T} \colon F\to M$ which maps the disks into the solid tori.

Now assume that $W$ is a $\tau$-compatible subsurface of $F$. When $T \in T(\partial_\tau W)$, we can restrict this construction to $W$ as follows: 
First, let $\mathring W_\tau$ be subsurface obtained from $W_\tau \subset F_W$ by puncturing along all singularities. Then the covering $F_W \to F$ restricts to a map $\mathring W_\tau \to \mathring F$ which sends
 the interior of $\mathring W_\tau$ homeomorphically 
 onto  $\int_\tau(W) \ssm \sing(q)$. Composing this map with $h_{\mathring F, T}$, we obtain $h_{\mathring W, T}$. (Note that since $T \in T(\partial_\tau W)$, $T$ naturally induces an ideal triangulation of $\mathring W_\tau$.)
Restricting the above construction to $\mathring W_\tau$, we obtain $\check W$, $h_{\check W, T}$ and $h_{W, T}$. This is done in such a way that each ideal point of $\partial W_\tau$ is replaced with an arc when forming $\check W$ and a ``half-disk'' when forming $W$.


If $S$ is another fiber in the same face, together with a given section of $\N_S$, we define $h_{\mathring S}$,
$h_{\check S}$ and $h_{S}$ in the same way. (Since we will not vary the section of $\N_S$, we do not include it in the notation.)

\subsection*{Intersection locus}
Now consider the locus $h_{\mathring W, T}^{-1}(h_{\mathring
  S}(\mathring S))$, which is a $\tau$-simplicial subcomplex of
$\mathring W_\tau$.
Its completion in $W_\tau \subset F_W$ (with respect to the underlying $q$-metric on $F$) is obtained by adjoining points of $\sing(q) \ssm
\PP$, and we say that this completion is {\em inessential} 
if each of its components can be deformed to a point or to a boundary (or
puncture) of $W$.
We say it is essential if it is not inessential. 
In other words, the completion is
essential if its $1$-skeleton is an essential proper graph of $W_\tau$.

\begin{lemma}[Essential intersection] \label{lem:essential intersection}
Let $F$ and $S$ be fibers in the same face $\FF$ of $M$.
Suppose $W \subset F$ is $\tau$-compatible subsurface of $F$ 
such that $\pi_1(W)$ is not contained in $\pi_1(S)$.
Then, for any section $T$ of $\N_F$ in $T(\partial_\tau W)$,
the completion of
$h_{\mathring W, T}^{-1}(h_{\mathring
  S}(\mathring S))$ in $W_\tau$
is essential.
\end{lemma}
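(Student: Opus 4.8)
The plan is to prove the contrapositive: if the completion $C$ of $h_{\mathring W, T}^{-1}(h_{\mathring S}(\mathring S))$ in $W_\tau$ is \emph{inessential}, then $\pi_1(W)$ is conjugate into $\pi_1(S)$ inside $\pi_1(M)$ (which is exactly the negation of the hypothesis). So assume $C$ is inessential, i.e. every component is null-homotopic in $W_\tau$ or homotopic into $\partial W_\tau$ together with the punctures.

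\emph{Step 1: a $\pi_1$-full complementary region.} First I would invoke the following standard surface fact: if $C$ is a proper graph in a surface $\Sigma$ with $\chi(\Sigma)\le -1$, each component of which is null-homotopic or homotopic into $\partial\Sigma$ together with the punctures, then some component $U_0$ of $\Sigma\ssm C$ has $\pi_1(U_0)\to\pi_1(\Sigma)$ surjective up to conjugacy. (Take a regular neighborhood $N$ of $C$, cap the null-homotopic boundary circles of $N$ with the disks they bound, and observe that the resulting enlarged region is a disjoint union of disks and collars of boundary components/punctures; its complement contains a spine of $\Sigma$ and lies in a single component of $\Sigma\ssm C$.) Applying this with $\Sigma=W_\tau$, I obtain $U_0\subset W_\tau\ssm C$, hence disjoint from $h_{\mathring W, T}^{-1}(h_{\mathring S}(\mathring S))$. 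Set $\mathring U_0=U_0\ssm\sing(q)\subset\mathring W_\tau$; it is connected, $\pi_1(\mathring U_0)\to\pi_1(W_\tau)\cong\pi_1(W)$ is still onto up to conjugacy, and $h_{\mathring W, T}(\mathring U_0)$ is disjoint from $h_{\mathring S}(\mathring S)$.

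\emph{Step 2: an intersection-number computation.} Let $\omega_S\in H^1(\cM;\Z)$ be the class of the fibration of $\cM$ with fiber $\mathring S$, so that $\ker(\omega_S\colon\pi_1(\cM)\to\Z)$ is the fiber subgroup, which equals $(h_{\mathring S})_*\pi_1(\mathring S)$. Since $h_{\mathring S}$ is a section of $\Pi\colon\N_S\to\mathring S$ followed by the covering $\N_S\to\cM$, and all sections of $\Pi$ are properly homotopic in $\N_S\cong\mathring S\times\R$ (and the genuine fiber lifts to $\N_S$ as such a section), $h_{\mathring S}$ is properly homotopic to the fiber inclusion; hence $(h_{\mathring S})_*[\mathring S]=\mathrm{PD}(\omega_S)$ in $H_2(\cM,\partial\cM;\Z)$. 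Now any loop $\gamma$ in $\mathring U_0$ maps under $h_{\mathring W, T}$ to a loop disjoint from the support $h_{\mathring S}(\mathring S)$ of this cycle, so its homological intersection with $\mathrm{PD}(\omega_S)$ vanishes, i.e. $\omega_S\big((h_{\mathring W, T})_*[\gamma]\big)=0$. As $\gamma$ was arbitrary, $(h_{\mathring W, T})_*\pi_1(\mathring U_0)\subset\ker\omega_S=(h_{\mathring S})_*\pi_1(\mathring S)$; equivalently, $h_{\mathring W,T}|_{\mathring U_0}$ lifts to $\N_S$.

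\emph{Step 3: descending to $M$.} Let $\rho\colon\pi_1(\cM)\to\pi_1(M)$ be induced by refilling the singular orbits. Under $\rho$ the fiber subgroup $(h_{\mathring S})_*\pi_1(\mathring S)$ maps onto $\pi_1(S)$, while $\rho\circ(h_{\mathring W, T})_*$ on $\pi_1(\mathring U_0)$ agrees, up to an inner automorphism, with the map induced by $\mathring U_0\hookrightarrow\mathring W_\tau\to\mathring F\hookrightarrow F\hookrightarrow M$, whose image is a conjugate of $\pi_1(W)$ (loops around $\sing(q)$ die in $\pi_1(M)$, and $\pi_1(\mathring U_0)\to\pi_1(W)$ is onto up to conjugacy by Step 1). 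Combining with Step 2, a conjugate of $\pi_1(W)$ is contained in $\pi_1(S)$, which is the contrapositive statement. The hardest part will be Step 1 — making precise that an inessential $C$ leaves a $\pi_1$-full complementary region, carefully handling nested null-homotopic components and peripheral components, and keeping track of punctures when passing between $W$, $W_\tau$ and $\mathring W_\tau$; Step 2 is essentially formal once one knows $h_{\mathring S}(\mathring S)$ is Poincar\'e dual to $\omega_S$, and Step 3 only requires remembering that ``$\pi_1(W)\subset\pi_1(S)$'' is meant up to conjugacy and that $\pi_1(\cM)\twoheadrightarrow\pi_1(M)$ kills the meridians.
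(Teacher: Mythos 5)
Your proposal is correct, and its engine is the same as the paper's: assume the intersection locus is inessential, push (the image of) $\pi_1(W)$ off the image of the $S$--fiber, and conclude that the cohomology class dual to $S$ vanishes on $\pi_1(W)$, i.e.\ $\pi_1(W)\le\pi_1(S)$ (conjugation is immaterial since $\pi_1(S)$ is normal in $\pi_1(M)$). The difference is organizational. The paper works downstairs from the start: it uses the filled-in maps $h_{W,T}\colon W\to M$ and $h_S\colon S\to M$ (this is exactly why the blow-up/filling construction of $h_{F,T}$ is set up beforehand), shows $h_{W,T}^{-1}(h_S(S))$ is essential by homotoping $h_{W,T}$ off $h_S(S)$ and evaluating $\eta\in H^1(M)$, and then transfers essentiality to the completion of the punctured preimage via the observation that components of the completion are obtained from components of $h_{W,T}^{-1}(h_S(S))$ by collapsing the capping disks back to singularities. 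You instead never fill in: you extract a complementary component $U_0$ of the completion carrying $\pi_1(W_\tau)$ up to conjugacy, run the intersection argument upstairs in $\cM$ against $\omega_S=\eta|_{\cM}$ using that $h_{\mathring S}$ is properly homotopic to the fiber inclusion, and only then descend along $\pi_1(\cM)\twoheadrightarrow\pi_1(M)$. What your route buys is that you avoid the comparison step between the filled and punctured preimages (and the maps $h_{W,T}$, $h_S$ altogether); what it costs is your Step 1, the ``$\pi_1$-full complementary region'' fact, which must be proved with some care (nested capping disks, peripheral components, passing from $U_0$ to $\mathring U_0$) --- though this is exactly parallel in difficulty to the paper's own unelaborated step ``precompose with an isotopy of $W$ into itself which lands in the complement,'' so it is a genuine but standard obligation rather than a gap. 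One small point to make explicit when writing it up: the composite $\mathring U_0\hookrightarrow\mathring W_\tau\to F\to M$ extends over the singular punctures of $U_0$ (the covering $F_W\to F$ is defined there), which is what lets you replace $\pi_1(\mathring U_0)$ by $\pi_1(U_0)$ and conclude that its image in $\pi_1(M)$ is a full conjugate of $\pi_1(W)$.
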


\begin{proof}
After obtaining the blowups and maps, as above, we first claim that 
$$
h_{W, T}^{-1}(h_S( S))
$$ 
is essential. The argument for this is similar to \cite[Lemma 2.9]{veering1}, although 
there it was assumed that $h_W = h_{W,T}$ embeds $W$ into $M$. In details,
if the preimage was not essential
then each component is
homotopic into a disk, or homotopic into the ends of $W$. It follows that
$h_W$ is homotopic to a map $h'_W$ whose image misses $h_S(S)$
entirely -- just precompose with an isotopy of $W$ into itself which
lands in the complement of  $h_W^{-1}(h_S( S))$.

But if $h'_W$ misses $h_S(S)$ we can conclude that $\pi_1(W)$ is in
$\pi_1(S)$: letting $\eta$ denote the cohomology class dual to $S$ in $M$,
that fact $h'_W$ misses $h_S(S)$ implies that $\eta$ vanishes on $\pi_1(W)$. 
Hence, if $h_W^{-1}(h_S( S))$ is inessential, then $\pi_1(W) \le \pi_1(S)$.

Now note that 
$h_W^{-1}(h_S(S))$ is contained a small neighborhood of
the completion of $h_{\mathring W}^{-1}(h_{\mathring S}(\mathring S))$.
Indeed, each component of the completion of $h_{\mathring W}^{-1}(h_{\mathring S}(\mathring S))$ can be obtained from a component of $h_W^{-1}(h_S(S))$ by collapsing the adjoined disks back to singularities.
We conclude that the completion of $h_{\mathring W}^{-1}(h_{\mathring S}(\mathring S))$ contains an essential component as well. This is
what we wanted to prove. 
\end{proof}

\subsection*{Sections of $V_W$}
Assume that $d_W(\lambda^-,\lambda^+) \ge 10D$. Then $W$ has an
isolated pocket 
$V_W = U_W(\phi^N(T)^W, T^W) \subset \N_F$, 
 as in \cref{isolate}. By \Cref{cor:embed}, 
the restriction $\int(V_W) \to \cM$ 
of the covering $\N_F \to \cM$
is an embedding.
Now fix a sequence 
\[
\phi^N(T)^W = T_0, T_1, \ldots, T_n, T_{n+1} = T^W
\]
of sections of $T(\partial_\tau W)$ such that $T_i \to T_{i+1}$ is a tetrahedron move
in $U_W$ for $i<n$, and $\Pi_*(T_n)$ and $\Pi_*(T_{n+1})$ restrict to the same triangulation
of $\mathring W_\tau$ (note that $T_n\to T_{n+1}$ is not a tetrahedron move, because the
triangulations can be different outside $W_\tau$). 
That such a sequence exists follows from \Cref{lem:connectivity}.

Using these sections, we define the subcomplex $W_i = T_i \cap U_W$ of $T_i$.
By construction, the tetrahedron between $T_i$ and $T_{i+1}$
lies between $W_i$ and $W_{i+1}$ and
so is contained in $\int (V_W)$.

Denote the map $h_{\mathring W, T_i}$ associated to the 
section $T_i$ by $h_i \colon \mathring W_\tau \to M$.

\vspace{5mm}

With this setup, we can complete the proof of \Cref{th:closed_sub_dichotomy}.

\begin{proof}[Proof of \Cref{th:closed_sub_dichotomy}]
We may assume that $d_W(\lambda^+,\lambda^+) \ge 10D$.

First, suppose that $\pi_1(W)$ is not contained in $\pi_1(S)$.
Then by \Cref{lem:essential intersection} the subcomplex $
h_{i}^{-1}(h_{\mathring  S}(\mathring S))$ of $W_i$ has
essential completion in $W_\tau$
for each $0\le i \le n$.
Denote the $1$-skeleton of the completion of $
h_{i}^{-1}(h_{\mathring  S}(\mathring S))$ by $p_i$
and note that its $\pi_W^\tau$--projection is a 
nontrivial subset of $\A(W)$ 
whose diameter
is bounded by $D$. 
Each transition from  $p_i$ to $p_{i+1}$
corresponds to a tetrahedron move, where $p_i$ contains the bottom
edge and $p_{i+1}$ the top edge. Because the tetrahedra are in $V_W$,
which embeds in $\mathring M$, their top edges in $\cM$ are all distinct (any
edge is the top edge of a unique tetrahedron). Since all these edges
are in the image of $h_{\mathring{S}}(\mathring{S})$
we find that their number is
bounded above by $9|\chi(S)|$. Hence, $n \le 9|\chi(S)|$.


Since $p_0$ and $p_n$ are in the triangulations associated to  the
bottom and top of $V_W$, respectively, we have (\Cref{claim:top/bottom})
\[
d_W(p_0,p_n) \ge d_W(\lambda^-,\lambda^+) - 16D.
\]
For each transition $p_i \to p_{i+1}$ observe that 
$\diam_W(p_i\union p_{i+1}) \le D$ since the proper graph $p_i \cup p_{i+1}$ of  $\int_\tau(W)$ has at most $2|\chi(W)|+1$ vertices (\Cref{cor:diam_bound}).
Combining these facts we obtain
\[
9|\chi(S)| \ge (d_W(\lambda^-,\lambda^+)-  16D)/D
\]
which completes the proof in this case.

Otherwise $\pi_1(W) \le \pi_1(S)$, and we finish the proof as in \cite[Theorem 1.2]{veering1} using \Cref{thm:immersion bound lambda} in place of the special case obtained there. 
First, recall that by \cite[Lemma 2.8]{veering1}, the quantity $d_W(\lambda^-,\lambda^+)$
depends only on the conjugacy class $\pi_1(W) \le \pi_1(M)$ and the fibered face $\FF$. If
we lift $W$ to the $S$--cover of $M$, we see that projecting along flow lines gives an
immersion $W \to S$ that induces the inclusion $\pi_1(W) \le \pi_1(S)$ up to
conjugation. Since  $d_W(\lambda^-,\lambda^+) \ge 10D \ge 37$,  \Cref{thm:immersion bound
  lambda} implies that the map $W \to S$ factors up to homotopy through a finite cover $W \to W'$ for a subsurface $W'$ of $S$.
Since $\pi_1(W) < \pi_1(F)$, the map $\pi_1(W') \to \pi_1(M)/\pi_1(F)$ factors through
$\pi_1(W')/\pi_1(W)$ which is finite. Since $\pi_1(M)/\pi_1(F)=\Z$, this implies $\pi_1(W')$
maps to the identity in $\pi_1(M)/\pi_1(F)$ so $\pi_1(W')<\pi_1(F)$. 
But this implies that $\pi_1(W) = \pi_1(W')$. Hence, the cover $W \to W'$ has degree $1$ and so
$d_W(\lambda^-,\lambda^+) = d_{W'}(\lambda^-,\lambda^+)$. This completes the proof.
\end{proof}


\bibliography{pfnew.bib}

\def\cprime{$'$} \def\cprime{$'$}
\providecommand{\bysame}{\leavevmode\hbox to3em{\hrulefill}\thinspace}
\providecommand{\MR}{\relax\ifhmode\unskip\space\fi MR }
\providecommand{\MRhref}[2]{%
  \href{http://www.ams.org/mathscinet-getitem?mr=#1}{#2}
}
\providecommand{\href}[2]{#2}
\begin{thebibliography}{BKMM12}

\bibitem[AGM13]{AgolVHC}
Ian Agol, Daniel Groves, and Jason Manning, \emph{The virtual {H}aken
  conjecture}, Documenta Mathematica \textbf{18} (2013), 1045--1087.

\bibitem[Ago11]{agol2011ideal}
Ian Agol, \emph{Ideal triangulations of pseudo-{A}nosov mapping tori}, Topology
  and geometry in dimension three \textbf{560} (2011), 1--17.

\bibitem[Ago12]{agol-overflow}
\bysame, \emph{Comparing layered triangulations of 3-manifolds which fiber over
  the circle}, MathOverflow discussion,
  {http://mathoverflow.net/questions/106426}, 2012.

\bibitem[Aou13]{Aougab1}
Tarik Aougab, \emph{Uniform hyperbolicity of the graphs of curves}, Geometry \&
  Topology \textbf{17} (2013), no.~5, 2855--2875.

\bibitem[BBF15]{BBFquasi}
Mladen Bestvina, Ken Bromberg, and Koji Fujiwara, \emph{Constructing group
  actions on quasi-trees and applications to mapping class groups},
  Publications math{\'e}matiques de l'IH{\'E}S \textbf{122} (2015), no.~1,
  1--64.

\bibitem[BCM12]{ELC2}
J.~Brock, R.~Canary, and Y.~Minsky, \emph{The classification of {K}leinian
  surface groups, {II}: {T}he ending lamination conjecture}, Ann. of Math.
  \textbf{176} (2012), no.~1, 1--149. \MR{2925381}

\bibitem[BF14]{bestvina2014subfactor}
Mladen Bestvina and Mark Feighn, \emph{Subfactor projections}, J. Topol.
  \textbf{7} (2014), no.~3, 771--804.

\bibitem[BHS17]{behrstock2017hierarchically}
Jason Behrstock, Mark Hagen, and Alessandro Sisto, \emph{Hierarchically
  hyperbolic spaces, i: Curve complexes for cubical groups}, Geom. Topol.
  \textbf{21} (2017), no.~3, 1731--1804.

\bibitem[BKMM12]{BKMM}
Jason Behrstock, Bruce Kleiner, Yair Minsky, and Lee Mosher, \emph{Geometry and
  rigidity of mapping class groups}, Geom. Topol. \textbf{16} (2012), 781--888.

\bibitem[Bow12]{uniformBo}
Brian Bowditch, \emph{Uniform hyperbolicity of the curve graphs}, preprint
  (2012).

\bibitem[CLM12]{CLM}
Matt~T. Clay, Christopher~J. Leininger, and Johanna Mangahas, \emph{The
  geometry of right-angled {A}rtin subgroups of mapping class groups}, Groups
  Geom. Dyn. \textbf{6} (2012), no.~2, 249--278.

\bibitem[Fri79]{fried1979fibrations}
David Fried, \emph{Fibrations over {S}1 with pseudo-{A}nosov monodromy},
  Travaux de {T}hurston sur les surfaces \textbf{66} (1979), 251--266.

\bibitem[Fri82]{fried1982geometry}
\bysame, \emph{The geometry of cross sections to flows}, Topology \textbf{21}
  (1982), no.~4, 353--371.

\bibitem[Gu{\'e}16]{gueritaud}
Fran{\c{c}}ois Gu{\'e}ritaud, \emph{Veering triangulations and the
  {C}annon-{T}hurston map}, J. Topol. \textbf{3} (2016), no.~2, 957--983.

\bibitem[Hem01]{He}
John Hempel, \emph{3-manifolds as viewed from the curve complex}, Topology
  \textbf{40} (2001), no.~3, 631--657.

\bibitem[McM00]{mcmullen2000polynomial}
Curtis~T McMullen, \emph{Polynomial invariants for fibered 3-manifolds and
  {T}eichm{\"u}ller geodesics for foliations}, Annales scientifiques de l'Ecole
  normale sup{\'e}rieure \textbf{33} (2000), no.~4, 519--560.

\bibitem[Min10]{ECL1}
Yair Minsky, \emph{The classification of {K}leinian surface groups, {I}:
  {M}odels and bounds}, Ann. of Math. (2010), 1--107.

\bibitem[MM00]{MM2}
Howard~A. Masur and Yair~N. Minsky, \emph{Geometry of the complex of curves.
  {II}. {H}ierarchical structure}, Geom. Funct. Anal. \textbf{10} (2000),
  no.~4, 902--974.

\bibitem[MS13]{masur2013geometry}
Howard Masur and Saul Schleimer, \emph{The geometry of the disk complex},
  Journal of the American Mathematical Society \textbf{26} (2013), no.~1,
  1--62.

\bibitem[MT17]{veering1}
Yair~N Minsky and Samuel~J Taylor, \emph{Fibered faces, veering triangulations,
  and the arc complex}, Geom. Funct. Anal. \textbf{27} (2017), no.~6,
  1450--1496.

\bibitem[Raf05]{rafi2005characterization}
Kasra Rafi, \emph{A characterization of short curves of a {T}eichm{\"u}ller
  geodesic}, Geom. Topol. \textbf{9} (2005), no.~1, 179--202.

\bibitem[Raf07]{Rafi}
\bysame, \emph{A combinatorial model for the {T}eichm{\"u}ller metric}, Geom.
  Funct. Anal. \textbf{17} (2007), no.~3, 936--959.

\bibitem[RS09]{rafi2009covers}
Kasra Rafi and Saul Schleimer, \emph{Covers and the curve complex}, Geometry \&
  Topology \textbf{13} (2009), no.~4, 2141--2162.

\bibitem[Sch06]{schleimer2006notes}
Saul Schleimer, \emph{Notes on the complex of curves}, unpublished notes
  (2006).

\bibitem[ST19]{sisto2019largest}
Alessandro Sisto and Samuel~J Taylor, \emph{Largest projections for random
  walks and shortest curves in random mapping tori}, Mathematical Research
  Letters \textbf{26} (2019), no.~1, 293--321.

\bibitem[Thu86]{thurston1986norm}
William~P Thurston, \emph{A norm for the homology of 3-manifolds}, Mem. Amer.
  Math. Soc. \textbf{59} (1986), no.~339, 99--130.

\end{thebibliography}
\bibliographystyle{amsalpha}

\end{document}